\makeatletter \@addtoreset{equation}{section} \makeatother
\renewcommand\thefigure{\thesection.\@arabic\c@figure}
\renewcommand\thetable{\thesection.\@arabic\c@table}
\newtheorem{theorem}{Theorem}[section]
\newtheorem{lemma}[theorem]{Lemma}
\newtheorem{proposition}[theorem]{Proposition}
\newtheorem{corollary}[theorem]{Corollary}
\newtheorem{definition}[theorem]{Definition}
\newcommand{\mc}[1]{{\mathcal #1}}
\newcommand{\mf}[1]{{\mathfrak #1}}
\newcommand{\mb}[1]{{\mathbf #1}}
\newcommand{\bb}[1]{{\mathbb #1}}
\newcommand{\<}{\langle}
\renewcommand{\>}{\rangle}
\renewcommand{\ll}{\langle\!\langle\,}
\renewcommand{\gg}{\,\rangle\!\rangle}
\renewcommand{\Cap}{{\rm cap}}
\begin{document}

\title{Quenched scaling limits of trap models}

\author{M. Jara, C. Landim, A. Teixeira}

\begin{abstract}
  Fix a strictly positive measure $W$ on the $d$-dimensional torus
  $\bb T^d$. For an integer $N\ge 1$, denote by $W^N_x$, $x=(x_1,
  \dots, x_d)$, $0\le x_i <N$, the $W$-measure of the cube $[x/N,
  (x+\mb 1)/N)$, where $\mb 1$ is the vector with all components equal
  to $1$. In dimension $1$, we prove that the hydrodynamic behavior of
  a superposition of independent random walks, in which a particle
  jumps from $x/N$ to one of its neighbors at rate $(N W^N_x)^{-1}$,
  is described in the diffusive scaling by the linear differential
  equation $\partial _t \rho = (d/dW)(d/dx) \rho$. In dimension $d>1$,
  if $W$ is a finite discrete measure, $W=\sum_{i\ge 1} w_i
  \delta_{x_i}$, we prove that the random walk which jumps from $x/N$
  uniformly to one of its neighbors at rate $(W^N_x)^{-1}$ has a
  metastable behavior, as defined in \cite{bl1}, described by the
  $K$-process introduced in \cite{fm1}.
\end{abstract} 

\address{Ceremade, UMR CNRS 7534,
    Universit\'e de Paris IX - Dauphine,
    Place du Mar\'echal De Lattre De Tassigny
    75775 Paris Cedex 16 - France.
\newline
e-mail: \rm \texttt{jara@ceremade.dauphine.fr}}

\address{\noindent IMPA, Estrada Dona Castorina 110, CEP 22460 Rio de
  Janeiro, Brasil and CNRS UMR 6085, Universit\'e de Rouen, Avenue de
  l'Universit\'e, BP.12, Technop\^ole du Madril\-let, F76801
  Saint-\'Etienne-du-Rouvray, France.  \newline e-mail: \rm
  \texttt{landim@impa.br} }

\address{\noindent Departement Mathematik ETH-Z\"urich, HG G 47.2,
  R\"amistrasse 101, 8092 Z\"urich, Switzerland.
\newline e-mail: \rm \texttt{augusto.teixeira@math.ethz.ch} }

\keywords{Trap models, scaling limit, hydrodynamic equation, gap
  diffusions, metastability} 

\maketitle

\section{Introduction}
\label{sec0}

Scaling limits of randoms walks in random trap environments have been
examined recently \cite{fin, bc1, bc3} as stochastic models which
exhibit aging \cite{fin, bc4, bcm1}, a phenomenon of considerable
interest in physics and mathematics.

To describe the dynamics, fix an unoriented graph $G = (V,E)$ with
finite degree and consider a sequence of i.i.d.\! strictly positive
random variables $\{\xi_z : z\in V\}$ indexed by the vertices. Let
$\{X_t : t\ge 0\}$ be a continuous time random walk on $V$ which waits
a mean $\xi_z$ exponential time at site $z$, at the end of which it
jumps to one of its neighbors with uniform probability.

The time spent by the random walk on a vertex $z$ is proportional to
the value of $\xi_z$. It is thus natural to regard the environment as
a landscape of valleys or traps with depth given by the value of the
random variables $\{\xi_z : z\in V\}$.  As the random walk evolves, it
explores the random landscape, finding deeper and deeper traps, and
aging appears as a consequence of the longer and longer times the
process remains at the same vertex.

It is clear from the description that random walks on random trap
environments should present a very rich scaling fractal structure if
one chooses appropriate graphs and random environments. For each given
time scale, only traps at a certain depth matter. The deeper valleys
are too sparse to influence the evolution and the shallower wells are
not deep enough to retain the process.

We are concerned in this article with the lattice case: $\{\xi_z :
z\in \bb Z^d\}$ is a sequence of i.i.d.\! strictly positive random
variables and $\{X_t : t\ge 0\}$ a continuous time random walk on
$\bb Z^d$ which waits a mean $\xi_z$ exponential time at site $z$, at
the end of which it jumps to one of its neighbors with probability
$1/2d$.

When $\xi_0$ has finite mean, for almost all environments $\{\xi_z :
z\in \bb Z^d\}$, the rescaled random walk $\epsilon
X_{t\epsilon^{-2}}$ converges in distribution to a Brownian motion.
In dimension $1$, we can use the method of random time change to study
the problem explicitly and a simple computation establishes the result
\cite{s}. In this case, the diffusion coefficient is equal to $
E[\xi_0]^{-1}$, the harmonic mean of the random rates $\{\xi_z^{-1} :
z\in \bb Z^d\}$. Observing that the random walk is a martingale, in
higher dimension, by examining the evolution of the environment as
seen from the position of the random walk, the proof of the invariance
principle is reduced to the proof of an ergodic theorem for the
dynamics of the environment \cite{pv1}. An explicit formula for the
variance is, however, no longer available.

To investigate the case where the environment has an infinite mean, a
natural assumption is to suppose that the distribution of $\xi_0$
belongs to the domain of attraction of an $\alpha$-stable law,
$0<\alpha<1$. The variables $\{\xi_z : z\in \bb Z^d\}$ take now large
values in certain sites, forcing the random walk to stay still for a
long time when it reaches one of them, causing a macroscopic
subdiffusive behavior.

In dimension $1$, Fontes, Isopi and Newman \cite{fin} proved under
these hypotheses that for almost all environments, the random walk
converges, in the time scale $t^{1+(1/\alpha)}$, to a singular
diffusion with a random discrete speed measure. In dimension $d\ge 2$,
Ben Arous and ${\rm \check C}$ern\'y \cite{bc1} proved that for almost
all environments the Bouchaud trap model converges in a proper time
scale, $t^{2/\alpha}$ in dimension $d\ge 3$ and a scale logarithmic
smaller than $t^{2/\alpha}$ in dimension $2$, to the
fractional-kinetic process, a self-similar, non-Markovian, continuous
process, obtained as the time change of a Brownian motion by the
inverse of an independent $\alpha$-stable subordinator. In fact, they
proved, under quite general conditions on the environment, that the
clock process converges to an $\alpha$-stable subordinator, for a
large range of time scales \cite{bc3}. In these time scales, the
random walk does not visit the deepest traps, but exhibit an aging
behavior. During the exploration of the random scenery, the process
discovers deeper and deeper traps which slow down its evolution, the
mechanism responsible for the aging phenomenon.  We refer to
\cite{bc2, c1} for recent reviews.

We present in this article two results. The first one establishes the
hydrodynamic behavior, almost sure with respect to the environment, of
a superposition of independent random walks evolving on the
one-dimensional torus with a trap environment of $\alpha$-stable
i.i.d.\! random variables. The hydrodynamic equation, describing the
macroscopic evolution of the density, is given by the generalized
second order linear equation
\begin{equation*}
\frac d{dt} \rho (t,x) \; =\; \frac {d}{dW}\, \frac{d}{dx} \, \rho
(t,x) \; ,
\end{equation*}
where $W$ is an $\alpha$-stable subordinator deriving from the
realization of the environment. The Krein--Feller operator $(d/dW)
(d/dx)$ is the generator of the singular diffusion obtained by
Fontes, Isopi and Newman \cite{fin} as scaling limit of the random
walk in the trap environment.

The striking feature of this result is that the random environment
survives entirely in the limit, since even the differential operator,
which describes the macroscopic evolution of the density, depends on
the specific realization of the environment. A similar phenomenon was
observed in \cite{fjl1, fl1, v} for exclusion processes with
$\alpha$-stable random conductances.

The second result describes the evolution of the random walk in the
random environment, produced by $\alpha$-stable i.i.d.\! random
variables, in dimension $d\ge 2$ in the time scale needed to visit the
deepest traps. In the notation of Theorem 4.1 in \cite{bc3}, this
corresponds to the case $\gamma = 0$.

In dimension $2$, on the time scale $N^{2/\alpha} \log N$, we prove
that the random walk, evolving on the discrete torus $(\bb Z/N \bb
Z)^2$, converges to the Markov $K$-process introduced by Fontes and
Mathieu \cite{fm1}, which in the present context can be informally
described as follows. The state space is formed by the countable and
dense subset of deepest traps. The process stays at one of these
sites an exponential time, with expectation proportional to the depth
of the trap, at the end of which it jumps to a new location, chosen
with uniform probability among the deepest traps. The scaling limit is
similar in dimension $d\ge 3$, but the time scale is now
$N^{d/\alpha}$.  In the terminology of \cite{bl1}, these results
establish the metastability of the random walk in dimension $d\ge 2$.

Convergence to the $K$-process has been proved by Fontes and Mathieu
\cite{fm1} for the trap model in the complete graph and by Fontes and
Lima \cite{fli1} for the trap model in the hypercube. We believe that
this is a universal behavior of random walks on graphs with heavy
tailed random trap environments in the ergodic time scale, the scale
proportional to the time needed to jump from one very deep trap to
another.  At least in sufficiently high dimension.

It is in fact quite surprising that even in low dimensions the
geometry of the torus is completely wiped out in the scaling limit of
the random walk in a random trap environment, as proved below.

We conclude this introduction by specifying the random environment we
consider in this article. Though we shall work on the torus, we
present the construction on $\bb R^d$.  Let $\lambda$ be the measure
on $\bb R^d \times (0,\infty)$ given by $\lambda = \alpha
w^{-(1+\alpha)} dx\, dw$, $0<\alpha<1$. Denote by $\{(\mb x_i,w_i) \in
\bb R^d \times (0,\infty): i\ge 1\}$ the marks of a Poisson point
process of intensity $\lambda$, and define the measure $W$ on $\bb
R^d$ by
\begin{equation*}
W \;=\; \sum_{i\ge 1} w_i \, \delta_{\mb x_i}\;.
\end{equation*}
For $z =(z_1, \dots , z_d)$ in $\bb Z^d$, let $[z/N, [z+ \mb 1]/N)$ be the
$d$-dimensional cube $\prod_{1\le i\le d}$ $[z_i/N, [z_i+1]/N)$ and let
\begin{equation*}
\xi_z^N = N^{d/\alpha} \sum_{i \ge 1}   w_i \,
\mb 1\{ \mb x_i \in [z/N , (z+\mb 1)/N) \} \; ,
\end{equation*}
where $\mb 1\{A\}$ stands for the indicator of the set $A$. We show in
the next section that, for each $N\ge 1$, $\{\xi_z^N : z \in \bb
Z^d\}$ are i.i.d. random variables with a common $\alpha$-stable
distribution, independent of $N$. Following \cite[Section 3]{fin}, we
may refine this construction to obtain i.i.d.\! random variables
distributed according to any law in the domain of attraction of an
$\alpha$-stable law.

Taking the array $\{\xi_z^N : z \in \bb Z^d\}$, $N\ge 1$, as our
environment, instead of a sequence $\{\xi_z : z \in \bb Z^d\}$ of
i.i.d.\!  random variables in the domain of attraction of an
$\alpha$-stable law, as it is usually done, produces noticeable
differences in the scaling limit, the main one being the survival of
the measure $W$.

\section{Notation and Results}
\label{sec1}

Fix a finite, strictly positive measure $W$ on the $d$-dimensional
torus $\bb T^d$:
\begin{equation}
\label{f15}
W (A) \;>\; 0 \quad\text{for any open set $A$.}
\end{equation}
Denote by $\bb T_N^d$ the $d$-dimensional,
discrete torus $(\bb Z /N \bb Z)^d$. Let $W^N_x$, $x\in\bb T^d_N$, be
the $W$-measure of the $d$-dimensional cube $[x/N, (x+\mb 1)/N)$,
where $\mb 1$ is the vector with all components equal to $1$: $\mb 1 =
(1, \dots, 1)$:
\begin{equation}
\label{f14}
W^N_x \;=\; W\big\{ [x/N, (x+\mb 1)/N)\, \big\}\;.
\end{equation}
We examine in this article the evolution of a continuous time, nearest
neighbor, symmetric random walk on $\bb T^d_N$ which waits a mean
$W^N_x$ exponential time at site $x$.  Its generator $\mc L_N$ is
given by:
\begin{equation}
\label{fc03}
(\mc L_N f)(x) \;=\; \frac 1{2d} \, \frac 1{W^N_x} \, \sum_{y\sim x} [f(y) -
f(x)]\; ,
\end{equation}
for every $f:\bb T^d_N\to\bb R$, where $(y_1, \dots, y_d) = y \sim x =
(x_1, \dots, x_d)$ if $|y-x| = \sum_{1\le i\le d} |x_i-y_i| =1$.

\subsection{Hydrodynamic limit in dimension $1$.}

Consider a finite number of random walks evolving independently on
$\bb T_N$ according to the dynamics defined by the generator $\mc
L_N$. Let $\bb N_0$ be the non-negative integers: $\bb N_0 = \{0, 1,
\dots\}$. Denote by $\Omega_N = \bb N_0^{\bb T_N}$ the state space of
the process and by $\eta$ the configurations of $\Omega_N$ so that
$\eta(x)$, $x\in \bb T_N$, represents the number of particles at site
$x$ for the configuration $\eta$.

This evolution corresponds to a Markov process on $\Omega_N$ whose
generator $L_N$ is given by
\begin{equation*}
(L_N f) (\eta) =  \frac 12 \, \sum_{x\in \bb T_N} \sum_{y \sim x} 
\frac{\eta(x)}{N W^N_x} \big[f(\eta^{x,y})-f(\eta)\big],
\end{equation*}  
where $f:\Omega_N \to \bb R$ is a bounded function and $\eta^{x,y}$
stands for the configuration obtained from $\eta$ by moving a particle
from site $x$ to site $y$:
\begin{equation*}
\eta^{x,y}(z)=
\begin{cases}
\eta(x)-1, &z=x\\
\eta(y)+1, &z=y\\
\eta(z), & z \neq x,y.\\
\end{cases}
\end{equation*}

Notice that we have slowed down the dynamics by a factor $N$. We did
that in order to have a jump rate $N W^N_x$ of order one if the
measure $W$ is absolutely continuous with respect to the Lebesgue
measure in a neighborhood of $x/N$. Indeed, in this case, if we denote
by $w$ the Radon-Nikodym derivative of $W$, $N W^N_x = N \int_{[x/N,
  (x+1)/N)} w(y) dy$ is of order one. In contrast, if $W$ has a point
mass at $x/N$, $N W^N_x$ is of order $N$, which means that particles
wait exponential times of order $N$ at sites where $W$ has point
masses.  Particles are thus trapped on these sites.

Denote by $\{\eta_t : t\ge 0\}$ the Markov process with generator
$L_N$ \emph{speeded up} by $N^2$. Let $D(\bb R_+, \Omega_N)$ be the
space of right continuous trajectories $\xi: \bb R_+\to \Omega_N$ with
left limits, endowed with the Skorohod topology. For a measure $\mu$
on $\Omega_N$, let $\bb P_{\mu}$ be the probability measure on $D(\bb
R_+, \Omega_N)$ induced by the Markov process $\{\eta_t : t\ge 0\}$
starting from $\mu$.

For $\rho \geq 0$, let $\mf P_\rho$ be the Poisson probability
distribution with parameter $\rho$ in $\bb N_0$: $\mf P_\rho\{k\} =
e^{-\rho} \rho^k/k!$, $k\ge 0$. Denote by $\nu^N_\rho$ the product
measure on $\Omega_N$ with marginals defined by
\begin{equation}
\label{f11}
\nu^N_\rho\{\eta : \eta(x) = k\} \;=\; \mf P_{\rho W_x^N}\{k\}
\;,\quad x \in \bb T_N\;, \; k\ge 0\;.
\end{equation}
It is not hard to see that the measures $\nu^N_\rho$ are invariant and
reversible for the generator $L_N$.

Let $\mc M(\bb T)$ be the space of finite positive measures on the
torus $\bb T$, endowed with the weak topology.  Fix $\gamma>0$ and
denote by $\pi^N = \pi^N(\eta) \in \mc M(\bb T)$ the measure obtained
from a configuration $\eta$ by assigning mass $N^{-\gamma}$ to each
particle:
\begin{equation}
\label{f03}
\pi^N \;=\; \frac{1}{N^\gamma} \sum_{x \in \bb T_N} \eta(x)
\, \delta_{x/N}\; , 
\end{equation}
where $\delta_{x/N}$ stands for the Dirac's measure at $x/N$.
For a continuous function $H:\bb T\to \bb R$, denote by $\<\pi^N,
H\>$ the integral of $H$ with respect to $\pi^N$ so that
\begin{equation*}
\<\pi^N, H\> \;=\; \frac{1}{N^\gamma} \sum_{x \in \bb T_N} 
H(x/N) \, \eta(x)\;.
\end{equation*}

Fix a continuous function $u_0:\bb T\to\bb R_+$ and denote by
$\mu^N_{u_0(\cdot)}$ the product measure on $\Omega_N$ with marginals
given by
\begin{equation}
\label{f10}
\mu^N_{u_0(\cdot)}\{\eta : \eta(x) =k\} \; =\; 
\mf P_{u_0(x/N) N^{\gamma} W^N_x} \{k\} 
\;,\quad x \in \bb T_N\;, \; k\ge 0 \; .
\end{equation}
When $u_0$ is constant function equal to $\rho$, we denote
$\mu^N_{u_0(\cdot)}$ simply by $\mu^N_{\rho} = \mu_{\rho}$.  Thus,
under $\mu^N_{u_0(\cdot)}$, $\eta(x)$ has a Poisson distribution with
parameter $u_0(x/N) W^N_x N^{\gamma}$. 

An elementary computation shows that $\<\pi^N, H\>$ converges to $\int
H(x) u_0(x) W(dx)$ in $L^2(\mu^N_{u_0(\cdot)})$ for every continuous
function $H$:
\begin{equation*}
\lim_{N\to\infty} E_{\mu^N_{u_0(\cdot)}} \Big[ \Big( \<\pi^N, H\> -
\int_{\bb T} H(x) u_0(x) W(dx) \Big)^2 \Big]\;=\;0\;.
\end{equation*}

\noindent{\bf The hydrodynamic equation.}
Let $\mc H_{1}$ be the Sobolev space of all functions in $L^2(\bb T)$
with generalized derivative in $L^2(\bb T)$ endowed with the scalar
product $\< \cdot, \cdot\>_{1,2}$ defined by
\begin{equation*}
\< f, g \>_{1,2}  \;=\; \<f,g\> \;+\;
\int_{\bb T} \, (\partial_x f)(x) \, (\partial_x g) (x) \, dx\;,
\end{equation*}
where $\<\cdot, \cdot \>$ stands for the usual scalar product of
$L^2(\bb T)$. It is well known that the space of functions with
continuous partial derivatives of all order is dense in $\mc H_1$. 
Moreover, any function in $\mc H_1$ has a continuous version.

Denote by $L^2(dW)$ the Hilbert space associated to the measure
$W(dx)$, and by $\<f,g\>_W$ the corresponding inner product. 

\renewcommand{\theenumi}{\roman{enumi}}
\renewcommand{\labelenumi}{{\rm (\theenumi)}}

\begin{definition}
\label{s11}
A bounded measurable function $u: [0,T] \times \bb T \to \bb R$ is a
weak solution of 
\begin{equation}
\label{ec2}
\left\{
\begin{array}{l}
{\displaystyle
\frac d{dt} \, u = \frac 12\, \frac d{dW} \frac d{dx} u \;, }\\
{\displaystyle \vphantom{\Big\{}
u(0,\cdot) = u_0(\cdot) \;;}\\
\end{array}
\right.
\end{equation} 
if
\begin{enumerate}
\item It has finite energy: 
\begin{equation*}
\int_0^T \<u_t, u_t\>_{1,2} \, dt \;<\; \infty\;, 
\end{equation*}
\item For any smooth function $G: [0,T]\times \bb T \to \bb R$
  vanishing at $T$, $G_T=0$,
\begin{equation*}
\<G_0,u_0\>_W \;+\; \int_0^T \<\partial_t G_t,u_t\>_W \, dt
\;=\; \frac 12 \, \int_0^T \< \partial_x G_t , \partial_x u_t \>
\, dt \;.
\end{equation*}
\end{enumerate}
\end{definition}

We prove at the end of this article that there is at most one weak
solution of \eqref{ec2}. Denote by $\pi^N_t$, $t\ge 0$, the empirical
measure associated to the state of the process at time $t$:
\begin{equation*}
\pi^N_t \;=\; \frac{1}{N^\gamma} \sum_{x \in \bb T_N} \eta_t(x)
\, \delta_{x/N}\; , 
\end{equation*}
and recall that time has been speeded up by $N^2$.

\begin{theorem}
\label{t2.1}
Let $W$ be a finite, positive measure on $\bb T$ satisfying
\eqref{f15}. Assume that there exists $\gamma_0>0$ such that
\begin{equation}
\tag*{{\bf (H1)}} 
\lim_{N\to\infty} \frac{1}{N^{2+\gamma_0}} \sum_{x \in \bb T_N} 
\frac{1}{W^N_x} \;=\;0\;.
\end{equation}
Fix $\gamma \ge \gamma_0$.  Then, for every $t\ge 0$, every continuous
function $H:\bb T\to\bb R$, and every $\delta>0$,
\begin{equation*}
\lim_{N\to\infty} \bb P_{\mu^N_{u_0(\cdot)}} \Big[ \, \Big| \<\pi^N_t, H\> -
\int_{\bb T} H(x) u(t,x) W(dx) \Big|\, >\delta \, \Big]\;=\;0\;,
\end{equation*}
where $u$ is the unique weak solution of \eqref{ec2}.
\end{theorem}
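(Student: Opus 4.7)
Since the particles are independent random walks with one-particle generator $N\mc L_N$ (the paper's single-particle generator sped up by $N^2$), my plan exploits this independence throughout. For each smooth time-dependent test function $G:[0,T]\times\bb T\to\bb R$, Dynkin's formula gives
\begin{equation*}
  M^{N,G}_t \;:=\; \<\pi^N_t, G_t\> - \<\pi^N_0, G_0\> - \int_0^t \bigl[\<\pi^N_s, \partial_s G_s\> + \<\pi^N_s, \mc L^N G_s\>\bigr]\, ds
\end{equation*}
is a martingale, where $(\mc L^N G)(x) := \frac{N}{2 W^N_x}[G((x+1)/N) + G((x-1)/N) - 2 G(x/N)]$ is the discrete Krein--Feller operator. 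Poisson thinning/superposition and reversibility of $N\mc L_N$ with respect to the weights $W^N_x$ imply that the marginals $\eta_t(x)$ remain independent Poisson at all times, with explicit mean $E[\eta_t(x)] = N^\gamma W^N_x\, u^N_t(x)$, where $u^N_t$ solves the discrete Cauchy problem $\partial_t u^N_t = N\mc L_N u^N_t$, $u^N_0(x)=u_0(x/N)$. In particular $E[\eta_t(x)]\le \|u_0\|_\infty N^\gamma W^N_x$.

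The next step is to control the fluctuations. A carré du champ computation combined with the above pointwise mean bound yields $E[\<M^{N,G}\>_T] \le C(G,u_0)\, N^{-\gamma}$, so $M^{N,G}_T\to 0$ in $L^2$. Using independence of the Poisson marginals and $|\Delta_N G|\le C/N^2$,
\begin{equation*}
  \mathrm{Var}\bigl[\<\pi^N_s, \mc L^N G_s\>\bigr] \;\le\; \frac{C(G,u_0)}{N^{\gamma+2}} \sum_{x\in\bb T_N}\frac{1}{W^N_x}\,,
\end{equation*}
which vanishes precisely by hypothesis (H1) together with $\gamma\ge \gamma_0$; this is the crucial use of (H1). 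The expectation of the test-function integral is $E[\<\pi^N_t, H\>] = \sum_x H(x/N)\, W^N_x\, u^N_t(x)$, and Mosco convergence of the discrete Dirichlet forms $-\<f, N\mc L_N f\>_{W^N} = (N/2)\sum_x |\nabla_N f(x)|^2$ on $L^2(W^N)$ to $(1/2)\int(\partial_x f)^2\, dx$ on $\mc H_1\subset L^2(W)$ produces $u^N_t\to u_t$ in $L^2(W)$ (and uniformly, via a parabolic maximum principle), so this expectation tends to $\int H(x)\, u_t(x)\, W(dx)$. The finite-energy condition required in Definition~\ref{s11} is obtained from the discrete estimate $\int_0^T (N/2)\sum_x |\nabla_N u^N_t(x)|^2\, dt \le \|u_0\|_{L^2(W^N)}^2$ passed to the limit by lower semicontinuity.

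Tightness of $\{\pi^N_\cdot\}$ in $D([0,T], \mc M(\bb T))$ follows from Aldous' criterion using the estimates above. Passing to the limit in the Dynkin identity for $G$ with $G_T=0$, the vanishing of $M^{N,G}$ and the concentration of the drift around its mean (via (H1)), together with the convergence of the expectations, identifies every subsequential limit with $u_t(\cdot)W(d\cdot)$ where $u$ is a weak solution of \eqref{ec2}; uniqueness of weak solutions then upgrades distributional convergence to convergence in probability at each fixed $t$. The main obstacle is the double role of (H1): it is needed simultaneously to kill the variance of $\<\pi^N_s,\mc L^N G_s\>$ (whose square is of order $(W^N_x)^{-2}$, producing the critical $\sum_x 1/W^N_x$ after integration against the Poisson variance) and to carry the discrete $H^1$-type energy bounds through the Mosco limit so as to secure the finite-energy condition indispensable for the uniqueness argument.
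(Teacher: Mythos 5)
Your route is genuinely different from the paper's. The paper never uses the independence of the particles in an essential way: it runs the general machinery for interacting systems (relative entropy bound \eqref{f08}, attractiveness/coupling to the stationary Poisson measure, the auxiliary space--time measure $\mf M^N$, a two-blocks estimate via Feynman--Kac, an energy estimate, and the identification $u=v$ of the $W$-density with the Lebesgue density in Lemma \ref{s06}). You instead exploit the fact that a product Poisson field pushed forward by independent walks stays product Poisson, so that $E[\eta_t(x)]=N^\gamma W^N_x u^N_t(x)$ with $u^N_t$ solving the discrete Cauchy problem; this is correct (it follows from reversibility of the one-particle walk with respect to $W^N$), and it collapses all the probabilistic difficulty: the variance of $\<\pi^N_t,H\>$ is $O(N^{-\gamma})$ by Poisson independence alone, so the theorem reduces to the deterministic statement $\sum_x H(x/N)W^N_x u^N_t(x)\to\int H\,u_t\,dW$. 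If that deterministic convergence were in hand, you would not even need the Dynkin identity, the tightness argument, or the uniqueness theorem; conversely, if you go through the Dynkin identity you do not need full semigroup convergence. As written, the proposal carries both routes simultaneously, which obscures where the real work is.

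The genuine gap is precisely that deterministic step, which you dispatch with the single word ``Mosco.'' Convergence of the forms $(N/2)\sum_x|\nabla_N f|^2$ on $L^2(W^N)$ to $(1/2)\int(\partial_xf)^2dx$ on $L^2(W)$ involves varying Hilbert spaces, and you must show (i) that the limit form generates the Krein--Feller operator whose weak solutions are those of Definition \ref{s11}, and (ii) that $u^N_t$ converges not only in the $L^2(W)$ sense (needed for the terms $\<\partial_tG_t,u_t\>_W$) but also against \emph{Lebesgue} measure (needed for the term $\frac12\int\<\partial_xG_t,\partial_xu_t\>dt$, equivalently for the limit of $N^{-1}\sum_xG''(x/N)u^N_s(x)$). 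This double identification --- the same discrete function must serve as density with respect to two mutually singular reference measures in the limit --- is exactly what the paper's two-blocks estimate (Lemma \ref{s02}, Corollary \ref{s10}), the energy estimate (Proposition \ref{s07}), and Lemma \ref{s06} are for; none of it is automatic from an energy bound plus lower semicontinuity. Two further inaccuracies: the parabolic maximum principle gives $\|u^N_t\|_\infty\le\|u_0\|_\infty$, not uniform convergence; and your account of the ``double role'' of {\bf (H1)} is off --- in your scheme {\bf (H1)} enters only through the variance of $\<\pi^N_s,\mc L^NG_s\>$ (the energy bound and its lower-semicontinuous limit need no such hypothesis), whereas in the paper it also controls tightness of $\mf M^N$ and the Feynman--Kac bounds.
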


If the measure $W$ is absolutely continuous with respect to the
Lebesgue measure and its Radon-Nikodym derivative, denoted by $w(x)$,
is strictly positive, $w >0$ a.s., the previous theorem states that the
empirical measure $\pi^N_t$ converges to the measure $\pi(t,dx) =
u(t,x) w(x) dx$, whose density $u$ is solution of
\begin{equation*}
\left\{
\begin{array}{l}
\partial_t u = (1/2) w^{-1} \Delta u \\
u(0,\cdot) = u_0(\cdot) \; .
\end{array}
\right.
\end{equation*}

The proof of the hydrodynamic behavior of the empirical measure
differs sensibly from the usual ones due to the space irregularity of
the environment. The lack of smoothness is reflected in the dynamics
by an erratic time evolution. To overcome this issue, we average not
only in space but also time, investigating the asymptotic behavior of
the measure $\mf M^N$ on $[0,T]\times \bb T$, defined by
\begin{equation*}
\mf M^N \;=\; \int_0^T \frac 1{N^{1+\gamma}} \sum_{x\in \bb T_N}
\frac{\eta_t(x)}{W^N_x} \, \delta_{x/N} \, dt \;,
\end{equation*}
which does not capture space and time discontinuities.

\subsection{Metastable behavior of the trap model in dimension
$d\ge  2$}
\label{ss21}
Fix a finite, strictly positive, atomic measure $W$ on the
$d$-dimensional torus $\bb T^d$:
\begin{equation*}
W \;=\; \sum_{i\ge 1} w_i \, \delta_{x_i}\;,
\end{equation*}
where $\{x_i : i\ge 1\}$ is a dense subset of $\bb T^d$ and
$\sum_{i\ge 1} w_i <\infty$.

Denote by $\{\hat w_i : i\ge 1\}$ the weights of $W$ in decreasing
order so that $\{\hat w_i : i\ge 1\} = \{w_i : i\ge 1\}$ and $\hat w_1
\ge \hat w_2 \ge \cdots$. In case of ties, choose the smallest site
according to some pre-established order. Let $\{\hat x_i : i\ge 1\}$
be the position of the atoms of $W$ corresponding to the weights
$\{\hat w_i : i\ge 1\}$:
\begin{equation*}
W \;=\; \sum_{i\ge 1} w_i \, \delta_{x_i}
\;=\; \sum_{i\ge 1} \hat w_i \, \delta_{\hat x_i}\;.
\end{equation*}

Recall the definition of $W^N_x$ given in \eqref{f14}.  Denote by
$\{X^N_t : t\ge 0\}$ the random walk on $\bb T_N^d$ with generator
$\mc L_N$. Let $D(\bb R_+, \bb T^d_N)$ be the path space of right
continuous trajectories $\omega: \bb R_+\to \bb T^d_N$ with left
limits endowed with the Skorohod topology.  Denote by $\bb P^N_x$,
$x\in \bb T^d_N$, the probability measure on $D(\bb R_+, \bb T^d_N)$
induced by the Markov process $\{X^N_t\}$ starting from $x$.
Expectation with respect to $\bb P^N_x$ is denoted by $\bb E^N_x$.

Denote by $\nu^N$ the unique stationary state of the process $\{X^N_t
: t\ge 0\}$. An elementary computation shows that $\nu^N$ is in fact
reversible and given by
\begin{equation*}
\nu^N(x) \;=\; \frac 1{W(\bb T^d)} W^N_x\;.
\end{equation*}

Enumerate $\bb T^d_N$ according to the weights $\{W^N_x\}$ in
decreasing order:
\begin{equation*}
\bb T^d_N \;=\; \{x^N_1, x^N_2, \dots , x^N_{N^d}\}\;, \quad
W^N_{x^N_1} \;\ge\; W^N_{x^N_2} \;\ge\; \cdots \;\ge\; W^N_{x^N_{N^d}}\;.
\end{equation*}
In case of ties, choose the smallest site according to some
pre-established order. Following \cite{bc1}, we call the sites $x^N_j$,
$j$ fixed, the very deep traps. These are the relevant states of the
trap random walk on the scale observed here.

Since $W(\bb T^d)$ is finite, we may assume that for every $M>0$,
there exists $N_0$ such that $\hat x_j \in [x^N_j/N - (1/2N) \mb 1 ,
x^N_j/N + (1/2N) \mb 1]$, $1\le j\le M$, for all $N\ge N_0$.

To define the trace of the process $\{X^{N}_t : t\ge 0\}$ on a
subset $F$ of $\bb T^d_N$, let $\mc T_N^{F}(t)$, $t\ge 0$, $F\subset\bb
T^d_N$, be the time the process remains in the set $F$ in the interval
$[0,t]$:
\begin{equation*}
\mc T_N^{F}(t) \;:=\;\int_{0}^t \mathbf{1}\{X^N_s\in F\}\, ds\;,
\end{equation*}
and let $\mc S^{F}_N(t)$ be the generalized inverse of $\mc
T^{F}_N(t)$:
\begin{equation*}
\mc S^{F}_N(t) \;:=\;\sup\big\{s\ge 0 : \mc T^{F}_N(s)\le t\big\}\; .
\end{equation*}
It is well known that the process $\{X^{N,F}_t : t\ge 0\}$ defined by
\begin{equation*}
X^{N,F}_t \;=\; X^N (\mc S^{F}_N(t))
\end{equation*}
is a Markov process with state space $F$, called the trace of
$\{X^N_t\}$ on $F$.

Let $\{\bb Y_k : k\ge 0\}$ be the $d$-dimensional, nearest-neighbor,
symmetric, discrete time random walk on $\bb Z^d$ starting from the
origin.  For $d\ge 3$, denote by $v_d$ the probability that $\{\bb
Y_k\}$ never returns to the origin:
\begin{equation*}
v_d \;=\; \bb P_0 \big[ \bb Y_k \not = 0 \text { for all $k\ge 1$ } \big]\;.
\end{equation*}

Let $A^N_M = \{x^N_1, \dots , x^N_{M}\}$, $1 \le M \le N^d$, and
denote by $\{\hat X^{N,M}_t : t\ge 0\}$ the trace of the process
$\{X^{N}_t : t\ge 0\}$ on the set $A^N_M$. The second main result of
this article states that in dimension $d\ge 3$, the trace process
$\{\hat X^{N,M}_t\}$ converges, as $N\uparrow\infty$, to the random
walk on $\{\hat x_1, \dots, \hat x_M\}$ which waits a mean $\hat
w_j/v_d$ exponential time at $\hat x_j$ and then jumps to $\{\hat x_i
: 1\le i\le M\}$ with uniform probability. Note that we do not rule
out the possibility that the process jumps back to the site where it
was.  To state the result, let $\Psi_N : \bb T^d_N\to \bb N$, be
defined by $\Psi_N(x^N_j) =j$ and let $X^{N,M}_t = \Psi_N(\hat
X^{N,M}_t)$.  Clearly, $\{X^{N,M}_t : t\ge 0\}$ is a Markov process on
$\{1, \dots, M\}$.

\begin{theorem}
\label{mt4}
Fix $T>0$ and assume that $d\ge 3$. As $N\uparrow\infty$, the law of
$\{X^{N,M}_t: 0\le t\le T\}$ converges in distribution to a random walk
in $\{1, \dots, M\}$ with generator $\mf L_M$ given by
\begin{equation*}
(\mf L_M f) (i) \;=\; \frac {v_d}{M \hat w_i}
\sum_{j=1}^M [f(j) - f(i)]\; .
\end{equation*}
Moreover,
\begin{equation*}
\lim_{M\to\infty} \limsup_{N\to\infty}
\max_{1\le j\le M} \bb E^N_{x^N_j} \big[ \mc
T^{\Delta_{N,M}}_N(T) \big] \;=\;0\;,
\end{equation*}
where $\Delta_{N,M} = \bb T^d_N \setminus A^N_M$.
\end{theorem}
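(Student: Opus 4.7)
The starting point is that the generator $\mc L_N$ defines nothing but a time change of the simple random walk on $\bb T^d_N$: one checks that the conductances $\nu^N(x)\, r(x,y) = 1/(2d\, W(\bb T^d))$ are uniform along every edge, so the embedded discrete chain of $X^N$ is the symmetric nearest-neighbor walk, and all hitting probabilities and harmonic measures for $X^N$ coincide with those of simple random walk on the torus. This reduces the potential-theoretic content of the theorem to unweighted calculations.

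Following the approach of \cite{bl1}, the rate of the trace process $\hat X^{N,M}$ from $x^N_j$ to $x^N_k$ (with $k\ne j$) can be written as
\begin{equation*}
r^{N,M}(x^N_j, x^N_k) \;=\; \frac{1}{W^N_{x^N_j}}\, \bb P^N_{x^N_j}\!\Big[\, H_{A^N_M\setminus\{x^N_j\}} < H^+_{x^N_j},\;\; X^N_{H_{A^N_M\setminus\{x^N_j\}}} = x^N_k \,\Big].
\end{equation*}
I would analyze this probability by splitting it into a local and a global contribution. Locally, on a ball of radius $L$ about $x^N_j$ with $1\ll L\ll N$ containing no other point of $A^N_M$, the torus walk couples with SRW on $\bb Z^d$; by transience in $d\ge 3$ the probability of leaving this ball before returning to $x^N_j$ equals $v_d + o_L(1)$. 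Globally, once the walk reaches the boundary of that ball, the mixing time of SRW on $\bb T^d_N$, which is of order $N^2$, is much shorter than the order $N^d$ expected time to hit any single site, so the walk equidistributes long before it meets $A^N_M$; from a uniform distribution the well-separated points of $A^N_M$ are asymptotically exchangeable, so the first entry into $A^N_M$ hits each $x^N_k$ with limiting probability $1/M$. Combining these, the displayed probability converges to $v_d/M$ and the rate to $v_d/(M\hat w_j)$, which is the off-diagonal entry of $\mf L_M$. Because the state space $\{1,\dots,M\}$ is finite, pointwise convergence of the rate matrices is enough to conclude convergence in distribution of $X^{N,M}$ on $D([0,T],\{1,\dots,M\})$ by the standard finite-state criterion.

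For the ``moreover'' bound I would decompose $[0,T]$ into the successive excursions of $X^N$ from $x^N_j$. Writing $L_i$ for the $i$-th excursion length and $Y_i$ for the time spent in $\Delta_{N,M}$ during the $i$-th cycle, Kac's formula gives $\bb E[L_i] = W(\bb T^d)$, while the reversible identity $\bb E^N_{x^N_j}[\mc T^{\{y\}}_N(H^+_{x^N_j})] = W^N_y$ (valid for every $y$) yields $\bb E[Y_i] = W^N(\Delta_{N,M})$. Letting $N(T)$ be the number of returns to $x^N_j$ in $[0,T]$, Wald's identity gives
\begin{equation*}
\bb E^N_{x^N_j}\!\big[\mc T^{\Delta_{N,M}}_N(T)\big] \;\le\; \bb E[N(T)+1]\cdot W^N(\Delta_{N,M}) \;\le\; \big\{T/W(\bb T^d) + C\big\}\cdot W^N(\Delta_{N,M}),
\end{equation*}
with a uniform overshoot constant $C$ that follows from a uniform bound on $\bb E[L_1^2]$, which in turn can be obtained by a Cauchy--Schwarz decomposition of the cycle time over sites combined with the reversible identity above. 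Since $W^N(\Delta_{N,M}) \to \sum_{k>M}\hat w_k$ as $N\to\infty$ and the latter tends to $0$ as $M\to\infty$ because $\sum_i w_i<\infty$, the claim follows.

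The main obstacle is the global piece of the potential-theoretic estimate in the second paragraph: quantitatively pasting the local $\bb Z^d$-coupling to the torus mixing argument, and controlling the asymptotic uniformity of the hitting distribution on $A^N_M$ with effective error terms, will require genuine care. The uniform overshoot bound used in the ``moreover'' part is a secondary but real technicality.
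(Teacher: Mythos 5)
Your proposal is correct, and it splits naturally into a half that matches the paper and a half that does not. For the convergence of the trace process you take essentially the paper's route: the jump rates are read off from the trace-process formula \eqref{f05b} of \cite{bl1}, the problem is reduced to the embedded simple random walk, and the limit $v_d/M$ is obtained by factoring an excursion into a local escape (probability $v_d+o(1)$ by transience) and a global piece in which mixing on scale $N^2$ precedes hitting on scale $N^d$, so the first trap reached is asymptotically uniform over $A^N_M$. The quantitative version of that last step is exactly the paper's Lemma \ref{aug2}, proved via an identity expressing $\bb Q^N_y[H(x^N_1)<H(A^N_{M,1})]$ as a ratio of expected numbers of visits, themselves computed from expected hitting times; the obstacle you flag is precisely where that page of work sits, and it is fillable. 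For the ``moreover'' bound, however, your argument is genuinely different from and more elementary than the paper's: the paper bounds $\bb E^{N,x^N_j}_y[H(A^N_{M,j})]$ for the trace on $\bb T^d_N\setminus\{x^N_j\}$ via the capacity formula of Lemma \ref{s02b}, then couples $\hat X^{N,M}$ with the process that freezes at the last visited trap during excursions and controls the number of jumps by a geometric/Poisson comparison (Lemma \ref{s03b} and Corollary \ref{s01b}); you instead decompose $[0,T]$ into i.i.d.\ return cycles to $x^N_j$, use Kac's formula $\bb E[L_1]=W(\bb T^d)$ together with the occupation identity $\bb E_{x^N_j}\big[\mc T^{\{y\}}_N(\tau(x^N_j))\big]=W^N_y$, and close with Wald and a Lorden-type overshoot bound. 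This is sound: the uniform bound on $\bb E[L_1^2]$ you need does hold, since writing the cycle time as $\sum_y \ell_y$ exponential holding times at $y$, the visit counts satisfy $\bb E[\ell_y^2]\le C$ uniformly because $\bb P_y[H(x^N_j)<\hat\tau(y)]=\Cap_{Y^N}(x^N_j,y)$ is bounded below by a positive constant in $d\ge3$ (Lemma \ref{escape}, Corollary \ref{aug3}), while $\sum_y (W^N_y)^2\le \hat w_1\, W(\bb T^d)$; Cauchy--Schwarz then gives $\sup_{N,j}\bb E[L_1^2]<\infty$, and $W^N(\Delta_{N,M})\to\sum_{k>M}\hat w_k\to 0$ finishes the proof. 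Your route buys a shorter, purely renewal-theoretic proof of the negligibility of the time spent in $\Delta_{N,M}$, at the price of the second-moment technicality you flag; the paper's route avoids second moments entirely but requires the full trace-coupling machinery of \cite{bl1}.
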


\medskip In dimension $2$ the picture is similar, but the process
needs to be \emph{speeded up} by $\log N$. Denote by $\{\mf X^N_t :
t\ge 0\}$ the random walk on $\bb T^2_N$ with generator $(\log N) \mc
L_N$, where $\mc L_N$ has been introduced in \eqref{fc03}. Hence,
$\{\mf X^N_t : t\ge 0\}$ has the same distribution as $\{X^N_{t \log
  N} : t\ge 0\}$.

Denote by $\mb P^N_x$, $x\in \bb T^2_N$, the probability measure on
$D(\bb R_+, \bb T^2_N)$ induced by the Markov process $\{\mf X^N_t :
t\ge 0\}$ starting from $x$.  Expectation with respect to $\mb P^N_x$
is denoted by $\mb E^N_x$.  Denote by $\{\hat{\mf X}^{N,M}_t : t\ge
0\}$ the trace of the process $\{\mf X^N_t : t\ge 0\}$ on the set
$A^N_M$ and let $\mf X^{N,M}_t = \Psi_N(\hat{\mf X}^{N,M}_t)$.

\begin{theorem}
\label{mt5}
Fix $T>0$ and assume that $d=2$. As $M\uparrow\infty$, the law of
$\{\mf X^{N,M}_{t}: 0\le t\le T\}$ converges in distribution to a
random walk in $\{1, \dots, M\}$ with generator $\mf L^\star_M$ given
by
\begin{equation*}
(\mf L^\star_M f) (i) \;=\; \frac {\pi}{2} \frac{1}{ M \hat w_i}
\sum_{j=1}^M [f(j) - f(i)]\; .
\end{equation*}
Moreover, if we denote by $\mf T^{\Delta_{N,M}}_N(T)$ the time spent
by the process $\mf X^{N}_{t}$ in the set $\Delta_{N,M}$ on the time
interval $[0,t]$, 
\begin{equation*}
\lim_{M\to\infty} \limsup_{N\to\infty}
\max_{1\le j\le M} \mb E^N_{x^N_j} \big[ \mf
T^{\Delta_{N,M}}_N(T) \big] \;=\;0\;.
\end{equation*}
\end{theorem}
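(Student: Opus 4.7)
The plan is to mirror the metastability framework of \cite{bl1} used to prove Theorem~\ref{mt4} in $d\ge 3$; the only substantive change in $d=2$ is the replacement of the transient Green-function estimates on $\bb Z^d$ by the logarithmic asymptotics valid on $\bb Z^2$, which is precisely what forces the $\log N$ speed-up and produces the constant $\pi/2$ in the limiting generator $\mf L^\star_M$. For each fixed $M$, the criterion of \cite{bl1} reduces convergence of the trace process $\{\mf X^{N,M}_t\}$ to the Markov chain with generator $\mf L^\star_M$ to identifying the asymptotic mean holding time at each deep trap $x^N_j$ together with the asymptotic conditional probability of jumping from $x^N_j$ to each $x^N_i$ for $i\ne j$.

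The core step is a capacity computation. By reversibility of $\mf X^N_t$ with respect to $\nu^N$, the mean holding time at $x^N_j$ in the trace on $A^N_M$ equals $\nu^N(x^N_j)/\Cap^N(x^N_j,\,A^N_M\setminus\{x^N_j\})$, where $\Cap^N$ denotes the capacity associated with the accelerated generator $(\log N)\mc L_N$. Since $\nu^N(x^N_j)=W^N_{x^N_j}/W(\bb T^2)\to \hat w_j/W(\bb T^2)$, and the Dirichlet form of $\mc L_N$ differs from that of the symmetric simple random walk (SRW) on $\bb T^2_N$ only by the overall multiplicative constant $[4W(\bb T^2)]^{-1}$, the problem reduces to evaluating the SRW capacity from a single point to a finite collection of well-separated sites on $\bb T^2_N$. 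Using the classical logarithmic asymptotics of the two-dimensional torus Green function --- equivalently, the $\sim cN^2\log N$ behavior of mean hitting times for planar SRW --- I would show that $\nu^N(x^N_j)/\Cap^N(x^N_j,\,A^N_M\setminus\{x^N_j\})$ converges to $2M\hat w_j/[\pi(M-1)]$, which matches the inverse of the total exit rate $\pi(M-1)/(2M\hat w_j)$ at state $j$ prescribed by $\mf L^\star_M$. By the same Green-function symmetry, the conditional hitting probabilities $\mb P^N_{x^N_j}[\mf X^N_t\text{ hits }x^N_i\text{ first among }A^N_M\setminus\{x^N_j\}]$ share a common leading-order value $1/(M-1)$ for every $i\ne j$, yielding the uniform jump distribution encoded in $\mf L^\star_M$ (self-loops being absorbed into the conventional form of the generator).

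For the second assertion, $\lim_M\limsup_N\max_j\mb E^N_{x^N_j}[\mf T^{\Delta_{N,M}}_N(T)]=0$, I would proceed as in Theorem~\ref{mt4}: a reversibility/Kac-type bound gives $\mb E^N_{x^N_j}[\mf T^{\Delta_{N,M}}_N(T)]\le T\,\nu^N(\Delta_{N,M})/\nu^N(x^N_j)$ up to a controllable spectral-gap correction, and the right-hand side tends as $N\to\infty$ to $[W(\bb T^2)-\sum_{i\le M}\hat w_i]/\hat w_j$; this vanishes when $M\to\infty$ because $W$ has finite total mass. Tightness of the trace laws on $D([0,T],\{1,\dots,M\})$ is then immediate from the uniform control of the holding-time moments delivered by the capacity estimates.

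The hardest part will be the capacity asymptotics in step two. Although for fixed $M$ the atoms $\hat x_1,\dots,\hat x_M$ sit at positive pairwise distance, so that two-point capacity asymptotics on $\bb T^2_N$ apply to each pair individually, one must control the $O(1)$ corrections to the logarithmic Green function uniformly over the $M-1$ sinks and extract the precise constant $\pi/2$ after multiplying by the $\log N$ speed-up. This is qualitatively more delicate than the transient case $d\ge 3$, where $v_d$ arises directly from a single-site non-return probability and no cancellation of logarithmic Green-function tails is required.
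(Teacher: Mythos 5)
Your treatment of the first assertion is essentially the paper's: the paper also reduces the convergence of the trace process to the computation of the jump rates $r_{N,M}(i,j)=\frac{\log N}{W^N_{x^N_i}}\,\mathbb Q^N_{x^N_i}[H(x^N_j)<\hat\tau(A^N_{M,j})]$ and factors this, via the strong Markov property, into the escape probability from $x^N_i$ to distance $l_N$ (which is $\sim \pi/(2\log N)$ by the Green function asymptotics $G_{B(y,l_N)}(y,y)=\tfrac{2}{\pi}\log N+O(1)$) times the probability, starting far from all traps, of hitting a prescribed trap first (which is $\sim 1/M$, Lemma \ref{aug5}). Your capacity reformulation of the holding time as $\nu^N(x^N_j)/\Cap^N(x^N_j,A^N_{M,j})$ is equivalent via Lemma \ref{escape} and gives the same constant $2M\hat w_j/[\pi(M-1)]$; the only caveat is that the ``uniform choice of the next trap'' step is not a soft symmetry consequence in the recurrent case $d=2$ --- the paper needs a separate argument (alternating mixing and hitting windows on the scale $N^2\log N$) to prove it, since the $d\ge 3$ proof via transient escape probabilities is unavailable. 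You correctly flag this as the delicate point.

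The second assertion is where your argument has a genuine gap. The stationarity bound $\mb E^N_{x^N_j}[\mf T^{\Delta_{N,M}}_N(T)]\le T\,\nu^N(\Delta_{N,M})/\nu^N(x^N_j)$ is a correct inequality, but its right-hand side converges to $T\,\big[\sum_{i>M}\hat w_i\big]/\hat w_j$, and the theorem requires the maximum over $1\le j\le M$. For $j=M$ the ratio $\sum_{i>M}\hat w_i/\hat w_M$ need not vanish as $M\to\infty$: for $\hat w_i=2^{-i}$ it equals $1$, and for $\hat w_i=i^{-2}$ it diverges. So this bound cannot yield the uniform statement. The paper instead bounds each \emph{excursion} into $\Delta_{N,M}$: by the capacity representation of Lemma \ref{s02b}, the expected duration of an excursion from a neighbour of $x^N_j$ until return to $A^N_{M,j}$ is of order $M^{-1}\big[1-W(\bb T^2)^{-1}\sum_{i\le M}\hat w_i\big]$, \emph{uniformly in} $j$ (no factor $1/\hat w_j$ appears, because the numerator is a $\nu^N$-average of hitting probabilities over $\Delta_{N,M}$ and the denominator is a capacity bounded below by Corollary \ref{aug8}); it then controls the number of excursions before time $T$ by comparing the trace process with the limiting chain and a geometric/Poisson domination, so that the product (number of excursions)$\times$(excursion length) is $O\big(\sum_{i>M}\hat w_i\big)$ with no small denominator. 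Some argument of this excursion-counting type is needed; the one-shot stationarity bound does not close the proof.
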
 

We prove in Proposition \ref{st1} that in dimension $2$ the random
walk $\{X^N_t : t\ge 0\}$ with generator $\mc L_N$ does not leave a
very deep trap, staying there indefinitely. Therefore, on time scales
of order $1$ the random walk does not move, and on scales of order
$\log N$ the geometry is wiped out and the random walk jumps from a
very deep trap to another one, chosen with uniform probability.

Recall from \cite[Definition 3.1]{fm1} the definition of the
$K$-process, a Markov process on $\overline {\bb N}$, the one-point
compactification of $\bb N$, characterized by two parameters: $c\ge 0$
and a sequence $\{\gamma_i >0: i\ge 1\}$ such that $\sum_{i\ge 1}
\gamma_i <\infty$. While $\gamma_i^{-1}$ represents the rate at which
the Markov process leaves $i$, $c$ is related to the behavior of the
process at the extra point added in the compactification.

Denote by $\{Z^M_t : t\ge 0\}$ the Markov process with generator $\mf
L_M$. Fontes and Mathieu \cite[Lemma 3.11]{fm1} proved that the
process $Z^M_t$ converges, as $M\uparrow\infty$, to the $K$-process
with parameters $c= 0$ and $\{ \hat w_i/v_d: i\ge 1\}$. Next result
follows from this fact and from Theorem \ref{mt4}.

\begin{theorem}
\label{mt2}
Fix $T>0$ and assume that $d\ge 3$. There exists a sequence
$\{\ell^*_N : N\ge 1\}$, $\ell^*_N\uparrow\infty$, such that for any
sequence $\{\ell_N : N\ge 1\}$, $\ell_N \le \ell^*_N$,
$\ell_N\uparrow\infty$, the law of $\{X^{N,\ell_N}_t: 0\le t\le T\}$
converges in distribution to the $K$-process with parameters $\{\hat
w_i/v_d : i\ge 1\}$ and $c=0$. Moreover,
\begin{equation*}
\lim_{N\to\infty} \max_{1\le j\le \ell_N} \bb E^N_{x^N_j} \big[ \mc
T^{\Delta_{N,\ell_N}}_N(T) \big] \;=\;0\;.
\end{equation*}
\end{theorem}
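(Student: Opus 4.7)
The plan is to deduce Theorem \ref{mt2} from Theorem \ref{mt4} together with Lemma 3.11 of \cite{fm1} by a diagonal extraction. Theorem \ref{mt4} supplies the inner limit $N\to\infty$ for fixed $M$ (convergence in distribution of the trace process $X^{N,M}$ to the jump process $Z^M$ with generator $\mf L_M$, together with the negligibility of the time spent in $\Delta_{N,M}$), while Lemma 3.11 of \cite{fm1} supplies the outer limit $Z^M \Rightarrow K$, where $K$ is the $K$-process with parameters $\{\hat w_i/v_d:i\ge 1\}$ and $c=0$. It remains only to combine these two limits into a single limit with $M=\ell_N\to\infty$ slowly enough.

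First I would embed all state spaces in the one-point compactification $\overline{\bb N}$ so that the processes $X^{N,M}$, $Z^M$ and $K$ all live in the common Skorohod space $D([0,T],\overline{\bb N})$, and fix a complete separable metric $d$ metrizing weak convergence on the space of probability measures on this path space (for instance the L\'evy--Prokhorov distance). By Theorem \ref{mt4}, I can pick an increasing sequence $\{N_M\}_{M\ge 1}$ such that, for every $N\ge N_M$,
\begin{equation*}
d\bigl(\text{law}(X^{N,M}),\text{law}(Z^M)\bigr)\;<\;\frac 1M,\qquad \max_{1\le j\le M}\bb E^N_{x^N_j}\bigl[\mc T^{\Delta_{N,M}}_N(T)\bigr]\;<\;\frac 1M.
\end{equation*}
Setting $\ell^*_N:=\max\{M\ge 1:N_M\le N\}$ produces a nondecreasing sequence with $\ell^*_N\uparrow\infty$.

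For any $\ell_N\le\ell^*_N$ with $\ell_N\uparrow\infty$, both displayed inequalities hold with $M=\ell_N$. The second yields the ``moreover'' statement directly, while the first combined with Lemma 3.11 of \cite{fm1} and the triangle inequality gives
\begin{equation*}
d\bigl(\text{law}(X^{N,\ell_N}),\text{law}(K)\bigr)\;\le\;\frac 1{\ell_N}\;+\;d\bigl(\text{law}(Z^{\ell_N}),\text{law}(K)\bigr)\;\longrightarrow\;0,
\end{equation*}
establishing the convergence of $X^{N,\ell_N}$ to the $K$-process. The argument is essentially soft; the only technical issue is bookkeeping, namely verifying that the weak convergence of Theorem \ref{mt4} takes place on a path space into which the Fontes--Mathieu process can be embedded (immediate since $\overline{\bb N}$ is compact) and that the initial distributions match (automatic from the identification $x^N_j\to\hat x_j$ introduced at the start of Section \ref{ss21}). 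No additional tightness or rate-of-convergence estimate beyond what the two cited results supply is needed.
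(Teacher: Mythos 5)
Your argument is correct, and for the first assertion it is essentially the paper's proof: the paper also performs a diagonal extraction combining Proposition \ref{s04b} with Lemma 3.11 of \cite{fm1}, except that it realizes $\bb X^{N,M}$, $\bb Z^M$ and $\bb Z$ on a common probability space and works with the Skorohod distance in probability, whereas you metrize weak convergence on $D([0,T],\overline{\bb N})$ directly; the two devices are interchangeable here. Where you genuinely diverge is the ``moreover'' statement. You fold it into the same diagonal extraction, enlarging $N_M$ so that the bound on $\max_{1\le j\le M}\bb E^N_{x^N_j}[\mc T^{\Delta_{N,M}}_N(T)]$ also holds for $N\ge N_M$ and then setting $M=\ell_N$. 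The paper instead proves the second statement for \emph{every} sequence $\ell_N\uparrow\infty$, independently of the choice of $\ell^*_N$, via the pathwise decomposition $\mc T^{\Delta_{N,\ell_N}}_t\le H(A^N_M)+\mc T^{\Delta_{N,M}}_t\circ\theta(H(A^N_M))$ for a fixed $M\le\ell_N$, bounding $\bb E^N_{x^N_j}[H(A^N_M)]$ for $M<j\le\ell_N$ through \eqref{f06}, the lower bound on $\Cap(x^N_j,A^N_M)$ from Corollary \ref{aug3}, and the fact that $\nu^N(\Delta_{N,M})$ vanishes. Your route is shorter and suffices for the theorem as stated (which only quantifies over $\ell_N\le\ell^*_N$); the paper's buys an unconditional version of the second claim. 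One small inaccuracy to repair: Theorem \ref{mt4} gives only $\lim_M\limsup_N\max_{1\le j\le M}\bb E^N_{x^N_j}[\mc T^{\Delta_{N,M}}_N(T)]=0$, an iterated limit, so for fixed $M$ you cannot choose $N_M$ making this quantity smaller than $1/M$; you can only make it smaller than $a_M+1/M$ with $a_M:=\limsup_N\max_{1\le j\le M}\bb E^N_{x^N_j}[\mc T^{\Delta_{N,M}}_N(T)]\to 0$, which is all the diagonal argument needs.
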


Of course, a similar statement holds in dimension $2$, i.e., for $\mf
L^\star_M$ in place of $\mf L_M$.  In the terminology of Definition
2.1 in \cite{bl1}, Theorem \ref{mt2} states that in dimension $d\ge 3$
the trap random walk $\{X^N_t : t\ge 0\}$ is metastable with
metastates $\{\hat x_1, \hat x_2, \dots \}$ and limit given by the
$K$-process with parameters $\{\hat w_i/v_d : i\ge 1\}$ and
$c=0$. Analogously, in dimension $2$, the trap random walk $\{\mf
X^N_t : t\ge 0\}$ is metastable with metastates $\{\hat x_1, \hat x_2,
\dots \}$ and limit given by the $K$-process with parameters $\{2 \hat
w_i/\pi : i\ge 1\}$ and $c=0$.

\subsection{Bouchaud's trap model}
\label{ss12}

In this subsection we present an example of a random measure $W$ which
satisfies almost surely assumption ({\bf H1}). Fix $0<\alpha <1$ and
let $\lambda$ be the measure on $\bb T^d \times (0,\infty)$ given by
$\lambda = \alpha w^{-(1+\alpha)} dx\, dw$. Since $\lambda$ is a
positive Radon measure, the Poisson point process $\Gamma$ of
intensity $\lambda$ is well defined. Let $\{(x_i,w_i) : i\ge 1\}$ be
the Poisson marks and define the measure $W$ by
\begin{equation*}
W \;=\; \sum_{i\ge 1} w_i \, \delta_{x_i}\;.
\end{equation*}

Note that $W(\bb T^d)$ is $a.s.$ finite. On the one hand, the random
variable $\Gamma(\bb T^d \times (1,\infty))$ has finite mean which
implies that there are only a finite number of Poisson marks on $\bb
T^d\times [1,\infty)$. On the other hand, $\sum_{i\ge 1} w_i \mb
1\{w_i\le 1\}$ has finite expectation. Note also that $W(A)$, $W(B)$
are independent if $A$ and $B$ are disjoints.

Denote by $|A|$ the Lebesgue measure of a measurable set $A \subseteq
\bb T^d$. A simple computation shows that the random variable $W(A)$
has an $\alpha$-stable distribution for any $A$ with $|A|>0$.  In
particular, the random measure $W$ is self-similar with index
$\alpha/d$ in the sense that the distributions of $W(\beta A)$ and
$\beta^{d/\alpha}W(A)$ are the same for any $\beta \in (0,1)$ and any
measurable set $A \subseteq \bb T^d$.

We call the random measure $W(dx)$ a $d$-dimensional subordinator of
index $\alpha$. For $x \in \bb T^d_N$, define
\begin{equation*}
\tau_x^N = N^{d/\alpha}W_x^N \; .
\end{equation*}
Since $W(dx)$ is self-similar with index $\alpha/d$, $\{\tau_x^N;x \in
\bb T^d_N\}$ is a sequence of i.i.d. random variables with common
$\alpha$-stable distribution $\zeta = W(\bb T^d)$ which does not
depend on $N$.

Fontes, Isopi, Newman version of the Bouchaud trap model is the
symmetric, nearest-neighbor, continuous-time random walk on $\bb
T^d_N$ with generator $\mc L_N$ in which $W^N_x$ is replaced by
$\tau_x^N = N^{d/\alpha}W_x^N$:
\begin{equation*}
(\mc L^\tau_N f)(x) \;=\; \frac 1{2d} \, \frac 1{\tau^N_x} \, 
\sum_{y\sim x} [f(y) - f(x)]\; .
\end{equation*}

In dimension $1$, the generator on $\Omega_N$ corresponding to the
superposition of independent random walks is given by
\begin{equation*}
(L^\tau_N f) (\eta) =  \frac 12 \, \sum_{x\in \bb T_N} \sum_{y \sim x} 
\frac{\eta(x)}{\tau^N_x} \big[f(\eta^{x,y})-f(\eta)\big]\;.
\end{equation*}  
Denote by $\{\eta^\tau_t : t\ge 0\}$ the Markov process with generator
$L^\tau_N$ \emph{speeded up} by $N^{1+(1/\alpha)}$ and denote by
$\pi^{N,\tau}_t$ the empirical measure associated to the configuration
$\eta^\tau_t$ by formula \eqref{f03}. Observe that the time scaling is
subdiffusive.

We show below in \eqref{f16} that assumption ({\bf H1}) is in force
almost surely. Moreover, if the Markov process starts from
$\mu^N_{u_0(\cdot)}$, for some continuous function $u_0:\bb T\to \bb
R_+$, by Theorem \ref{t2.1}, for almost all measures $W$, for all
$t\ge 0$, the random measure $\pi^{N,\tau}_t$ converges in probability
to the measure $u(t,x) W(dx)$, where $u$ is the unique weak solution
of \eqref{ec2}. Note that the noise $W$ survives entirely in the
limit, even the differential equation depends on $W$.

%Theo mt2-4
\medskip

We conclude this section showing that assumption ({\bf H1}) is in
force for $\gamma_0 > (1/\alpha) -1$. Indeed, with the notation
introduced above, assumption ({\bf H1}) can be restated as
\begin{equation}
\label{f16}
\frac{1}{N^{2+\gamma-(1/\alpha)}} \sum_{x \in \bb T_N}
\frac{1}{\tau_x^N} \;\longrightarrow\; 0 \quad \text{a.s.}
\end{equation}

It is well known that $1/\tau_x^N$ has finite moments of any order.
Denote by $m_1$ the expectation of $1/\tau_x^N$. The variance of the
previous sum is equal to $N^{-\{3 + 2\gamma - (2/\alpha)\}} \sigma^2$,
for some finite constant $\sigma^2$.  Therefore, by Chebyshev's
inequality, for every $\epsilon >0$,
\begin{equation*}
P\Big[\, \Big|\frac{1}{N^{\{2+\gamma-(1/\alpha)\}}} \sum_{x \in \bb T_N}
\big\{ \frac{1}{\tau_x^N} - m_1 \big\}\, \Big| \;\geq\; \epsilon \Big] 
\;\leq\; \frac{\sigma^2}{\epsilon^2 N^{3 + 2\gamma -
    (2/\alpha)}}\;\cdot 
\end{equation*}
Taking $\epsilon = N^{-\delta}$, for $\delta>0$ small enough, it
follows from Borel-Cantelli that the sum in \eqref{f16} vanishes
a.s.\!  provided $\gamma > (1/\alpha) - 1$.

\section{Proof of the hydrodynamic limit}
\label{s2}

In this section we prove Theorem \ref{t2.1}. Fix $T>0$ and denote by
$\mc M([0,T]\times \bb T)$ the space of finite, positive measures on
$[0,T]\times \bb T$, endowed with the weak topology. For each $N\ge
1$, consider the measure $\mf M^N$ on $[0,T]\times \bb T$ defined by
\begin{equation*}
\mf M^N \;=\; \int_0^T \frac 1{N^{1+\gamma}} \sum_{x\in \bb T_N}
\frac{\eta_t(x)}{W^N_x} \, \delta_{x/N} \, dt \;.
\end{equation*}
Hence, if we denote by $\ll \mf M^N, H\gg$ the integral of a continuous
function $H:[0,T]\times \bb T \to \bb R$ with respect to $\mf M^N$, we
have that
\begin{equation*}
\ll \mf M^N, H\gg \;=\; \int_0^T \frac 1{N^{1+\gamma}} \sum_{x\in\bb T_N}
H(t,x/N) \, \frac{\eta_t(x)}{W^N_x}  \, dt\;.
\end{equation*}

Let $D([0,T], \mc M)$ be the space of right continuous trajectories
$\pi: [0,T]\to \mc M$ with left limits, endowed with the Skorohod
topology.  Fix a continuous function $u_0:\bb T\to \bb R_+$. Let $\mc
Q_N$, $N\ge 1$, be the probability measure on $D([0,T], \mc M) \times
\mc M([0,T]\times \bb T)$ induced by the initial distribution
$\mu^N_{u_0(\cdot)}$ and the pair $(\{\pi^N_t : 0\le t\le T\} , \mf
M^N)$: $\mc Q_N = \bb P_{\mu^N_{u_0(\cdot)}} \circ (\{\pi^N_t : 0\le
t\le T\} , \mf M^N)^{-1}$.  We prove in Lemma \ref{s08} below that the
sequence $\{\mc Q_N : N\ge 1\}$ is tight for the uniform topology in
the first variable, and, in Subsection \ref{ss20}, that all limit
points of the sequence $\{\mc Q_N : N\ge 1\}$ are concentrated on
measures $(\{\pi_t : 0\le t\le T\} , \mf M)$ whose first coordinate is
absolutely continuous with respect to $W$, $\pi(t,dx) = v(t,x) W(dx)$,
and whose density $v_t$ is a weak solution of the hydrodynamic
equation (\ref{ec2}). Since, by Theorem \ref{t2}, there is at most one
weak solution, for each $0\le t\le T$, $\pi^N_t$ converges weakly to
$v(t,x) W(dx)$, where $v_t$ is the unique weak solution of
\eqref{ec2}, as claimed in Theorem \ref{t2.1}.

\subsection{Entropy estimates}
\label{ss21b}

Recall from \cite[Section A1.8]{kl} the definition of the relative
entropy $H(\lambda|\mu)$ of a probability measure $\lambda$ with
respect to another probability measure $\mu$ defined on the same
space, as well as its explicit formula presented in \cite[Theorem
A1.8.3]{kl}.  An elementary computation shows that there exists a
finite constant $K_0$ such that
\begin{equation}
\label{f08}
H(\mu^N_{u_0(\cdot)}|\mu_\rho) \;\le\; K_0 N^\gamma
\end{equation}
for all $N\ge 1$. In fact $N^{-\gamma} H(\mu^N_{u_0(\cdot)}|\mu_\rho)$
converges to $\int \{ u_0(x) \log [ u_0(x)/\rho ] - [u_0(x) - \rho] \}
W(dx)$ as $N\uparrow\infty$.

Denote by $\< \cdot, \cdot \>_{\mu_\rho}$
the scalar product of $L^2(\mu_\rho)$ and denote by $I^W_N$ the
convex and lower semicontinuous \cite[Corollary A1.10.3]{kl}
functional defined by
\begin{equation*}
I^W_N (f) \;=\; \< -L_N \sqrt f \,,\, \sqrt f\>_{\mu_\rho}\; ,
\end{equation*}
for all probability densities $f$ with respect to $\mu_\rho$ (i.e.,
$f\ge 0$ and $\int f d\mu_\rho =1$). An elementary computation shows
that 
\begin{eqnarray*}
\!\!\!\!\!\!\!\!\!\!\!\!\!\! &&
I^W_N (f) \;=\; \sum_{x\in \bb T_N} I^W_{x,x+1} (f)\;,
\quad\text{where}\quad \\
\!\!\!\!\!\!\!\!\!\!\!\!\!\! && \qquad
I^W_{x,x+1} (f) \;=\;  \frac 1{2N} \int \frac {\eta(x)}{W^N_x}
\, \big\{ \sqrt{f(\eta^{x,x+1})} -
\sqrt{f(\eta)} \big\}^2 \, d\mu_\rho  \;.
\end{eqnarray*}
By \cite[Theorem A1.9.2]{kl}, if $\{S^N_t : t\ge 0\}$ stands
for the semi-group associated to the generator $N^2L_N$, for all $t\ge
0$, 
\begin{equation}
\label{f01}
H_N (\mu^N_{u_0(\cdot)} S^N_t | \mu_\rho) \; +\; N^2 \, \int_0^t 
I^W_N (f^N_s) \, ds  \;\le\; H_N (\mu^N_{u_0(\cdot)} | \mu_\rho)\;,
\end{equation}
provided $f^N_s$ stands for the Radon-Nikodym derivative of
$\mu^N_{u_0(\cdot)} S^N_s$ with respect to $\mu_\rho$.

\subsection{Attractiveness and coupling estimates}
\label{ss22}

Let $(\Omega,\mc F, P)$ be a probability space and let $\preceq$ be a
partial order in $\Omega$. We say that a function $f: \Omega \to \bb
R$ is {\em increasing} if $f(\eta) \leq f(\xi)$ whenever $\eta \preceq
\xi$. Let $\lambda$, $\mu$ be two probability measures in $\Omega$. We
say that $\lambda$ is {\em stochastically dominated} by $\mu$ if $\int
f d\lambda \leq \int f d\mu$ for any increasing bounded function $f:
\Omega \to \bb R$. An equivalent definition is the following. We say
that a probability measure $\Lambda$ defined in $\Omega \times \Omega$
is a {\em coupling} of $\lambda$ and $\mu$ if $\Lambda(A \times
\Omega) = \lambda(A)$, $\Lambda(\Omega \times A)=\mu(A)$ for any $A
\in \mc F$. The measure $\lambda$ is stochastically dominated by $\mu$
if there is a coupling $\Lambda$ of $\lambda$ and $\mu$ such that
$\Lambda((\eta,\xi); \eta \preceq \xi)=1$.

We say that a stochastic process $\eta_t$ defined in $\Omega$ is {\em
  attractive} if for any two probability measures $\lambda_1 \preceq
\lambda_2$ there is a process $(\eta_t^1,\eta_t^2)$ in $\Omega \times
\Omega$ such that $\eta_t^i$ is distributed as the process $\eta_t$
with initial distribution $\mu_i$ for $i=1,2$, and such that
$P(\eta_t^1 \preceq \eta_t^2) =1$ for any $t \geq 0$. We call the
process $(\eta_t^1,\eta_t^2)$ a {\em coupling}.

In $\Omega_N$, we say that $\eta \preceq \xi$ if $\eta(x) \leq \xi(x)$
for any $x \in \bb T_N$. For this partial order, it is easy to see
that $\eta_t$ is attractive. Indeed, since the state space is finite,
it is enough to show the existence of a coupling for measures $\mu_i$
concentrated on fixed configurations $\eta^i$, with $\eta^1 \preceq
\eta^2$. Define $\eta_t^1$ as the process $\eta_t$ with initial
configuration $\eta^1$.  Then, define the process $\bar \eta_t$ as a
copy of the process $\eta_t$, independent of $\eta_t^1$, and starting
from $\bar \eta$, where $\bar \eta(x) = \eta^2(x) -\eta^1(x)$. Now
define $\eta_t^2$ by taking $\eta_t^2(x) = \eta_t^1(x) +\bar
\eta_t(x)$.  Since the motion of different particles is independent,
it is clear that $(\eta_t^1,\eta_t^2)$ is the desired coupling as, by
construction, $\eta_t^1 \preceq \eta_t^2$ for any $t \geq 0$.

In terms of stochastic domination, the definition of attractiveness
reads as follows. If $\lambda^1$ is stochastically dominated by
$\lambda^2$, then $\lambda_t^1$ is stochastically dominated by
$\lambda_t^2$ for any time $t \geq 0$, where $\lambda_t^i$ denotes the
distribution in $\Omega_N$ of the process $\eta_t$ with initial
distribution $\lambda^i$. In particular, we obtain the following
inequality, which we call the {\em coupling estimate}:

\begin{proposition}
\label{s04}
Let $\lambda^1$, $\lambda^2$ be two probability measure on $\Omega_N$.
If $\lambda^1$ is stochastically dominated by $\lambda^2$, then
\begin{equation*}
\bb E_{\lambda^1}[F(\eta_t)] \leq \bb E_{\lambda^2} [F(\eta_t)]
\end{equation*}
for any $t \geq 0$ and any bounded increasing function $F: \Omega_N
\to \bb R$.
\end{proposition}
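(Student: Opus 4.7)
The plan is to combine two couplings: one at the level of initial distributions (coming from stochastic domination), and one at the level of trajectories (the attractive coupling already described in the paragraph preceding the proposition). Since the assertion is entirely standard for attractive systems, there is no serious obstacle; the work is bookkeeping.

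First, by the equivalent characterization of stochastic domination recalled in the text, there exists a probability measure $\Lambda$ on $\Omega_N\times\Omega_N$ with marginals $\lambda^1$ and $\lambda^2$ and such that $\Lambda(\{(\eta,\xi):\eta\preceq\xi\})=1$. I would then lift this initial coupling to a coupling at the level of the whole evolution. Concretely, for each fixed pair $(\eta,\xi)$ with $\eta\preceq\xi$, I would apply the construction already given in the excerpt: let $\{\eta_t^1 : t\ge 0\}$ be the process starting from $\eta$, let $\{\bar\eta_t : t\ge 0\}$ be an independent copy of the process starting from the configuration $\bar\eta(x)=\xi(x)-\eta(x)$, and set $\eta_t^2(x)=\eta_t^1(x)+\bar\eta_t(x)$. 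Since particles evolve independently under the generator $L_N$, the process $\{\eta_t^2\}$ has the same law as the process starting from $\xi$, and by construction $\eta_t^1\preceq\eta_t^2$ for every $t\ge 0$.

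Next, I would integrate this trajectory-coupling against $\Lambda$ to produce a single probability measure $\bb Q$ on $D(\bb R_+,\Omega_N)\times D(\bb R_+,\Omega_N)$ whose marginals are the laws of the process with initial distributions $\lambda^1$ and $\lambda^2$, and which is concentrated on pairs of paths $(\eta^1_\cdot,\eta^2_\cdot)$ with $\eta_t^1\preceq\eta_t^2$ for all $t\ge 0$. Finally, for any bounded increasing $F:\Omega_N\to\bb R$, monotonicity gives $F(\eta_t^1)\le F(\eta_t^2)$ $\bb Q$-a.s., and integrating yields
\begin{equation*}
\bb E_{\lambda^1}[F(\eta_t)] \;=\; \bb E^{\bb Q}[F(\eta_t^1)] \;\le\; \bb E^{\bb Q}[F(\eta_t^2)] \;=\; \bb E_{\lambda^2}[F(\eta_t)],
\end{equation*}
which is the desired inequality.

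The only point that requires any care is the measurability of the coupling construction in the initial data $(\eta,\xi)$, so that the integration against $\Lambda$ is well-defined. This is immediate here because $\Omega_N$ is countable (in fact the state space is discrete and $\lambda^i$ are probability measures on a countable set), so one may simply write $\bb Q$ as a countable mixture $\bb Q=\sum_{(\eta,\xi)}\Lambda(\eta,\xi)\,\bb Q_{\eta,\xi}$ of the per-pair trajectory couplings $\bb Q_{\eta,\xi}$, avoiding any selection issue.
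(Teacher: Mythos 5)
Your proposal is correct and follows essentially the same route as the paper: the authors derive the proposition directly from the attractiveness of the dynamics, established via the same independent-particle coupling $\eta_t^2(x)=\eta_t^1(x)+\bar\eta_t(x)$ lifted over a monotone coupling of the initial distributions. Your extra remark on measurability (handled by the countable mixture over pairs $(\eta,\xi)$) is a harmless refinement of the same argument.
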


Now we need a criterion to decide whether an initial distribution is
stochastically dominated by another one. In $\bb N_0$, consider the
canonical ordering. It is easy to show that $\mf P_{\rho_1}$ is
stochastically dominated by $\mf P_{\rho_2}$ whenever $\rho_1 \leq
\rho_2$. Since the measures $ \mu_\rho$ are of product form, $
\mu_{\rho_1}$ is stochastically dominated by $ \mu_{\rho_2}$ each time
$\rho_1 \leq \rho_2$. More interesting for us, we have the following.

\begin{proposition}
\label{s03}
Fix an initial bounded non-negative profile $u_0: \bb T \to \bb
R_+$. Define $\bar \rho = ||u_0||_\infty$. Then, $\mu^N_{u_0(\cdot)}$ is
stochastically dominated by $\mu_{\bar \rho}$ for any $N >0$. In
particular,
\begin{equation*}
\bb E_{\mu^N_{u_0(\cdot)}}[F(\eta_t)] \;\leq\; 
\bb E_{\mu_{\bar \rho}}[F(\eta_t)]
\end{equation*}
for any $t\ge 0$ and for any increasing bounded function $F: \Omega_N
\to \bb R$.
\end{proposition}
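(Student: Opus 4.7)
The plan is to verify componentwise stochastic domination at the level of the initial distributions and then invoke the attractiveness already established in Proposition \ref{s04}.

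First, I would observe that both $\mu^N_{u_0(\cdot)}$ and $\mu_{\bar\rho}$ are product measures on $\Omega_N = \bb N_0^{\bb T_N}$. Their marginal at a site $x$ is, respectively, $\mf P_{u_0(x/N) N^\gamma W^N_x}$ and $\mf P_{\bar\rho\, N^\gamma W^N_x}$. By definition of $\bar\rho = \|u_0\|_\infty$, the first Poisson parameter is bounded by the second at every site.

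Next, I would exploit the elementary fact (already recalled in the paragraph preceding the proposition) that $\mf P_{\rho_1}$ is stochastically dominated by $\mf P_{\rho_2}$ when $\rho_1 \le \rho_2$, realized by the Poisson thinning coupling: if $Y_x \sim \mf P_{\bar\rho N^\gamma W^N_x}$ is decomposed as $Y_x = Y_x^{(1)} + Y_x^{(2)}$ with $Y_x^{(1)} \sim \mf P_{u_0(x/N) N^\gamma W^N_x}$ and $Y_x^{(2)} \sim \mf P_{[\bar\rho - u_0(x/N)] N^\gamma W^N_x}$ independent, then $Y_x^{(1)} \le Y_x$ almost surely. Perform this construction independently across the sites $x \in \bb T_N$. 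The resulting coupling $\Lambda$ on $\Omega_N \times \Omega_N$ has the prescribed marginals $\mu^N_{u_0(\cdot)}$ and $\mu_{\bar\rho}$ by the product structure, and $\Lambda\{(\eta,\xi) : \eta \preceq \xi\} = 1$, proving the first assertion of the proposition.

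Finally, the expectation inequality is an immediate consequence: the stochastic domination $\mu^N_{u_0(\cdot)} \preceq \mu_{\bar\rho}$ combined with Proposition \ref{s04} (applied to $\lambda^1 = \mu^N_{u_0(\cdot)}$, $\lambda^2 = \mu_{\bar\rho}$) gives $\bb E_{\mu^N_{u_0(\cdot)}}[F(\eta_t)] \le \bb E_{\mu_{\bar\rho}}[F(\eta_t)]$ for every bounded increasing $F$ and every $t \ge 0$. There is essentially no obstacle here, since both ingredients — the monotonicity of the Poisson family in its parameter and the attractiveness of $\eta_t$ — have already been set up in the excerpt; the only verification is the routine observation that independent couplings at each site assemble into a coupling of the full product measures.
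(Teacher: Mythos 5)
Your proof is correct and follows the same route the paper intends: the paper gives no separate proof of Proposition \ref{s03}, relying instead on the remark immediately preceding it that $\mf P_{\rho_1}\preceq\mf P_{\rho_2}$ for $\rho_1\le\rho_2$ together with the product structure of the measures, and then on the coupling estimate of Proposition \ref{s04}. Your explicit Poisson thinning coupling at each site, assembled independently over $x\in\bb T_N$, is exactly the standard realization of that domination, so there is nothing to add.
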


The coupling shows that $\pi^N_t$, $\mf M^N$ converge to measures
which are absolutely continuous with respect to $W$, the Lebesgue
measures, respectively:

\begin{lemma}
\label{s01}
Every limit point $\mc Q^*$ of the sequence $\mc Q_N$ is concentrated
on measures $\pi(t,dx) = u(t,x)\, W(dx)$ (resp. $\mf M (dt,dx) =
v(t,x) dt dx$) which are absolutely continuous with respect to $W$
(resp. the Lebesgue measure) and whose density $u(t,x)$
(resp. $v(t,x)$) is positive and bounded by $\Vert u_0\Vert_\infty$.
\end{lemma}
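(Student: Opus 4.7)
The plan is to use the attractiveness coupling just established (Propositions \ref{s03}--\ref{s04}) to dominate the process started from $\mu^N_{u_0(\cdot)}$ by the process in equilibrium under the invariant Poisson product measure $\mu_{\bar\rho}$ with $\bar\rho = \|u_0\|_\infty$. Since $\mu_{\bar\rho}$ is invariant, $\eta_t$ has law $\mu_{\bar\rho}$ at every $t$, which reduces the required moment computations to Poisson calculations.

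For a non-negative continuous $H$, a direct computation gives
\begin{equation*}
E_{\mu_{\bar\rho}}\bigl[\<\pi^N_t, H\>\bigr] \;=\; \bar\rho\sum_{x \in \bb T_N} H(x/N)\, W^N_x \;\longrightarrow\; \bar\rho\!\int_{\bb T} H\, dW,
\end{equation*}
with a variance of order $N^{-\gamma}\to 0$ coming from the Poisson moments; likewise $\ll\mf M^N, G\gg$ converges in $L^2(\mu_{\bar\rho})$ to $\bar\rho\int_0^T\!\int G(t,x)\, dx\, dt$ for continuous $G \ge 0$. Truncating $\<\pi^N_t, H\>\wedge K$ (a bounded, increasing function of $\eta$) and applying Proposition \ref{s04} together with Proposition \ref{s03}, then letting $K \uparrow \infty$ by monotone convergence, I would obtain
\begin{equation*}
\bb P_{\mu^N_{u_0(\cdot)}}\!\Bigl[\<\pi^N_t, H\> > \bar\rho\!\int_{\bb T} H\, dW + \epsilon\Bigr]
\;\le\; \bb P_{\mu_{\bar\rho}}\!\Bigl[\<\pi^N_t, H\> > \bar\rho\!\int_{\bb T} H\, dW + \epsilon\Bigr]
\;\longrightarrow\; 0
\end{equation*}
for every $\epsilon>0$, and the analogous bound for $\ll\mf M^N, G\gg$.

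Passing to an arbitrary weak limit point $\mc Q^*$ via Portmanteau at continuity times of the trajectory $\pi_\cdot$, and letting $H$, $G$ range over countable dense subsets of the non-negative continuous test functions, together with right-continuity of $t \mapsto \pi_t$ in $\mc M(\bb T)$, one concludes $\mc Q^*$-a.s.\ that $\pi(t,dx)\le \bar\rho\, W(dx)$ for every $t\in [0,T]$ and $\mf M(dt,dx) \le \bar\rho\, dt\, dx$. The Radon--Nikodym theorem then produces the desired densities $u,v$ taking values in $[0,\bar\rho]$, non-negativity being automatic from $\eta \ge 0$.

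The step I expect to demand the most care is the reduction from the coupling estimate of Proposition \ref{s04}, which is phrased for bounded increasing functionals, to the unbounded linear statistics $\<\pi^N_t, H\>$ and $\ll\mf M^N, G\gg$; this is precisely what the $\wedge K$ truncation is designed to absorb. A secondary subtlety is that the evaluation $\pi_\cdot \mapsto \pi_t$ is only $\mc Q^*$-a.s.\ continuous at $t$ when $t$ is a continuity point of the càdlàg limit, so the passage to $\mc Q^*$ has to be carried out first on a dense set of times and then extended to all $t \in [0,T]$ by right-continuity.
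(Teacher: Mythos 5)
Your proposal is correct and follows essentially the same route as the paper: stochastic domination of $\mu^N_{u_0(\cdot)}$ by the invariant product measure $\mu_{\bar\rho}$, a second-moment (Chebyshev) computation under the stationary Poisson measure, and passage to the limit over a dense family of non-negative test functions. The one point you should make explicit is that the variance of $\ll \mf M^N, G\gg$ under $\mu_{\bar\rho}$ is of order $N^{-(2+\gamma)}\sum_{x} G^2/W^N_x$, so assumption ({\bf H1}) is precisely what makes your ``likewise'' $L^2$-convergence for $\mf M^N$ true, whereas the bound for $\<\pi^N_t,H\>$ needs no such hypothesis.
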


\begin{proof}
Fix a limit point $\mc Q^*$ of the sequence $\mc Q_N$ and assume,
without loss of generality, that $\mc Q_N$ converges to $\mc Q^*$ (in
the uniform topology on the first coordinate).  Fix a continuous,
positive function $G: [0,T]\times \bb T \to \bb R$, $\varepsilon >0$
and recall that $\bar\rho = \Vert u_0\Vert_\infty$. By the previous
proposition,
\begin{equation*}
\begin{split}
& \bb P_{\mu^N_{u_0(\cdot)}} \Big[ \ll \mf M, G\gg \;\ge\; 
\bar\rho \int_0^Tdt  \int_{\bb T} G(t,x) dx  + \varepsilon \Big] \\
& \qquad \;\le\; 
\bb P_{\mu_{\bar \rho}} \Big[ \ll \mf M, G \gg  \;\ge\; \bar\rho 
\int_0^Tdt \int_{\bb T} G(t,x) dx +  \varepsilon \Big]
\end{split}
\end{equation*}
for every $N\ge 1$. We may replace the integral $\int_{\bb T} G(t, x)
du$ by the Riemann sum because $G$ is continuous. Thus, for $N$ large
enough, the previous expression is bounded above by
\begin{equation*}
\bb P_{\mu_{\bar \rho}} \Big[  \int_0^T \frac 1{N^{1+\gamma}} 
\sum_{x\in \bb T_N} G(t,x/N) \,
\Big\{ \frac{\eta_t(x)}{W^N_x} - \bar\rho \, N^{\gamma} \Big\}\, dt
\;\ge\; \varepsilon/2 \Big]\;.
\end{equation*}
By Chebyshev and by Schwarz inequalities, since $\mu_{\bar \rho}$ is a
stationary state given by a product of Poisson measures, this
expression is less than or equal to
\begin{equation*}
\frac {4T}{\varepsilon^2 } \int_0^T \frac {\bar\rho}{N^{2 + \gamma}} 
\sum_{x\in \bb T_N} G(t,x/N)^2 \frac 1{W^N_x}\, dt \; .
\end{equation*}
In view of assumption ({\bf H1}), this expression vanishes as
$N\uparrow\infty$ because $G$ is a continuous bounded function. 

Since $\mc Q_N$ converges to $\mc Q^*$, for every $\varepsilon>0$, 
\begin{equation*}
\mc Q^* \Big[ \ll \mf M, G \gg \;\ge\; 
\bar\rho \int_0^Tdt \int_{\bb T} G(t,x)\,
dx  + \varepsilon \Big] \;=\; 0\;.
\end{equation*}
Letting $\varepsilon \downarrow 0$, we conclude that $\mc Q^*$ is
concentrated on measures $\mf M$ such that $\ll \mf M, G \gg \le
\bar\rho \int_0^Tdt \int_{\bb T} G(t,x) \, dx$. Taking a set $\{G_k :
k\ge 1\}$ of positive, bounded, continuous functions dense for the
uniform topology, we conclude that $\mc Q^*$ is concentrated on
absolutely continuous measures $\mf M(dt, dx) = v(t,x) dt \, dx$,
whose density $v(t,x)$ is bounded by $\bar\rho$.

A similar coupling argument shows that for every $0\le t\le T$ and
every  continuous, positive function $H:\bb T\to \bb R$,
\begin{equation*}
\lim_{N\to\infty}
\bb P_{\mu^N_{u_0(\cdot)}} \Big[ \< \pi^N_t , H\> \;\ge\; 
\bar\rho \int_{\bb T} H(x) \, W(dx)  + \varepsilon \Big] \;=\;0\;.
\end{equation*}
Since we assumed compactness in the uniform topology, we deduce from
this formula that
\begin{equation*}
\mc Q^* \Big[ \< \pi_t , H\> \;\ge\; 
\bar\rho \int_{\bb T} H(x) \, W(dx)  + \varepsilon \Big] \;=\;0\;.
\end{equation*}
It remains to recall the arguments presented for $\mf M$ to conclude
the proof.
\end{proof}

\subsection{Hydrodynamic limit}
\label{ss20}

We prove in this subsection Theorem \ref{t2.1}.

\begin{theorem}
\label{s09}
The sequence of probability measures $\{\mc Q_N : N\ge 1\}$ converges
to the measure $\mc Q^*$ concentrated on the absolutely continuous
pair $(\{\pi_t :0\le t\le T\}, \mf M)$, $\pi_t = v(t,x) W(dx)$, $\mf M
= v(t,x) \, dt\, dx$, whose density $v(t,x)$ is the weak solution of
the equation \eqref{ec2}.
\end{theorem}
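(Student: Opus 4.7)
The plan is to test the empirical measure against smooth $G:[0,T]\times\bb T\to \bb R$ with $G_T=0$, pass to the limit in the resulting Dynkin identity, and close the argument by combining this with an energy estimate extracted from the entropy production \eqref{f01} and with the uniqueness Theorem \ref{t2}. By the tightness of $\{\mc Q_N\}$ (Lemma \ref{s08}) and the absolute continuity established in Lemma \ref{s01}, every limit point $\mc Q^*$ is concentrated on pairs $(\{\pi_t\}, \mf M)$ of the form $\pi_t = u(t,\cdot)\, W$ and $\mf M = w(t,x)\, dt\, dx$ with $u,w \le \|u_0\|_\infty$. What remains is to show that $u=w$, that this common density lies in $L^2((0,T);\mc H_1)$, and that it satisfies the weak formulation of \eqref{ec2}.

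\textbf{Martingale identity.} For a smooth $G$ with $G_T=0$, Dynkin's formula yields the martingale
\begin{equation*}
M^N_t \;=\; \<\pi^N_t, G_t\> - \<\pi^N_0, G_0\> - \int_0^t \<\pi^N_s, \partial_s G_s\>\, ds - \int_0^t \mc I^N_s(\eta_s)\, ds,
\end{equation*}
where a direct computation gives
\begin{equation*}
\mc I^N_s(\eta) \;=\; \frac{1}{2 N^{1+\gamma}}\sum_{x\in\bb T_N}\frac{\eta(x)}{W^N_x}\, N^2\big[G_s(\tfrac{x+1}{N}) + G_s(\tfrac{x-1}{N}) - 2G_s(\tfrac{x}{N})\big].
\end{equation*}
By Taylor expansion the bracketed factor equals $(\partial_x^2 G_s)(x/N) + O(N^{-2})$, so up to a negligible remainder this equals $\tfrac12 \ll\mf M^N,\partial_x^2 G\gg$; the dominating Poisson measure $\mu_{\bar\rho}$ from Proposition \ref{s03} combined with ({\bf H1}) bounds the error in expectation. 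The quadratic variation of $M^N$ is controlled in the same way, giving $\bb E[(M^N_T)^2]\to 0$. Setting $t=T$ and passing to the limit $\mc Q_N\to\mc Q^*$ produces the identity
\begin{equation*}
\<G_0, u_0\>_W + \int_0^T \<\partial_s G_s, u_s\>_W\, ds + \frac{1}{2}\int_0^T\!\!\int_{\bb T}(\partial_x^2 G_s)(x)\, w(s,x)\, dx\, ds \;=\; 0.
\end{equation*}

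\textbf{Energy and replacement.} The entropy inequality \eqref{f01} together with \eqref{f08} yields $\int_0^T I^W_N(f^N_s)\, ds \le K_0 N^{\gamma-2}$. Rewriting this via the explicit form of $I^W_N$ gives an $L^2$-control of discrete gradients of the density which, in the limit, produces a weak-gradient bound ensuring $\int_0^T\<u_s, u_s\>_{1,2}\, ds < \infty$. The same Dirichlet-form bound drives a mesoscopic replacement lemma (in the spirit of Chapter 5 of \cite{kl}), which identifies, in the integrated sense that enters the equation, the density $w$ of $\mf M$ with respect to Lebesgue with the density $u$ of $\pi_t$ with respect to $W$; this is the statement $u=w$. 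Integration by parts then transforms $\int(\partial_x^2 G_s)\, u_s\, dx$ into $-\int(\partial_x G_s)(\partial_x u_s)\, dx$, recovering exactly the weak formulation in Definition \ref{s11}.

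\textbf{Conclusion and main obstacle.} Uniqueness (Theorem \ref{t2}) then forces $\mc Q^*$ to be concentrated on the announced pair, and the whole sequence $\mc Q_N$ converges to it. I expect the replacement step $u=w$ to be the main obstacle: because $W$ may be singular and is in fact atomic in the Bouchaud example, the usual one-block and two-block arguments cannot be phrased pointwise in $x$ but only in integrated form, and the mesoscopic box size must be tuned against the entropy budget $N^\gamma$ of \eqref{f08} together with the $N^{\gamma-2}$ Dirichlet-form budget obtained above.
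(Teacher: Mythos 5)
Your proposal is correct and follows essentially the same route as the paper: the Dynkin martingale with the discrete Laplacian rewritten as $\tfrac12\ll\mf M^N,\Delta_N G\gg$, an energy estimate extracted from the entropy production to justify the integration by parts, a two-blocks/replacement argument to identify the density of $\mf M$ with that of $\pi_t$, and the uniqueness theorem to conclude convergence of the whole sequence. You also correctly single out the identification $u=w$ (the paper's Lemma \ref{s06}, proved via the integrated two-blocks estimate of Corollary \ref{s10}) as the step where the singularity of $W$ forces the argument to be carried out in averaged rather than pointwise form.
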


\begin{proof}
By Lemma \ref{s08} below, the sequence $\{\mc Q_N : N\ge 1\}$ is
tight. Fix a limit point $\mc Q^*$ and assume, without loss of
generality, that $\mc Q_N$ converges to $\mc Q^*$.  

Fix a smooth function $H: [0,T] \times \bb T \to \bb R$ such that
$H(T, \cdot) = 0$. Consider the martingale $M^H_N(t)$ defined by
\begin{equation}
\label{f02}
M^H_N(t) \;=\; \<\pi^N_t , H_t\>  \;-\; \<\pi^N_0 , H_0\> \;-\; 
\int_0^t \<\pi^N_s , \partial_s H_s \> \, ds
\;-\; N^2 \int_0^t L_N \<\pi^N_s , H\> \, ds \; .
\end{equation}
The variance of this martingale is equal to
\begin{equation*}
\frac{N}{2 N^{2\gamma}} \sum_{x\in \bb T_N}\sum_{y: |y-x|=1} \int_0^t
\frac{\eta_s(x)}{W^N_x} \{ H(s,y/N) - H(s,x/N)\}^2\, ds \;.
\end{equation*}
The coupling estimate shows that the expectation of this expression
with respect to $\bb P_{\mu^N_{u_0(\cdot)}}$ is bounded by $C_0
N^{-\gamma}$ for some finite constant $C_0$ which depends on $H$ and
$\bar\rho$. On the other hand, an elementary computation shows that
\begin{equation*}
N^2 \int_0^T L_N \<\pi^N_s , H\> \, ds \;=\;
\frac 12 \ll \mf M^N , \Delta_N H\gg\;,
\end{equation*}
where $\Delta_N$ stands for the discrete Laplacian. In particular, in
view of \eqref{f02} and since $H(T, \cdot)$ vanishes, for every
$\delta>0$,
\begin{equation*}
\lim_{N\to\infty} \bb P_{\mu^N_{u_0(\cdot)}} \Big[
\, \Big| \<\pi^N_0 , H_0\>  \;+\; \int_0^T \<\pi^N_s , \partial_s H_s\> \, ds
\;+\; (1/2) \ll \mf M^N , \Delta_N H \gg \, \Big| >\delta \Big]
\;=\; 0\;.
\end{equation*}

The first term of this sum converges to $\int_{\bb T} H_0(x) u_0(x)
W(dx)$ in $\bb P_{\mu^N_{u_0(\cdot)}}$-probabil\-ity, as
$N\uparrow\infty$. The last expression can be written, up to smaller
order terms, as $(1/2) \ll \mf M^N , \Delta H\gg$.  Hence, since $\mc
Q^N$ converges to $\mc Q^*$, for every $\delta>0$, and every smooth
function $H$,
\begin{equation*}
\mc Q^* \Big[
\, \Big| \int_{\bb T} H_0(x) u_0(x) W(dx)  
\;+\; \int_0^T \<\pi_s , \partial_s H_s\> \, ds
\;+\; (1/2) \ll \mf M , \Delta H \gg \, \Big| >\delta \Big]
\;=\; 0\;.
\end{equation*}
Letting $\delta\downarrow 0$, by Lemma \ref{s01}, $\mc Q^*$ almost surely, 
\begin{equation*}
\begin{split}
& \int_{\bb T} H_0(x) u_0(x) W(dx)  
\;+\; \int_0^T  ds \, \int_{\bb T}(\partial_s H)(s,x)\, u(s,x) \, W(dx) \\
&\qquad \;+\; (1/2) \int_0^T ds \, \int_{\bb T} (\Delta H)  
(s,x)\, v(s,x) \, dx \;=\; 0\;.
\end{split}
\end{equation*}
According to Lemma \ref{s06}, we may replace $u$ by $v$ in the second
term. By Proposition \ref{s07}, we may integrate by parts the last
term to obtain that
\begin{equation*}
\< H_0 , u_0 \>_W  
\;+\; \int_0^T \<\partial_s H_s , u_s \>_W \, ds 
\;-\; (1/2) \int_0^T \<\partial_x H_s , \partial_x v_s \> \, ds 
\;=\; 0\;.
\end{equation*}
This proves that $\mc Q^*$ is concentrated on weak solutions of
\eqref{ec2}. By Proposition \ref{s07}, $\partial_x v$ belongs to
$L^2([0,T]\times \bb T)$ and by Lemma \ref{s01} $v$ is positive and
bounded. Since the previous identity holds for all smooth functions
$H$, $v$ is a weak solution of \eqref{ec2}.
\end{proof}

Theorem \ref{t2.1} follows from this result and the tightness in the
uniform topology of the sequence $\{\mc Q_N : N\ge 1\}$ proved in
Lemma \ref{s08} below.

\begin{lemma}
\label{s08}
The sequence $\{\mc Q_N : N\ge 1\}$ is tight in the uniform topology
in the first coordinate.
\end{lemma}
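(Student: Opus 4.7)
The plan is to reduce the joint tightness to tightness of each coordinate separately and treat them with different tools. The second coordinate $\mf M^N$ is a random finite measure on $[0,T]\times\bb T$, so its tightness in the weak topology reduces to a uniform bound on the expected total mass $E_{\mu^N_{u_0(\cdot)}}[\mf M^N([0,T]\times\bb T)]$; this bound, equal to $T\bar\rho$, follows at once from Proposition \ref{s03} together with the identity $E_{\mu_{\bar\rho}}[\eta(x)] = \bar\rho W^N_x N^{\gamma}$.

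For the first coordinate, which takes values in $D([0,T],\mc M(\bb T))$, I would prove tightness in the uniform topology in two steps: (i) tightness in the Skorohod topology, and (ii) the maximum jump size vanishes as $N\to\infty$, so that every limit point has continuous paths and Skorohod tightness is thereby upgraded to uniform tightness. Step (ii) is immediate: a single particle hop changes $\<\pi^N,H\>$ by at most $\|H\|_\infty/N^{\gamma}$.

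For (i), since $\mc M(\bb T)$ is Polish with the weak topology and the total mass is conserved by the dynamics, compact containment reduces to an $L^1$-bound on $\<\pi^N_0,1\>$ supplied by Proposition \ref{s03}. By the standard projective criterion it then suffices to show tightness of $\{\<\pi^N_\cdot,H\>\}$ in $D([0,T],\bb R)$ for every $H$ in a countable dense family of smooth functions. For each such $H$ I would use the martingale decomposition \eqref{f02}: the variance bound $C_0/N^{\gamma}$ (computed in the proof of Theorem \ref{s09}) and Doob's inequality yield $\sup_t|M^H_N(t)|\to 0$ in probability. Aldous's criterion then reduces the question to controlling, for stopping times $\tau\le T$ and $\theta\le\delta$, the integrated drift
\begin{equation*}
D^N(\tau,\tau+\theta) \;=\; \frac 12 \int_\tau^{\tau+\theta}\frac 1{N^{1+\gamma}}\sum_{x\in\bb T_N}(\Delta_N H)(x/N)\,\frac{\eta_u(x)}{W^N_x}\,du.
\end{equation*}
Splitting $\Delta_N H$ into positive and negative parts and applying Proposition \ref{s03} dominate $E|D^N(\tau,\tau+\theta)|$ by the analogous quantity under the stationary measure $\mu_{\bar\rho}$. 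A Fubini argument followed by Cauchy--Schwarz in the probability $P(\tau\le u\le\tau+\theta)$ then gives a bound of the form $C\sqrt{\theta}$, provided the integrand admits a uniform $L^2$ bound under $\mu_{\bar\rho}$; the variance contribution is precisely $\bar\rho N^{-(2+\gamma)}\sum_x (\Delta_N H)^2(x/N)/W^N_x$, which is controlled by hypothesis ({\bf H1}), while the mean piece is obviously bounded. Markov's inequality then yields the Aldous condition.

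The main obstacle is the drift term, because the factor $1/W^N_x$ is singular at sites where $W$ concentrates and no pointwise bound on the integrand is possible. The resolution, and the only place where hypothesis ({\bf H1}) enters the tightness argument, is that under any reversible Poisson measure $\mu_\rho$ the variance of $\eta(x)/W^N_x$ is proportional to $N^\gamma/W^N_x$; the $L^2$ bound on the drift integrand is exactly what ({\bf H1}) provides.
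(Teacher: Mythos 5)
Your proof is correct, and it rests on the same two pillars as the paper's: the decomposition \eqref{f02} of $\<\pi^N_t,H\>$ into a martingale (killed by its quadratic variation of order $N^{-\gamma}$ together with Doob's inequality) and an absolutely continuous drift, combined with the coupling of Proposition \ref{s03} and hypothesis ({\bf H1}), which enter exactly where you say they do, namely in the $L^2$ bound on $N^{-(1+\gamma)}\sum_x\eta_s(x)/W^N_x$ under $\mu_{\bar\rho}$. Where you diverge from the paper is in how the drift's equicontinuity is converted into uniform tightness. The paper does it in one stroke: by Cauchy--Schwarz in the time variable, $\big|\int_s^t N^2L_N\<\pi^N_r,H\>\,dr\big|^2\le\epsilon\int_0^T(\cdots)^2\,dr$ uniformly over $|t-s|\le\epsilon$, so Chebyshev bounds the uniform modulus of continuity directly by $\epsilon C_0\delta^{-2}\,\bb E[\int_0^T\{\cdots\}^2ds]$ --- no stopping times and no Skorohod detour. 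Your route through Aldous's criterion followed by the vanishing-maximal-jump upgrade is also valid: the stopping-time technicality is handled correctly by your double Cauchy--Schwarz in $P(\tau\le u\le\tau+\theta)$, which yields the $\sqrt\theta$ bound, and the jump bound $2\Vert H\Vert_\infty N^{-\gamma}$ does upgrade Skorohod tightness to uniform tightness. It is, however, longer for no gain here, since the drift is an additive functional with square-integrable density and the direct modulus estimate is available; Aldous would be the right tool only if such an $L^2$-in-time bound failed. Your treatment of the second coordinate via a uniform bound on the expected total mass is equivalent to the paper's coupling argument for $\ll\mf M^N,G\gg$, and your explicit reduction to compact containment plus a countable family of test functions fills in steps the paper leaves implicit.
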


\begin{proof}
To prove tightness of the sequence $\{\mc Q_N : N\ge 1\}$ we need to
examine the two coordinates separately.

Clearly, the sequence of random measures $\mf M^N$ is tight if and
only if the sequence of random variables $\ll \mf M^N, G\gg$ is tight
for every continuous function $G:[0,T]\times \bb T\to\bb
R$. Tightness of the sequence $\ll \mf M^N, G\gg$ follows from a
coupling argument similar to the one used in the proof of Lemma
\ref{s01}. 

To prove tightness of the sequence of processes $\{\pi^N_t : 0\le t\le
T\}$ in the uniform topology, it is enough to examine the process
$\<\pi^N_t, H\>$ for some fixed smooth function $H$. Recall the
definition of the martingale $M^H_N(t)$ introduced in
\eqref{f02}. Tightness of $\<\pi^N_t, H\>$ follows from tightness of
the martingale $M^H_N(t)$ and tightness of the additive functional
$\int_0^t N^2 L_N \<\pi^N_s , H\> \, ds$.

The martingale is tight in the uniform topology because, by Doob
inequality and by the explicit computation of the quadratic variation
of $M^H_N(t)$, for every $\delta>0$
\begin{equation*}
\lim_{N\to\infty} \bb P_{\mu^N_{u_0(\cdot)}} \Big[
\sup_{0\le t\le T} \big| M^H_N(t)  \big| >\delta \Big]
\;=\; 0\;.
\end{equation*}
On the other hand, computing $N^2 L_N \<\pi^N_r , H\> $, by Chebyshev
and Schwarz inequalities, for every $\delta>0$,
\begin{equation*}
\begin{split}
& \bb P_{\mu^N_{u_0(\cdot)}} \Big[ \sup_{0\le |t-s| \le \epsilon} 
\Big| \int_s^t N^2 L_N \<\pi^N_r , H\>
\, dr  \Big| >\delta \Big] \\
&\qquad \;\le\; \frac {\epsilon\, C_0} {\delta^2}
\, \bb E_{\mu^N_{u_0(\cdot)}} \Big[  
\int_0^T \Big\{ \frac 1{N^{1+\gamma}} \sum_{x\in\bb T_N} \frac
{\eta_s(x)}{W^N_x} \Big\}^2  \, ds \Big]
\end{split}
\end{equation*}
for some finite constant $C_0$ which depends only on $H$. By the
coupling estimate, we may replace the measure $\mu^N_{u_0(\cdot)}$ by
the stationary measure $\mu^N_{\bar \rho}$, estimating the expectation
by
\begin{equation*}
C(\bar\rho) T  \Big \{ 1 \;+\; \frac 1{N^{2+\gamma}} \sum_{x\in\bb
  T_N} \frac 1{W^N_x} \Big\}\;.
\end{equation*}
By assumption ({\bf H1}), this expression is bounded uniformly in $N$,
which concludes the proof of tightness.
\end{proof}

\section{Entropy estimates}
\label{sec3}

We prove in this section the main estimates needed in the proof of
hydrodynamic limit. For $\ell \ge 1$, let $\Lambda_\ell$ be a cube of
length $\ell$: $ \Lambda_\ell= \{1 , \dots, \ell\}$ and let
$\Lambda_{x,\ell} = x + \Lambda_\ell$. Denote by $M^\ell (x)$ the
number of particles on $\Lambda_{x,\ell}$ and by $W_N(x,\ell)$ the
$W$-measure of the cube $\Lambda_{x,\ell}$ rescaled by $N$:
\begin{equation*}
M^\ell (x) \;=\; \sum_{y\in \Lambda_\ell} \eta(x+y)\; , \quad
W_N(x,\ell) \; =\; \sum_{y\in \Lambda_\ell} W^N_{x+y}\; .
\end{equation*}
Note that $W_N(x,\epsilon N) \sim W_\epsilon(x)$.

\subsection{Two blocks estimate}
\label{ss23}

We prove in this subsection the so called two blocks estimate. 

\begin{lemma}
\label{s02}
Fix a bounded function $G: [0,T]\times \bb T \to \bb R$.
\begin{equation*}
\lim_{\epsilon \to 0} \limsup_{N\to\infty}
\bb E_{\mu^N_{u_0(\cdot)}} \Big[\,
\Big| \int_0^T  \frac 1{N^{1+\gamma}} \sum_{x\in \bb T_N} G(s,x/N) 
\Big\{ \frac{\eta_s(x)}{W^N_x}  \;-\;
\frac{M^{\epsilon N}_s(x)} {W_N(x,\epsilon N)}
\Big\} \, ds \Big| \, \Big] \;=\;0\;.
\end{equation*}
\end{lemma}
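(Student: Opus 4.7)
The plan is to follow the entropy--Dirichlet form strategy (a ``replacement lemma'' argument), exploiting the reversibility of the dynamics with respect to $\mu_\rho$ to convert the spatial average into a bond-by-bond estimate.

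\textbf{Step 1 (Entropy + Feynman--Kac reduction).} Denote by $X_N$ the random time--integral inside the absolute value. Using the elementary bound $|X_N|\le a^{-1}\log(e^{aX_N}+e^{-aX_N})$, the entropy inequality, the uniform bound $H(\mu^N_{u_0(\cdot)}\,|\,\mu_\rho)\le K_0 N^\gamma$ from \eqref{f08}, and the Feynman--Kac formula for the semigroup of $N^2L_N$ reduce the claim to showing that, for every $\delta>0$,
\begin{equation*}
\limsup_{\epsilon\to 0}\ \limsup_{N\to\infty}\ \frac{1}{N^\gamma}\int_0^T \sup_{f}\Big\{\tfrac{N^\gamma}{\delta}\!\int V^\epsilon_s(\eta)\,f\, d\mu_\rho - N^2 I^W_N(f)\Big\}\,ds \;=\;0,
\end{equation*}
where the supremum is over densities $f$ with respect to $\mu_\rho$ and
$$V^\epsilon_s(\eta)\;=\;\frac{1}{N^{1+\gamma}}\sum_{x\in\bb T_N} G(s,x/N)\,\Big[\frac{\eta(x)}{W^N_x}-\frac{M^{\epsilon N}(x)}{W_N(x,\epsilon N)}\Big].$$

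\textbf{Step 2 (Bond identity and telescoping).} The Poisson--shift identity $\int\eta(z)g(\eta)\,d\mu_\rho=\rho W^N_z\!\int g(\eta+e_z)\,d\mu_\rho$ together with reversibility yield
$$\int \frac{\eta(z)}{W^N_z}\,f(\eta^{z,z+1})\,d\mu_\rho \;=\;\int \frac{\eta(z+1)}{W^N_{z+1}}\,f(\eta)\,d\mu_\rho,$$
whence the key integration--by--parts identity
$$\int\Big[\frac{\eta(z)}{W^N_z}-\frac{\eta(z+1)}{W^N_{z+1}}\Big]\,f\, d\mu_\rho \;=\;\int \frac{\eta(z)}{W^N_z}\,\bigl[f(\eta)-f(\eta^{z,z+1})\bigr]\,d\mu_\rho.$$
Factoring $f-f^{z,z+1}=(\sqrt f-\sqrt{f^{z,z+1}})(\sqrt f+\sqrt{f^{z,z+1}})$ and applying Cauchy--Schwarz in the weighted space $L^2(\eta(z)W^N_z{}^{-1}d\mu_\rho)$ gives
$$\Big|\int \tfrac{\eta(z)}{W^N_z}\bigl[f(\eta)-f(\eta^{z,z+1})\bigr]\,d\mu_\rho\Big|\;\le\;\sqrt{2N\,I^W_{z,z+1}(f)}\cdot\mc R_z(f)^{1/2},$$
where $\mc R_z(f)\le 2\int\!\big[\eta(z)/W^N_z+\eta(z+1)/W^N_{z+1}\big]f\,d\mu_\rho$ after invoking the reversibility identity once more on the $f(\eta^{z,z+1})$ piece. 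On the side of the statement, the algebraic identity
$$\frac{\eta(x)}{W^N_x}-\frac{M^{\epsilon N}(x)}{W_N(x,\epsilon N)}\;=\;\frac{1}{W_N(x,\epsilon N)}\sum_{w=x}^{x+\epsilon N-2}\! c^\epsilon_{x,w}\,\Big[\frac{\eta(w)}{W^N_w}-\frac{\eta(w+1)}{W^N_{w+1}}\Big],$$
with $c^\epsilon_{x,w}=W_N(w+1,\,x+\epsilon N-w-1)\le W_N(x,\epsilon N)$, telescopes the target into a weighted sum of these bond gradients.

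\textbf{Step 3 (Conclusion).} Substituting the bond bound of Step~2 into the telescoping formula, exchanging the order of summation (each bond $(w,w+1)$ appears in the sum for at most $\epsilon N$ values of $x$), and applying Cauchy--Schwarz to the sum over bonds yields an inequality of the form
$$\Big|\!\int V^\epsilon_s f\, d\mu_\rho\Big|\;\le\;C\,\|G\|_\infty\,\epsilon\,N^{1-\gamma}\sqrt{I^W_N(f)}\cdot\bigl(1+o(1)\bigr),$$
where the $o(1)$ absorbs the $\mc R_z$ factor using the normalization $\int f\,d\mu_\rho=1$, together with assumption \textbf{(H1)}. Plugging this into the variational problem of Step~1 and applying Young's inequality $ab\le \alpha a^2+b^2/(4\alpha)$ with $\alpha=1$ absorbs $N^2 I^W_N(f)$ completely and leaves a deterministic remainder of order $\epsilon^2/N^\gamma$, which vanishes first as $N\to\infty$ and then as $\epsilon\to 0$.

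\textbf{Main obstacle.} The delicate point is controlling the ``second factor'' $\mc R_z(f)$ arising from Cauchy--Schwarz, because the weights $\eta(z)/W^N_z$ are unbounded and the atomic nature of $W$ may make $W^N_x$ very small at many sites. The reversibility identity of Step~2 keeps the argument in terms of $f$ rather than $f(\eta^{z,z+1})$, and it is precisely here that hypothesis \textbf{(H1)} on $\sum_x 1/W^N_x$ is used to bound the residual term uniformly in $f$.
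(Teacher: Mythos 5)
Your overall strategy (entropy inequality plus Feynman--Kac, telescoping the block difference into bond gradients, the reversibility identity, and Cauchy--Schwarz against the Dirichlet form) is exactly the route the paper takes. But there is a genuine gap at the point where you dispose of the second Cauchy--Schwarz factor. You claim that $\mc R_z(f)\le 2\int\big[\eta(z)/W^N_z+\eta(z+1)/W^N_{z+1}\big]f\,d\mu_\rho$ can be ``absorbed using the normalization $\int f\,d\mu_\rho=1$, together with assumption (H1).'' No such uniform bound exists: the observable $\eta(z)$ is unbounded, so $\sup_f\int\eta(z)f\,d\mu_\rho=+\infty$ over densities $f$ with respect to $\mu_\rho$, and (H1) is a deterministic condition on the environment that cannot compensate for this. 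Consequently the inequality you assert in Step~3, $|\int V^\epsilon_s f\,d\mu_\rho|\le C\|G\|_\infty\,\epsilon\,N^{1-\gamma}\sqrt{I^W_N(f)}\,(1+o(1))$ uniformly in $f$, is not available, and the variational problem of Step~1 cannot be closed as written.

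The missing device is the compensator. The paper subtracts from the exponent, already at the entropy-inequality stage, the functional $R_\epsilon(s,\eta)=C_1\epsilon N^{-(2+\gamma)}\sum_x G(s,x/N)^2\sum_{y=0}^{\epsilon N}\eta(x+y)/W^N_{x+y}$, and proves the pointwise variational inequality $\int V^0_\epsilon f\,d\mu_\rho\le\int R_\epsilon f\,d\mu_\rho+\delta K_0^{-1}N^{2-\gamma}I^W_N(f)$: after Cauchy--Schwarz with parameter $\beta=2\epsilon A/N$, the quadratic residual is dominated term by term by $\int R_\epsilon f\,d\mu_\rho$ (choosing $C_1\ge 4A$), so it never needs to be bounded uniformly in $f$. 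The price is the extra term $\bb E_{\mu^N_{u_0(\cdot)}}[\int_0^T R_\epsilon(s,\eta_s)\,ds]$, which is controlled \emph{under the actual law of the process} by the attractiveness/coupling estimate (Proposition \ref{s04}), yielding $O(\epsilon^2)$ --- a bound that is uniform in $N$ but only vanishes as $\epsilon\to 0$. This also explains why your claimed remainder of order $\epsilon^2/N^\gamma$ cannot be right: if the error vanished as $N\to\infty$ for fixed $\epsilon$, the outer limit $\epsilon\to 0$ in the statement would be superfluous, whereas in the correct bookkeeping the factor $\epsilon$ gained from $\beta\propto\epsilon/N$ is exactly what is spent on the compensator, leaving an $O(\epsilon^2)$ error independent of $N$. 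Everything else in your sketch (the telescoping coefficients $c^\epsilon_{x,w}$, the Poisson-shift identity, the bond-counting in the exchange of summation) matches the paper's computation.
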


\begin{proof}
Fix a bounded function $G: [0,T]\times \bb T \to \bb R$, $\delta >0$
and a positive constant $C_1 = C_1(\delta)$ to be specified later.
Let
\begin{eqnarray*}
\!\!\!\!\!\!\!\!\!\!\!\!\! &&
V^0_\epsilon (s,\eta) \;=\;  
\frac {1}{N^{1+\gamma}} \sum_{x\in \bb T_N} G(s,x/N) 
\Big\{ \frac{\eta (x)}{W^N_x}  \;-\;
\frac{M^{\epsilon N} (x)} {W_N(x,\epsilon N)} \Big\} \; , \\
\!\!\!\!\!\!\!\!\!\!\!\!\! && \quad
R_\epsilon (s,\eta) \;=\; \frac {C_1 \epsilon}{N^{2+\gamma}} 
\sum_{x\in \bb T_N} G(s,x/N)^2 
\sum_{y=0}^{\epsilon N} \frac{\eta(x+y)}{W^N_{x+y}} \; \cdot
\end{eqnarray*}

By the coupling estimate,  
\begin{equation}
\label{f13}
\bb E_{\mu^N_{u_0(\cdot)}} \Big[\, \int_0^T R_\epsilon (s,\eta_s) ds\,
\Big] \; \le\; C_0 \epsilon^2 \, T
\end{equation}
for some finite constant $C_0$ depending only on $C_1$, $G$,
$\rho$. It is therefore enough to prove that
\begin{equation*}
\lim_{\epsilon \to 0} \limsup_{N\to\infty} 
\bb E_{\mu^N_{u_0(\cdot)}} \Big[\,
\Big| \int_0^T V^0_\epsilon(s,\eta_s) \, ds \Big| 
\;-\; \int_0^T R_\epsilon(s,\eta_s) \, ds \, \Big] \;=\;0\;.
\end{equation*}

By the entropy inequality, Jensen inequality and the entropy estimate
\eqref{f08}, the previous expectation is bounded above by 
\begin{equation*}
\frac {K_0}A \;+\; \frac 1{AN^\gamma}
\log \, \bb E_{\mu_{\rho}} \Big[ \exp AN^\gamma  \Big\{ \Big| 
\int_0^T V_\epsilon^0 (s,\eta_s) \, ds \Big| 
\;-\; \int_0^T R_\epsilon(s,\eta_s) \, ds \Big\} \, \Big]
\end{equation*}
for every positive $A>0$.

Let $A = K_0 \delta^{-1}$. Since $e^{|x|} \le e^x + e^{-x}$ and since
$\limsup_{n \to \infty} N^{-\gamma} \log \big\{ a^1_N + a^2_N \big\}$
$= \max_{i=1,2}$ $\limsup_{n \to \infty} N^{-\gamma} \log a_N^i$, to
prove the lemma it is enough to show that for every $\delta>0$,
\begin{equation*}
\lim_{\epsilon \to 0} \limsup_{n \to \infty} \frac{1}{A N^\gamma} 
\log \, \bb E_{\mu_{\rho}} \Big[ \exp\Big\{ 
AN^\gamma \int_0^T V_\epsilon(s,\eta_s) \, ds \Big\} \, \Big] \;\leq\; 0\;,
\end{equation*}
where $V_\epsilon = V_\epsilon^0 - R_\epsilon$.

By classical arguments, relying on Feynman-Kac's formula
(cf. \cite[p. 267]{kl}), the previous expectation is bounded above by
\begin{equation*}
\int_0^T \sup_{f} \Big\{  \int V_\epsilon(s,\eta) f(\eta)
\mu_{\rho} (d\eta)  - \frac{N^2}{A N^\gamma} I^W_N(f) \Big\}\, ds \;,
\end{equation*}
where the supremum is carried over all densities $f$ with respect to
$\mu_{\rho}$. Hence, to conclude the proof of the lemma, it is enough
to show that
\begin{equation}
\label{f07}
\int V^0_\epsilon(s,\eta) f(\eta) \mu_{\rho} (d\eta) 
\;\le\; \int R_\epsilon(s,\eta) f(\eta) \mu_{\rho} (d\eta)
\; +\; \frac{\delta N^2}{K_0 N^\gamma} I^W_N(f)
\end{equation}
for every density function $f$ and every $\delta>0$.

Recall the definition of $W_N(x,\epsilon N)$ to rewrite
$V^0_\epsilon(s,\eta)$ as 
\begin{eqnarray*}
\frac 1{N^{1+\gamma}} \sum_{x\in \bb T_N} G(s,x/N) 
\sum_{y=1}^ {\epsilon N}  \frac{W^N_{x+y}}{W_N(x,\epsilon N)} 
\Big\{ \frac{\eta(x)}{W^N_x}
-  \frac{\eta(x+y)}{W^N_{x+y}} \Big\} \; . 
\end{eqnarray*}
Fix a density $f$ with respect to $\mu_\rho$.  Performing a simple
change of variables, we see that
\begin{eqnarray*}
\int \Big\{ \frac{\eta(x)}{W_x^N} - \frac{\eta(x+y)}{W_{x+y}^N} \Big\} \,
f\, d\mu_\rho &=& \sum_{z=0}^{y-1}\int \Big\{ 
\frac{\eta(x+z)}{W_{x+z}^N} - \frac{\eta(x+z+1)}{W_{x+z+1}^N}  \Big\} \,
f\, d\mu_\rho \\
&=& \sum_{z=0}^{y-1}\int \frac{\eta(x+z)}{W_{x+z}^N} 
\big\{ f(\eta) - f(\sigma^{x+z, x+z+1} \eta)\big\} \, d\mu_\rho\;.
\end{eqnarray*}
Since $(a-b) = (\sqrt a - \sqrt b)(\sqrt a + \sqrt b)$, by Schwarz
inequality, the previous expression is less than or equal to
\begin{eqnarray*}
\frac {N}{\beta} \sum_{z=0}^{y-1} I^W_{x+z, x+z+1} (f) \;+\;
\frac \beta 2 \sum_{z=0}^{y-1} \int \frac{\eta(x+z)}{W_{x+z}^N} 
\Big\{ \sqrt{f(\sigma^{x+z,x+z+1} \eta)} + \sqrt {f(\eta)} \Big\}^2
\, d\mu_\rho
\end{eqnarray*}
for all $\beta>0$. The same change of variables permit to estimate the
second term as
\begin{eqnarray*}
\beta \, \sum_{z=0}^{y-1} \int \Big\{ \frac{\eta(x+z)}{W_{x+z}^N}
+ \frac{\eta(x+z+1)}{W_{x+z+1}^N} \Big\} \, f(\eta)
\, d\mu_\rho\;.
\end{eqnarray*}

It follows from the previous estimates that for any density $f$ with
respect to $\mu_{\rho}$, and all $\beta>0$,
\begin{eqnarray}
\label{f05}
\!\!\!\!\!\!\!\!\!\!\!\!\!\!\!\! &&
\int V^0_\epsilon(s,\eta) f(\eta) \mu_{\rho} (d\eta) \;\le\;
\frac 1{\beta N^{\gamma}} \sum_{x\in \bb T_N} 
\sum_{y=1}^{\epsilon N} \frac{W^N_{x+y}}{W_N(x,\epsilon N)} 
\sum_{z=0}^{y-1} I^W_{x+z, x+z+1}(f) \\
\!\!\!\!\!\!\!\!\!\!\!\!\!\!\!\! &&
\quad +\; \frac {2 \beta}{N^{1+\gamma}} \sum_{x\in \bb T_N} 
G(s,x/N)^2  \sum_{y=1}^{\epsilon N} \frac{W^N_{x+y}}{W_N(x,\epsilon N)} 
\sum_{z=0}^{y} \int \frac{\eta(x+z)}{W_{x+z}^N} \, f(\eta)
\, d\mu_\rho \;. 
\nonumber
\end{eqnarray}

We examine each term on the right hand side separately. Set $\beta=
2 \epsilon N^{-1} A$.  Changing the order of summation, we obtain that
the second term is less than or equal to
\begin{eqnarray*}
\frac {4 \epsilon A}{N^{2+ \gamma}} \sum_{x\in \bb T_N} 
G(s,x/N)^2  \int \sum_{y=0}^{\epsilon N} \frac{\eta(x+y)}{W_{x+y}^N}
\, f(\eta) \, d\mu_\rho\; .
\end{eqnarray*}
This expression is bounded by the expectation of $R_\epsilon$ with
respect to $f(\eta) \, d\mu_\rho$ provide we choose $C_1 \ge 4A = 4
K_0 \delta^{-1}$. By similar reasons, the first term on the right hand
side of \eqref{f05} is bounded above by
\begin{eqnarray*}
\frac {N(\epsilon N +1)} {2 A \epsilon N^{\gamma}} \sum_{z\in \bb T_N} 
I^W_{z, z + 1}(f)\;.
\end{eqnarray*}
Hence, \eqref{f07} holds and the lemma is proved.
\end{proof}

Consider a sequence $\{G_{N, \epsilon} : N\ge 1, \epsilon >0\}$ of
functions $G_{N, \epsilon}: [0,T] \times \bb T_N \to \bb R$. In the
proof of the two blocks estimate, the boundedness assumption on $G$
was used only at \eqref{f13}.  In particular, the proof presented
above shows that
\begin{equation}
\label{f12}
\begin{split}
& \lim_{\epsilon \to 0} \limsup_{N\to\infty} \\
& \quad \bb E_{\mu^N_{u_0(\cdot)}} \Big[\,
\Big| \int_0^T 
\frac 1{N^{1+\gamma}} \sum_{x\in \bb T_N} 
G_{N, \epsilon} (s, x) \Big\{ 
\frac{M^{\epsilon N}_s(x)} {W_N(x,\epsilon N)} \;-\;
\frac{\eta_s(x)}{W^N_x} \Big\} \, ds \Big| \, \Big] \;=\;0\;.
\end{split}
\end{equation}
provided
\begin{equation*}
\lim_{\epsilon \to 0} \limsup_{N\to\infty} \int_0^T 
\frac {\epsilon^2} {N} \sum_{x\in \bb T_N} 
G_{N, \epsilon} (s, x)^2  \, ds \;=\;0\;.
\end{equation*}

Recall that $W_\epsilon :\bb T\to\bb R$ is defined by $W_\epsilon
(x) = W([x,x+ \epsilon])$.

\begin{corollary}
\label{s10}
Let $J: [0,T] \times \bb T \to \bb R$ be a continuous function. Then,
\begin{equation*}
\lim_{\epsilon \to 0} \limsup_{N\to\infty}
\bb E_{\mu^N_{u_0(\cdot)}} \Big[\, \Big| \int_0^T  \<\pi^N_s , J_s\> \, ds 
\;-\; \ll \mf M^N , J \, \epsilon^{-1} \, W_\epsilon \gg  \Big| 
\, \Big] \;=\;0\;.
\end{equation*}
\end{corollary}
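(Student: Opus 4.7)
The plan is to introduce an intermediate object $I_3^N := \int_0^T (\epsilon N^{1+\gamma})^{-1} \sum_x J(s,x/N) M^{\epsilon N}_s(x)\,ds$ and show separately that (a) $\int_0^T \<\pi^N_s,J_s\>\,ds - I_3^N \to 0$ in $L^1(\bb P_{\mu^N_{u_0(\cdot)}})$ and (b) $I_3^N - \ll\mf M^N, J\epsilon^{-1}W_\epsilon\gg \to 0$ in $L^1$, in both cases in the iterated limit $N\to\infty$ followed by $\epsilon\to 0$. Part (a) is a continuity-of-$J$ argument; part (b) is a repackaging of the extended two blocks estimate \eqref{f12}.

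For (a), the change of variables $z=x+y$ in the definition of $I_3^N$ gives
\[
I_3^N \;=\; \int_0^T \frac{1}{N^\gamma} \sum_{z\in\bb T_N}\eta_s(z)\,\Big(\frac{1}{\epsilon N}\sum_{y=1}^{\epsilon N} J(s,(z-y)/N)\Big)\,ds,
\]
whose inner Ces\`aro average differs from $J(s,z/N)$ by at most the modulus of continuity $\omega_J(\epsilon)$. Hence $|I_3^N - \int_0^T\<\pi^N_s,J_s\>\,ds| \le \omega_J(\epsilon)\int_0^T \<\pi^N_s,1\>\,ds$. Since $\<\pi^N_s,1\>$ is increasing in the configuration, Proposition \ref{s03} together with the stationarity of $\mu_{\bar\rho}$ bounds its expectation uniformly by $\bar\rho W(\bb T)$, yielding $O(\omega_J(\epsilon))\to 0$.

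For (b), expanding $W_\epsilon(x/N)=\sum_{y=0}^{\epsilon N-1} W^N_{x+y}$ and reorganizing,
\[
\ll\mf M^N,J\epsilon^{-1}W_\epsilon\gg - I_3^N \;=\; \int_0^T \frac{1}{\epsilon N^{1+\gamma}}\sum_x J(s,x/N)\sum_{y=1}^{\epsilon N} W^N_{x+y}\Big[\frac{\eta_s(x)}{W^N_x}-\frac{\eta_s(x+y)}{W^N_{x+y}}\Big]ds \;+\; \mc E_N,
\]
where $\mc E_N = \int_0^T(\epsilon N^{1+\gamma})^{-1}\sum_x J(s,x/N)[\eta_s(x)-\eta_s(x+\epsilon N)]\,ds$ accounts for the $1/N$-offset identity $W_\epsilon(x/N)-W_N(x,\epsilon N)=W^N_x-W^N_{x+\epsilon N}$. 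A crude triangle inequality combined with the coupling estimate gives $\bb E|\mc E_N|\le 2T\Vert J\Vert_\infty\,\bar\rho\, W(\bb T)/(\epsilon N)\to 0$ as $N\to\infty$ at fixed $\epsilon$. By definition of $W_N(x,\epsilon N)$, the remaining displayed integral equals $\int_0^T N^{-1-\gamma}\sum_x G_{N,\epsilon}(s,x)\,[\eta_s(x)/W^N_x - M^{\epsilon N}_s(x)/W_N(x,\epsilon N)]\,ds$ for the weight $G_{N,\epsilon}(s,x)=J(s,x/N)\epsilon^{-1}W_N(x,\epsilon N)$, and thus is exactly of the form to which \eqref{f12} applies.

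The one remaining check, and the main technical point of the proof, is the hypothesis of \eqref{f12}, namely $\lim_{\epsilon\to 0}\limsup_N \int_0^T \epsilon^2 N^{-1}\sum_x G_{N,\epsilon}(s,x)^2\,ds = 0$. Since $W_N(x,\epsilon N)$ is the $W$-mass of an interval of Lebesgue length $\epsilon$ on the torus, Fubini yields $\sum_x W_N(x,\epsilon N)\le \epsilon N\,W(\bb T)$, and combined with the uniform bound $W_N(x,\epsilon N)\le W(\bb T)$ this gives $N^{-1}\sum_x W_N(x,\epsilon N)^2 \le \epsilon\, W(\bb T)^2\to 0$ with $\epsilon$. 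This $L^2$-averaging estimate, which crucially uses that $W$ is a finite measure on $\bb T$, is the only input beyond bookkeeping.
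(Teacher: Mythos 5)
Your proof is correct and follows essentially the same route as the paper: the same intermediate quantity $I_3^N$ obtained from the continuity of $J$ together with the coupling estimate, and the same application of the extended two-blocks estimate \eqref{f12} with the weight $G_{N,\epsilon}(s,x)=J(s,x/N)\,\epsilon^{-1}W_N(x,\epsilon N)$. The only (harmless) deviations are that you check the hypothesis of \eqref{f12} by the direct discrete bound $N^{-1}\sum_x W_N(x,\epsilon N)^2\le \epsilon\, W(\bb T)^2$, where the paper instead passes to the limit $\int_{\bb T}W_{2\epsilon}(x)^2\,dx$ and uses that $W_{2\epsilon}(x)\to 0$ for Lebesgue-a.e.\ $x$, and that your displayed formula for $\mc E_N$ should carry the term $W^N_{x+\epsilon N}\,\eta_s(x)/W^N_x$ rather than $\eta_s(x+\epsilon N)$ (or, equivalently, the inner sum of the main term should stop at $\epsilon N-1$) --- in either version the coupling estimate gives the same $O(1/(\epsilon N))$ bound, so the conclusion is unaffected.
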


\begin{proof}
Since $J$ is a continuous function,
\begin{equation*}
\<\pi^N_s , J_s\> \;-\; \frac 1{N^\gamma} \sum_{x\in \bb
  T_N} \eta_s(x) \, \frac 1{(\epsilon N)} 
\sum_{-y \in \Lambda_{-x,\epsilon N}} J(s,y/N) 
\end{equation*}
is absolutely bounded by $C(\epsilon) N^{-\gamma} \sum_{x\in \bb
  T_N} \eta(x)$ for some finite constant $C(\epsilon)$ which
vanishes as $\epsilon\downarrow 0$. In particular, by the usual
coupling estimate and changing the order of summation, we get that
\begin{equation*}
\begin{split}
& \lim_{\epsilon \to 0} \, \sup_{N\ge 1}  \\
& \quad \bb E_{\mu^N_{u_0(\cdot)}} \Big[
\, \Big| \int_0^T \<\pi^N_s , J_s\> \, ds
\;-\; \int_0^T  \frac {\epsilon^{-1}}{N^{1+\gamma}}  
\sum_{x\in \bb T_N} J(s, x/N) M^{\epsilon N}_s(x)\, ds\, \Big| 
\, \Big] \;=\; 0\;.
\end{split}
\end{equation*}
 
The second term inside the absolute value can be rewritten as
\begin{equation*}
\frac 1{N^{1+\gamma}} \sum_{x\in \bb T_N} J(s, x/N) 
\epsilon^{-1} W_N(x,\epsilon N) \frac{M^{\epsilon N}_s(x)}
{W_N(x,\epsilon N)} \;\cdot
\end{equation*}
Let $G_{N,\epsilon} (s,x/N) = J(s, x/N) \epsilon^{-1} W_N(x,\epsilon
N)$. Since $J$ is a bounded function, by definition of $W_N(x,\epsilon
N)$,
\begin{equation*}
\int_0^T \frac {\epsilon^2} {N} \sum_{x\in \bb T_N} 
G_{N, \epsilon} (s, x)^2  \, ds \;\le\; 
\frac {C_0 T} {N} \sum_{x\in \bb T_N} W_{2\epsilon} (x/N)^2
\end{equation*}
for some finite constant $C_0$ which depends only on $J$. As
$N\uparrow\infty$ this expression converges to 
\begin{equation*}
C_0 T \int_{\bb T} W_{2\epsilon} (x)^2\, dx\;.
\end{equation*}
For Lebesgue almost all $x$, $W_{2\epsilon} (x)$ vanishes as $\epsilon
\downarrow 0$. Therefore, by \eqref{f12},
\begin{equation*}
\begin{split}
& \lim_{\epsilon \to 0} \limsup_{N\to\infty} \\
& \bb E_{\mu^N_{u_0(\cdot)}} \Big[\,
\Big| \int_0^T 
\frac {\epsilon^{-1}}{N^{1+\gamma}} \sum_{x\in \bb T_N} 
J(s, x/N)  W_N(x,\epsilon N) \Big\{ 
\frac{M^{\epsilon N}_s(x)} {W_N(x,\epsilon N)} \;-\;
\frac{\eta_s(x)}{W^N_x} \Big\} \, ds \Big| \, \Big] \;=\;0\;.
\end{split}
\end{equation*}
Finally, since 
\begin{equation*}
\frac 1{N^{d}} \sum_{x\in \bb T_N} \Big\vert W_N(x,\epsilon N)
- W_{\epsilon} (x/N)\Big\vert
\end{equation*}
vanishes as $N\uparrow\infty$, we may replace $W_N(x,\epsilon N)$ by
$W_{\epsilon} (x/N)$ to conclude the proof of the corollary.
\end{proof}

\subsection{Energy estimate}
\label{ss27}

We proved in Lemma \ref{s01} that any limit point $\mc Q^*$ of the
sequence $\{\mc Q_N : N\ge 1\}$ is concentrated on measures $\mf M$
which are absolutely continuous with respect to the Lebesgue measures:
$\mf M= v(t,x) dt\, dx$. We show in this section that the density $v$
has a generalized space derivative in $L^2([0,T]\times \bb T)$.

\begin{proposition}
\label{s07} 
Any limit point $\mc Q^*$ of the sequence $\{\mc Q_N : N\ge 1\}$ is
concentrated on measures $\mf M = v(t,x) dt\, dx$ with the property
that there exists a function $F$ in $L^2([0,T]\times \bb T)$ such that
\begin{equation*}
\int_0^T ds\, \int_{\bb T} (\partial_{x} H)(s,x)
v(s,x) \, dx \; =\; - \int_0^T ds\, \int_{\bb T} 
H(s,x) F(s,x) \, dx
\end{equation*}
for all smooth functions $H$. We denote the generalized derivative
$F$ of $v$ by $\partial_{x} v$.
\end{proposition}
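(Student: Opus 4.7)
\noindent\emph{Proof plan.} The plan is to prove that, $\mc Q^*$-almost surely, the linear functional $L(H):=-\int_0^T\!\!\int_{\bb T}(\partial_x H)(s,x)\,v(s,x)\,dx\,ds$, defined on smooth $H:[0,T]\times\bb T\to\bb R$, satisfies $|L(H)|\le C_0^{1/2}\,\|H\|_{L^2}$ with a constant $C_0$ independent of $H$. Once this is obtained, $L$ extends by density to a continuous linear functional on $L^2([0,T]\times\bb T)$, and the Riesz representation theorem supplies the $L^2$ function $F$ claimed in the proposition.

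To derive the estimate, fix a countable dense family $\{H_k\}_{k\ge 1}$ of smooth functions, and for $K\ge 1$ consider
\begin{equation*}
\Xi_N^K \;=\; \max_{1\le k\le K}\Big\{\ll\mf M^N,\partial_x H_k\gg \;-\; C_0\,\|H_k\|_{L^2}^2\Big\}\, .
\end{equation*}
By the entropy inequality with constant $A=N^\gamma$, the entropy bound \eqref{f08}, and $e^{\max_k\phi_k}\le\sum_{k\le K}e^{\phi_k}$,
\begin{equation*}
\bb E_{\mu^N_{u_0(\cdot)}}\big[\Xi_N^K\big] \;\le\; K_0 \;+\; \frac{\log K}{N^\gamma} \;+\; \max_{1\le k\le K}\frac{1}{N^\gamma}\log\bb E_{\mu_\rho}\Big[\exp\big\{N^\gamma(\ll\mf M^N,\partial_x H_k\gg - C_0\|H_k\|_{L^2}^2)\big\}\Big]\, .
\end{equation*}
The key reduction is to show that, for a suitable $C_0$ and each smooth $H$,
\begin{equation*}
\limsup_{N\to\infty}\,\frac{1}{N^\gamma}\log\bb E_{\mu_\rho}\Big[\exp\big\{N^\gamma\ll\mf M^N,\partial_x H\gg\big\}\Big] \;\le\; C_0\,\|H\|_{L^2}^2\, .
\end{equation*}
Granted this, sending $N\to\infty$ and then $K\to\infty$ and invoking the weak convergence of $\mc Q_N$ to $\mc Q^*$ (the map $\mf M\mapsto\ll\mf M,\partial_x H_k\gg$ is weakly continuous since $\partial_x H_k$ is bounded and continuous) together with Fatou's lemma yields $\bb E_{\mc Q^*}[\sup_k\{L(H_k)-C_0\|H_k\|_{L^2}^2\}]\le K_0$, which is the desired bound.

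To prove the key reduction, Feynman--Kac applied to the process with generator $N^2 L_N$ bounds the exponential moment by the time integral of
\begin{equation*}
\sup_{f\text{ density}}\Big\{\int V_H(s,\eta)\,f\,d\mu_\rho \;-\; N^{2-\gamma}\,I_N^W(f)\Big\}\, ,\qquad V_H(s,\eta) \;=\; \frac{1}{N^{1+\gamma}}\sum_x(\partial_x H)(s,x/N)\,\frac{\eta(x)}{W_x^N}\, .
\end{equation*}
A discrete summation by parts recasts $V_H$ as the sum of bond terms $-N^{-\gamma}H(s,x/N)[\eta(x)/W^N_x-\eta(x-1)/W^N_{x-1}]$, plus an error negligible by the smoothness of $H$. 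To each bond I would apply the Schwarz-type estimate from the proof of Lemma \ref{s02},
\begin{equation*}
\int\Big\{\frac{\eta(x-1)}{W^N_{x-1}}-\frac{\eta(x)}{W^N_x}\Big\}f\,d\mu_\rho \;\le\; \frac{N}{\beta_x}\,I_{x-1,x}^W(f) \;+\; \beta_x\int\Big\{\frac{\eta(x-1)}{W^N_{x-1}}+\frac{\eta(x)}{W^N_x}\Big\}f\,d\mu_\rho\, ,
\end{equation*}
with the weight $\beta_x$ chosen so that, after summation, the Dirichlet contribution $\sum_x (N|H(s,x/N)|)/(\beta_x N^\gamma)\cdot I_{x-1,x}^W(f)$ is absorbed by $(1/2)N^{2-\gamma}I_N^W(f)$, while the complementary term becomes a Riemann sum $\le C_0\int H(s,x)^2\,dx$ after using the coupling bound of Proposition \ref{s03} to control $\int \eta(x)/W^N_x\,f\,d\mu_\rho$ uniformly in the density $f$.

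The main obstacle lies in the simultaneous tuning of the weights $\beta_x$: they must be large enough to extract a clean $L^2$ norm of $H$ on the right, yet small enough that the induced Dirichlet cost can be absorbed into $N^{2-\gamma}I_N^W(f)$ uniformly in the random environment $\{W^N_x\}$. This is precisely where the $N$-scaling must be balanced against the spatial heterogeneity, and is the reason the argument cannot proceed by naive second-moment estimates, which diverge in view of the large size of $\sum_x(W^N_x)^{-1}$. A minor technical point is the penalty $(\log K)/N^\gamma$ from passing to the maximum; it is harmless because $N\to\infty$ is taken before $K\to\infty$ in the Fatou step.
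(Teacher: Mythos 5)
Your overall architecture (entropy inequality, Feynman--Kac, a max over a dense family of test functions, then Riesz representation) is the same as the paper's, but there is a genuine gap at the step you yourself flag as the ``key reduction''. You claim that, inside the Feynman--Kac variational problem
\begin{equation*}
\sup_{f}\Big\{\int V_H(s,\eta)\,f\,d\mu_\rho \;-\; N^{2-\gamma} I^W_N(f)\Big\}\,,
\end{equation*}
the complementary term $\beta_x\int\{\eta(x-1)/W^N_{x-1}+\eta(x)/W^N_x\}f\,d\mu_\rho$ produced by the Schwarz estimate can be bounded by a deterministic Riemann sum $C_0\int H(s,x)^2dx$ ``uniformly in the density $f$'' via the coupling bound of Proposition \ref{s03}. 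This is not available. Proposition \ref{s03} compares expectations of increasing functions under the \emph{process law} started from stochastically ordered initial measures; it says nothing about $\int (\eta(x)/W^N_x)\,f\,d\mu_\rho$ for an arbitrary density $f$ with respect to $\mu_\rho$, and that quantity is unbounded over densities: taking $f=d\mu_\lambda/d\mu_\rho$ with $\lambda$ large gives $I^W_N(f)=0$ (these measures are all invariant) while $\int(\eta(x)/W^N_x)f\,d\mu_\rho=\lambda N^\gamma$ is arbitrarily large. The cancellation that protects $V_H$ itself for such $f$ (it is a sum of gradients) is destroyed once you split each bond term into a Dirichlet piece and a positive quadratic piece, so no choice of weights $\beta_x$ yields the uniform-in-$f$ bound $C_0\|H\|_{L^2}^2$ you need, and the key reduction as stated is not established by your argument.

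The paper's Lemma \ref{t4} avoids exactly this trap: the quadratic term $\tfrac{2\beta}{N^{1+\gamma}}\sum_x H^2(s,x/N)\,\eta_s(x)/W^N_x$ is \emph{kept inside} the exponential as part of the functional $X_i$, so that in the variational problem it reappears as an integral against the same density $f$ on the right-hand side and the inequality closes pointwise in $f$. Only after passing to the limit (Corollary \ref{s05}) is this term identified, via Lemma \ref{s01}, with $2\int_0^T\!\!\int H^2 v\,dx\,ds$, and only then is the boundedness $v\le\bar\rho$ used to turn the penalty into $2\bar\rho\|H\|_{L^2}^2$ for the Riesz argument. Your proof becomes correct if you make the same modification: subtract the configuration-dependent quadratic term inside $\Xi^K_N$ rather than the deterministic $C_0\|H_k\|_{L^2}^2$, and postpone the replacement by $\|H_k\|_{L^2}^2$ to the limit, where Lemma \ref{s01} applies. (The sign mismatch between your $L(H)$ and $\ll\mf M^N,\partial_x H\gg$ is cosmetic, since the dense family can be taken symmetric under $H\mapsto -H$.)
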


The proof of this proposition relies on the following estimate.

\begin{lemma}
\label{t4} 
Fix a set of smooth functions $H_i : [0,T]\times \bb T \to \bb R$,
$1\le i\le \ell$. Let
\begin{equation*}
V_i (s, \eta) \;=\;  \frac{1}{N^{\gamma}} \sum_{x \in \bb T_N} 
H_i(s, x/N) \Big\{ \frac{\eta (x)}{W_x^N}
- \frac{\eta (x+1)}{W_{x+1}^N} \Big\}\;.
\end{equation*}
Then, for any $\beta>0$,
\begin{equation*}
\limsup_{N \to \infty} \bb E_{\mu^N_{u_0(\cdot)}} \Big[ \max_{1\le i\le l}
\int_0^T   \Big\{  V_i(s, \eta) \; -\; 
\frac{2 \beta }{N^{1+\gamma}} \sum_{x \in \bb T_N} H_i(s,x/N)^2 
\frac{\eta_s (x)}{W_x^N} \Big\} \, ds \Big] 
\;\leq\; \frac{K_0}{\beta} \;,
\end{equation*}
where $K_0$ is the constant given by \eqref{f08}.
\end{lemma}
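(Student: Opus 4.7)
The strategy is the standard entropy plus Feynman--Kac energy estimate, following the same template as the proof of Lemma \ref{s02}. Applying the entropy inequality at the level of path-space measures (whose transition kernels coincide, so the path relative entropy reduces to the initial one, bounded by $K_0 N^\gamma$ via \eqref{f08}), we obtain for any bounded path functional $G$
\[
\bb E_{\mu^N_{u_0(\cdot)}}[G] \;\le\; \frac{K_0}{\beta} \;+\; \frac{1}{\beta N^\gamma} \log \bb E_{\mu_\rho}\big[ \exp(\beta N^\gamma G)\big].
\]
Take $G$ equal to the quantity whose expectation appears in the statement and write $W_i(s,\eta) = V_i(s,\eta) - (2\beta/N^{1+\gamma}) \sum_{x\in \bb T_N} H_i(s,x/N)^2 \eta(x)/W_x^N$ for the integrand. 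The elementary bound $\exp(\max_i a_i) \le \sum_i \exp(a_i)$ gives $\bb E_{\mu_\rho}[\exp(\beta N^\gamma \max_i \int_0^T W_i\, ds)] \le \ell \max_i \bb E_{\mu_\rho}[\exp(\beta N^\gamma \int_0^T W_i\, ds)]$, and since $(\log \ell)/(\beta N^\gamma) \to 0$ the lemma is reduced to showing that $\bb E_{\mu_\rho}[\exp(\beta N^\gamma \int_0^T W_i(s,\eta_s)\, ds)] \le 1$ (up to a factor $1+o(1)$) for each fixed $i$.

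By Feynman--Kac (as invoked in Lemma \ref{s02} via \cite[p.\ 267]{kl}), this exponential moment is bounded by $\exp(\int_0^T \Phi(s)\, ds)$, where
\[
\Phi(s) \;=\; \sup_{f} \Big\{ \beta N^\gamma \int W_i(s, \eta) f\, d\mu_\rho \;-\; N^2 I^W_N(f) \Big\},
\]
the supremum ranging over densities $f$ with respect to $\mu_\rho$ (the factor $N^2$ comes from the time acceleration). It is therefore enough to prove the pointwise inequality
\[
\beta N^\gamma \int V_i(s,\eta) f\, d\mu_\rho \;\le\; N^2 I^W_N(f) \;+\; \frac{2\beta^2}{N} \sum_{x\in \bb T_N} H_i(s,x/N)^2 \int \frac{\eta(x)}{W_x^N} f\, d\mu_\rho
\]
for every density $f$, modulo a correction that vanishes in the limsup.

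To establish this bound we reuse the computation of \eqref{f05}. The Poisson product structure of $\mu_\rho$ yields the change-of-variables identity $\int (\eta(x)/W_x^N)\, g(\sigma^{x,x+1}\eta)\, d\mu_\rho = \int (\eta(x+1)/W_{x+1}^N)\, g(\eta)\, d\mu_\rho$, whence
\[
\int V_i(s,\eta) f\, d\mu_\rho \;=\; \frac{1}{N^\gamma} \sum_{x\in \bb T_N} H_i(s,x/N) \int \frac{\eta(x)}{W_x^N}\{f(\eta) - f(\sigma^{x,x+1}\eta)\}\, d\mu_\rho.
\]
Factoring $f - f' = (\sqrt{f} - \sqrt{f'})(\sqrt{f} + \sqrt{f'})$ and applying $|AB| \le \epsilon A^2 + B^2/(4\epsilon)$ with the calibrated weight $\epsilon_x = N/(2\beta |H_i(s,x/N)|)$, together with $(\sqrt a + \sqrt b)^2 \le 2(a+b)$ and the change of variables once more, splits each summand into a Dirichlet piece equal to $N^2 I^W_{x,x+1}(f)$ and a compensator piece bounded by $(\beta^2/N) H_i(s,x/N)^2 \int [\eta(x)/W_x^N + \eta(x+1)/W_{x+1}^N] f\, d\mu_\rho$. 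Summing over $x$ the Dirichlet fragments consume exactly $N^2 I^W_N(f)$, while an $O(1/N)$ reindexing, absorbed by the smoothness of $H_i$, turns the compensator into the right-hand side above.

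The only delicate point is the calibration of $\epsilon_x$: it must be tuned so that the Dirichlet fragments exhaust the $N^2 I^W_N(f)$ budget from Feynman--Kac while the compensator has the sharp prefactor $2\beta^2/N$. The rest is standard bookkeeping, essentially already contained in Lemma \ref{s02}.
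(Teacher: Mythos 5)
Your argument follows the paper's proof step for step: entropy inequality with the bound \eqref{f08}, reduction of the maximum via $e^{\max_i a_i}\le \sum_i e^{a_i}$ together with $(\log\ell)/(\beta N^\gamma)\to 0$, Feynman--Kac, and then the change of variables for the Poisson product measure followed by a weighted Schwarz inequality whose calibration (your $\epsilon_x = N/(2\beta|H_i(s,x/N)|)$ is exactly the paper's choice $A=\beta N^{-1}|H_i(s,x/N)|$) makes the Dirichlet fragments sum to precisely the available $N^2 I^W_N(f)$. All of those computations are correct.

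The one step that fails as written is the last one. After the Schwarz inequality the compensator you obtain is the \emph{symmetric} quantity
\begin{equation*}
\frac{\beta^2}{N}\sum_{x\in\bb T_N} H_i(s,x/N)^2 \int \Big\{\frac{\eta(x)}{W^N_x}+\frac{\eta(x+1)}{W^N_{x+1}}\Big\}\, f\, d\mu_\rho\,,
\end{equation*}
and converting it into the asymmetric compensator $\frac{2\beta^2}{N}\sum_x H_i(s,x/N)^2\int \eta(x)(W^N_x)^{-1} f\, d\mu_\rho$ of the statement produces the error $\frac{\beta^2}{N}\sum_x\{H_i(s,(x-1)/N)^2-H_i(s,x/N)^2\}\int\eta(x)(W^N_x)^{-1}f\,d\mu_\rho$. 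Its coefficient is indeed $O(1/N)$, but it multiplies $\int\eta(x)(W^N_x)^{-1}f\,d\mu_\rho$, which admits no uniform bound over arbitrary densities $f$ — and the Dirichlet budget $N^2 I^W_N(f)$ has already been spent. So the reindexing cannot be ``absorbed by smoothness'' inside the variational problem. The paper performs this replacement \emph{before} the entropy inequality, directly on the expectation $\bb E_{\mu^N_{u_0(\cdot)}}[\cdots]$: there attractiveness (Proposition \ref{s03}) dominates the evolved measure by the stationary $\mu_{\bar\rho}$, for which $\bb E[\eta_s(x)/W^N_x]=\bar\rho N^\gamma$, so the summation-by-parts error is $O(N^{-1})$ and vanishes. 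Prove the variational inequality with the symmetric compensator and do the reindexing at that earlier stage; nothing else in your argument needs to change.
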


\begin{proof}
Fix $\beta>0$ and let
\begin{equation*}
X_i(s, \eta) \;=\; V_i(s, \eta) \; -\; 
\frac{\beta }{N^{1+\gamma}} \sum_{x \in \bb T_N} H_i(s,x/N)^2 
\Big\{ \frac{\eta (x)}{W_x^N} + \frac{\eta (x+1)}{W_{x+1}^N}
\Big\} \;\cdot
\end{equation*}
A summation by parts and a coupling estimate similar to the one used
in the proof of Lemma \ref{s01} shows that it is enough to prove that
\begin{eqnarray*}
\limsup_{N \to \infty} \bb E_{\mu^N_{u_0(\cdot)}} \Big[ \max_{1\le i\le l}
\int_0^T   X_i(s, \eta) \, ds \Big] 
\;\leq\; \frac{K_0}{\beta} \;.
\end{eqnarray*}

By the entropy inequality, Jensen inequality and the entropy estimate
\eqref{f08},
\begin{equation*}
\begin{split}
& \bb E_{\mu^N_{u_0(\cdot)}} \Big[\max_{1\le i\le \ell}
\int_0^T X_i (s,\eta_s) \, ds \Big] \\
& \qquad \le\; \frac{K_0}{\beta} \;+\; \frac 1{\beta N^\gamma}
\log \, \bb E_{\mu_{\rho}} \Big[ \exp\Big\{ \max_{1\le i\le \ell}
\beta N^\gamma \int_0^T  X_i (s,\eta_s) \, ds \Big\} \, \Big]
\end{split}
\end{equation*}
for every $\beta>0$.

Since, on the one hand, $\exp\{\max_{1\le i\le \ell} a^i_N\} \le
\sum_{1\le i\le \ell} \exp\{ a^i_N \}$ and, on the other hand,
$\limsup_{n \to \infty} N^{-\gamma} \log $ $\big\{ \sum_{1\le i\le
  \ell} b^i_N \big\} = \max_{1\le i\le \ell} \limsup_{n \to \infty}
N^{-\gamma} \log b_N^i$, to prove the lemma it is enough to show that
\begin{equation}
\label{f09}
\limsup_{n \to \infty} \frac{1}{\beta N^\gamma} 
\log \, \bb E_{\mu_{\rho}} \Big[ \exp\Big\{ 
\beta N^\gamma \int_0^T X_i (s,\eta_s) \, ds \Big\} \, \Big] \;\leq\; 0
\end{equation}
for $1\le i\le \ell$ and any $\beta >0$. 

By classical arguments, relying on Feynman-Kac's formula
(cf. \cite[p. 267]{kl}), the previous expectation is bounded above by
\begin{equation*}
\int_0^T \sup_{f} \Big\{  \int X_i (s,\eta) f(\eta)
\mu_{\rho} (d\eta)  - \frac{N^2}{\beta N^\gamma} I^W_N(f) \Big\}\, ds \;,
\end{equation*}
where the supremum is carried over all densities $f$ with respect to
$\mu_{\rho}$.  

Therefore, to conclude the proof of the lemma, it is enough to show
that
\begin{equation*}
\begin{split}
& \int V_i (s,\eta) f(\eta) \mu_{\rho} (d\eta) \\
&\quad \le\; \int \frac{\beta }{N^{1+\gamma}} \sum_{x \in \bb T_N} 
H_i(s,x/N)^2 \Big\{ \frac{\eta (x)}{W_x^N} + 
\frac{\eta (x+1)}{W_{x+1}^N}\Big\} f(\eta) \mu_{\rho} (d\eta)
\; +\; \frac{N^2}{\beta N^\gamma} I^W_N(f)  
\end{split}
\end{equation*}
for all density $f$ and $\beta>0$.

Recall the definition of $V_i$. Performing a simple change of
variables, we see that
\begin{eqnarray*}
\int \Big\{ \frac{\eta(x)}{W_x^N} - \frac{\eta(x+1)}{W_{x+1}^N} \Big\} \,
f\, d\mu_\rho  \;=\; \int \frac{\eta(x)}{W_{x}^N} 
\big\{ f(\eta) - f(\sigma^{x,x+1} \eta)\big\} \, d\mu_\rho\;.
\end{eqnarray*}
Since $(a-b) = (\sqrt a - \sqrt b)(\sqrt a + \sqrt b)$, by Schwarz
inequality, the previous expression is less than or equal to
\begin{eqnarray*}
\frac {N}{A} I^W_{x,x+1} (f) \;+\;
\frac A 2 \int \frac{\eta(x)}{W_{x}^N} 
\Big\{ \sqrt{f(\sigma^{x,x+1} \eta)} + \sqrt {f(\eta)} \Big\}^2
\, d\mu_\rho
\end{eqnarray*}
for all $A>0$. The same change of variables permit to estimate the
second term as
\begin{eqnarray*}
A \,  \int \Big\{ \frac{\eta(x)}{W_{x}^N}
+ \frac{\eta(x+1)}{W_{x+1}^N} \Big\} \, f(\eta)
\, d\mu_\rho\;.
\end{eqnarray*}

Choosing $A=\beta N^{-1} |H(s,x/N)|$, we obtain that for any density $f$
with respect to $\mu_{\rho}$,
\begin{eqnarray*}
\!\!\!\!\!\!\!\!\!\!\!\!\!\!\!\! &&
\int V_i (s,\eta) (s,\eta) f(\eta) \mu_{\rho} (d\eta) \;\le\;
\frac {N^2}{\beta N^{\gamma}} \sum_{x\in \bb T_N} I^W_{x,x+1}(f) \\
\!\!\!\!\!\!\!\!\!\!\!\!\!\!\!\! &&
\quad +\; \frac {\beta}{N^{1+\gamma}} \sum_{x\in \bb T_N} 
H_i(s,x/N)^2  \int \Big\{ \frac{\eta(x)}{W_{x}^N}
+ \frac{\eta(x+1)}{W_{x+1}^N} \Big\} \, f(\eta)
\, d\mu_\rho \;, 
\nonumber
\end{eqnarray*}
which proves the lemma.
\end{proof}

Recall that, by Lemma \ref{s01}, $\mc Q^*$ is concentrated on
absolutely continuous measures $\mf M = v(t,x) dt\, dx$.

\begin{corollary}
\label{s05}
Any limit point $\mc Q^*$ of the sequence $\{\mc Q_N : N\ge 1\}$
is concentrated on measures $\mf M = v(t,x) dt\, dx$ such that
\begin{equation*}
\begin{split}
E_{Q^*} \Big[ \sup_H \Big\{ \int_0^T ds\, \int_{\bb T}
& (\partial_{x} H) (s, x) v(s,x) \, dx   \\
& - 2 \int_0^T ds\, \int_{\bb T} H (s, x)^2 v(s,x) 
\, dx \Big\} \Big] \; \le \; K_0 \; .
\end{split}
\end{equation*}
In this formula the supremum is taken over all functions $H$ in
$C^{0,1}([0,T]\times \bb T)$.
\end{corollary}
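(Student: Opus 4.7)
The plan is to apply Lemma \ref{t4} with $\beta = 1$, rewrite its input in terms of the empirical measure $\mf M^N$, pass to the limit along the subsequence $\mc Q_N \to \mc Q^*$, and then interchange supremum and expectation by a combination of density and monotone convergence.

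First I would fix a countable family $\{H_k : k\ge 1\} \subset C^\infty([0,T]\times \bb T)$ that contains $H_1 \equiv 0$ and is dense in $C^{0,1}([0,T]\times \bb T)$ for the norm $\|H\|_\infty + \|\partial_x H\|_\infty$. For each finite $\ell$, a summation by parts followed by a first-order Taylor expansion of each $H_i$ yields
\begin{equation*}
V_i(s, \eta) \;=\; \frac{1}{N^{1+\gamma}} \sum_{x\in\bb T_N} (\partial_x H_i)(s, x/N)\,\frac{\eta(x)}{W^N_x} \;+\; E^i_N(s, \eta)\;,
\end{equation*}
with $|E^i_N(s, \eta)| \le C(H_i)\,N^{-2-\gamma}\sum_x \eta(x)/W^N_x$. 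Combining Proposition \ref{s03} with the identity $E_{\mu_{\bar\rho}}[\eta(x)] = \bar\rho\, W^N_x$ shows that $\int_0^T E^i_N(s, \eta_s)\,ds$ vanishes in $L^1(\bb P_{\mu^N_{u_0(\cdot)}})$ as $N \to \infty$. The leading term is $\ll \mf M^N, \partial_x H_i \gg$, while the penalty term in Lemma \ref{t4} with $\beta = 1$ is exactly $2\ll \mf M^N, H_i^2 \gg$. Hence Lemma \ref{t4} translates into
\begin{equation*}
\limsup_{N\to\infty}\; E_{\mc Q_N}\Bigl[\,\max_{1\le i\le \ell}\bigl\{\ll\mf M,\partial_x H_i\gg - 2\ll\mf M, H_i^2\gg\bigr\}\Bigr] \;\le\; K_0\;.
\end{equation*}

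Next I would pass to the limit in $N$. For fixed smooth $H_1,\dots,H_\ell$, the map $F_\ell : \mf M \mapsto \max_i\{\ll\mf M,\partial_x H_i\gg - 2\ll\mf M, H_i^2\gg\}$ is continuous on $\mc M([0,T]\times \bb T)$ in the weak topology (a finite max of continuous functionals with bounded continuous integrands) and is non-negative because $H_1\equiv 0$. To handle its unboundedness, truncate: $F_\ell \wedge L$ is bounded continuous, so weak convergence $\mc Q_N \to \mc Q^*$ gives $E_{\mc Q^*}[F_\ell \wedge L] \le K_0$, and monotone convergence as $L\uparrow\infty$ yields $E_{\mc Q^*}[F_\ell] \le K_0$. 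Invoking Lemma \ref{s01} to write $\mf M = v(t,x)\,dt\,dx$ (with $v \ge 0$ and $v \in L^\infty$) turns this into the desired inequality restricted to the finite family $\{H_1,\dots,H_\ell\}$.

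Finally, the sequence $F_\ell$ is non-decreasing in $\ell$, so a second monotone convergence step brings the supremum over the countable family $\{H_k\}$ inside the expectation. Density of $\{H_k\}$ in $C^{0,1}([0,T]\times \bb T)$, together with the $\mc Q^*$-almost sure bound $v \in L^\infty$ from Lemma \ref{s01}, ensures that the countable supremum coincides with the supremum over all $H \in C^{0,1}([0,T]\times \bb T)$, giving the statement of the corollary. The main technical obstacle is precisely this interchange of supremum and expectation, which is overcome by the combined use of truncation (to pass to the weak limit), monotone convergence (to let $L,\ell\to\infty$), and separability of $C^{0,1}$ (to reduce to a countable dense family).
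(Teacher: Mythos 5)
Your proof is correct and follows essentially the same route as the paper: Lemma \ref{t4} with $\beta=1$, a summation by parts combined with the coupling estimate to replace the discrete gradient by $\partial_x H_i$ and identify the leading term with $\ll \mf M^N,\partial_x H_i\gg$, passage to the weak limit $\mc Q_N\to\mc Q^*$, and monotone convergence over a countable dense family in $C^{0,1}$. Your treatment is in fact slightly more careful than the paper's, which leaves implicit both the truncation needed to pass the unbounded functional $F_\ell$ through the weak convergence and the fact that density must hold for the $C^{0,1}$ norm rather than merely the uniform one.
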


\begin{proof}
Fix a limit point $\mc Q^*$ of the sequence $\mc Q_N$ and assume,
without loss of generality, that $\mc Q_N$ converges to $\mc Q^*$.
Consider a sequence $\{H_j : j\ge 1\}$ of functions in
$C^{0,1}([0,T]\times \bb T)$ dense for the uniform topology. It
follows from Lemma \ref{t4} with $\beta=1$, a summation by parts and a
coupling estimate, similar to the one used in the proof of Lemma
\ref{s01}, to replace the discrete derivative $N\{H(s, (x+1)/N) - H(s,
x/N) \}$ by the continuous one $(\partial_{x} H)(s,x/N)$, that
\begin{equation*}
  \begin{split}
E_{Q^*} \Big[ \max_{1\le i\le \ell} \Big\{ \int_0^T ds\, \int_{\bb T}
& (\partial_{x} H_i) (s, x) \, v(s,x) \, dx   \\
& - 2 \int_0^T ds\, \int_{\bb T} H_i (s, x)^2 \, v(s,x) 
\, dx \Big\} \Big] \; \le \; K_0 \;.
  \end{split}
\end{equation*}
Letting $\ell\uparrow\infty$, we conclude the proof of the lemma
applying the monotone convergence theorem.
\end{proof}

\begin{proof}[Proof of Proposition \ref{s07}.]
The proof is similar to the one of \cite[Theorem 5.7.1]{kl} and left
to the reader. Note that we have in fact
\begin{equation*}
\int_0^T ds\, \int_{\bb T} \frac{(\partial_{x} v)(s,x)^2}
{v(s,x)} \, dx \; <\; \infty\;.
\end{equation*}
\end{proof}

\subsection{$\mf M_t = \pi_t$, $W$ almost surely}

We prove in this section that $\mf M_t = \pi_t$, $W$ almost surely.

\begin{lemma}
\label{s06}
Every limit point $\mc Q^*$ of the sequence $\mc Q_N$ is concentrated
on measures $\mf M(dt,dx) = v(t,x) dt\, dx$ and $\pi(t,dx) = u(t,x)
W(dx)$ such that $u = v$ $(dt \times W(dx))$ almost surely on
$[0,T]\times \bb T$.
\end{lemma}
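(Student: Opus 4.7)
The plan is to use Corollary \ref{s10} as a bridge between $\pi^N$ and $\mf M^N$: it states that for any continuous $J:[0,T]\times \bb T\to\bb R$,
\begin{equation*}
\lim_{\epsilon\to 0}\limsup_{N\to\infty} \bb E_{\mu^N_{u_0(\cdot)}}\Big[\Big|\int_0^T \<\pi^N_s, J_s\>\, ds - \ll \mf M^N, G_\epsilon\gg\Big|\Big] \;=\; 0,
\end{equation*}
where $G_\epsilon(s,x) := J(s,x)\,\epsilon^{-1}W_\epsilon(x)$. By passing to the limit $N\to\infty$ along the convergent subsequence $\mc Q_N \to \mc Q^*$ and then $\epsilon\to 0$, I expect to obtain the identity $\int_0^T \<\pi_s, J_s\>\, ds = \int_0^T \int J(s,y)\, v(s,y)\, W(dy)\,ds$, $\mc Q^*$-almost surely, for every continuous $J$. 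Since $\<\pi_s, J_s\>=\int J(s,x)u(s,x)W(dx)$ by Lemma \ref{s01}, a density argument in $L^2(dt\otimes W(dx))$ then yields $u=v$ $\,dt\otimes W(dx)$-a.e., $\mc Q^*$-a.s.

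Passing to the limit $N\to\infty$ requires some care because $G_\epsilon$ is not continuous in $x$: the function $W_\epsilon(x)=W([x,x+\epsilon])$ is right-continuous with jumps located at the (at most countable) atoms of $W$ shifted by $\epsilon$. However, by Lemma \ref{s01}, $\mc Q^*$-almost surely $\mf M$ is absolutely continuous with respect to Lebesgue measure, so the discontinuity set of $G_\epsilon$ is $\mf M$-null. Combined with the boundedness of $G_\epsilon$, the uniform integrability supplied by the coupling estimate of Proposition \ref{s04}, and the Portmanteau theorem applied to the mapping $\mf M \mapsto \ll \mf M, G_\epsilon\gg$, this yields $\bb E_{\mc Q^*}\big[\big|\int_0^T \<\pi_s, J_s\>\, ds - \ll \mf M, G_\epsilon\gg\big|\big]\to 0$ as $\epsilon\to 0$.

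To then take $\epsilon\to 0$ inside $\ll\mf M,G_\epsilon\gg = \int_0^T\int J(s,x)\, v(s,x)\,\epsilon^{-1} W_\epsilon(x)\,dx\,ds$, I would invoke Proposition \ref{s07}: the generalized space derivative of $v$ lies in $L^2$, so for $dt$-a.e.\ $t$ the function $v_t$ belongs to $\mc H_1$ and admits a continuous representative in $x$. Working with this representative, Fubini's theorem gives
\begin{equation*}
\int_{\bb T} J(s,x)\, v(s,x)\, \epsilon^{-1}W_\epsilon(x)\,dx \;=\; \int_{\bb T} W(dy)\; \epsilon^{-1}\!\int_{y-\epsilon}^{y} J(s,x)\, v(s,x)\,dx,
\end{equation*}
and the inner average converges pointwise in $y$ to $J(s,y)\,v(s,y)$ by continuity of $v(s,\cdot)$. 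Using the uniform bound $\|v\|_\infty\le\|u_0\|_\infty$ from Lemma \ref{s01}, the dominated convergence theorem yields the desired limit $\int_0^T\int J(s,y)\,v(s,y)\,W(dy)\,ds$, in $L^1(\mc Q^*)$.

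The main obstacle I anticipate is the first step: justifying the passage $\ll\mf M^N, G_\epsilon\gg \to \ll\mf M, G_\epsilon\gg$ in the face of the discontinuities of $W_\epsilon$ at the atoms of $W$. The decisive input is Lemma \ref{s01}, which forces $\mf M$ to be absolutely continuous, so that atoms of $W$, being Lebesgue-negligible, are also $\mf M$-negligible and Portmanteau applies. A secondary subtlety is giving a pointwise meaning to $v$ on the support of $W$ (which may be singular with respect to Lebesgue), which I handle by selecting the $\mc H_1$-continuous representative in $x$ afforded by Proposition \ref{s07}.
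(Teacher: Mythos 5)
Your proposal is correct and follows essentially the same route as the paper: Corollary \ref{s10} is the bridge between $\pi^N$ and $\mf M^N$, Lemma \ref{s01} supplies absolute continuity, and Proposition \ref{s07} supplies the regularity of $v$ needed to send $\epsilon\to 0$ after the change of order of integration. The only (immaterial) difference is that the paper controls $v(s,x)-v(s,y)$ for $|x-y|\le\epsilon$ quantitatively via the $L^2$ bound on $\partial_x v$, whereas you use the continuous representative of $v_s\in\mc H_1$ together with dominated convergence; your explicit handling of the discontinuities of $W_\epsilon$ in the $N\to\infty$ limit is a point the paper leaves implicit.
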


\begin{proof}
Fix a limit point $\mc Q^*$ of the sequence $\mc Q_N$ and assume,
without loss of generality, that $\mc Q_N$ converges to $\mc Q^*$.
Fix a continuous function $J: [0,T] \times \bb T \to \bb R$. By
Corollary \ref{s10} and by Lemma \ref{s01},
\begin{equation*}
\begin{split}
\lim_{\epsilon \to 0} 
E_{\mc Q^*} \Big[\, \Big| \int_0^T  ds\, \int_{\bb T} J(s,x) \, u(s,x)
\, W(dx) & \\
\;-\; \int_0^T  & ds\, \int_{\bb T} J(s,x) \epsilon^{-1} \, 
W_\epsilon (x) \, v(s,x) \, dx   \Big| \, \Big] \;=\;0\;.
\end{split}
\end{equation*}

It follows from the energy estimate stated in Proposition \ref{s07}
that 
\begin{equation*}
\lim_{\epsilon \to 0} 
\int_0^T  ds\, \int_{\bb T} dx\, J(s,x) \, \frac 1{\epsilon} \, 
\int_{[x,x+\epsilon)} \{ v(s,x) - v(s,y) \} \, W (dy) \;=\; 0
\end{equation*}
$\mc Q^*$ almost surely. Changing the order of summation, it follows
from the continuity of $J$ that
\begin{equation*}
\begin{split}
& \lim_{\epsilon \to 0} 
\int_0^T  ds\, \int_{\bb T} dx\, J(s,x) \, \frac 1{\epsilon} \, 
\int_{[x,x+\epsilon)}  v(s,y) \, W (dy) \\
& \quad \;=\; 
\int_0^T  ds\, \int_{\bb T}  J(s,x) \, v(s,x) \, W (dx)\;.
\end{split}
\end{equation*}
Hence, for all continuous function $J$, $\mc Q^*$ almost surely
\begin{equation*}
\int_0^T  ds\, \int_{\bb T} J(s,x) \, \{ u(s,x) - v(s,x) \}
\, W(dx)  \;=\;0\;,
\end{equation*}
which proves the lemma.
\end{proof}

\section{Uniqueness of weak solutions}
\label{ssec4}

\begin{theorem}
\label{t2}
There exists at most one weak solution of (\ref{ec2}).
\end{theorem}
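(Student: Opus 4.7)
The plan is to exploit linearity. Let $u^1$ and $u^2$ be two weak solutions of \eqref{ec2} with the same initial profile $u_0$ and set $w = u^1 - u^2$. Subtracting the two weak formulations, $w$ has finite energy and satisfies
\begin{equation*}
\int_0^T \<\partial_t G_t, w_t\>_W \, dt \;=\; \frac{1}{2}\int_0^T \<\partial_x G_t,\partial_x w_t\>\, dt
\end{equation*}
for every smooth $G$ vanishing at $T$. The strategy is to feed the identity a test function adapted to $w$ itself, so that both sides collapse to quadratic quantities of opposite sign.

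The (formal) natural choice is $G_t(x) = \int_t^T w_s(x)\, ds$, which satisfies $G_T = 0$ and $\partial_t G_t = -w_t$; the left-hand side then becomes $-\int_0^T \|w_t\|_W^2\, dt$. Viewing $F(t):= \partial_x G_t$ as an $L^2(\bb T)$-valued absolutely continuous function of $t$ with $\dot F(t) = -\partial_x w_t$ (which is legitimate because $\partial_x w \in L^2([0,T]\times\bb T)$ by the finite energy hypothesis), the right-hand side becomes
\begin{equation*}
\frac{1}{2}\int_0^T \<\partial_x G_t, \partial_x w_t\>\, dt \;=\; -\frac{1}{2}\int_0^T \<F(t),\dot F(t)\>\, dt \;=\; \frac{1}{4}\bigl[\|F(0)\|_2^2 - \|F(T)\|_2^2\bigr] \;=\; \frac{1}{4}\|\partial_x G_0\|_2^2,
\end{equation*}
using $G_T = 0$. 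Combining, $-\int_0^T \|w_t\|_W^2\, dt = \tfrac{1}{4}\|\partial_x G_0\|_2^2$; the left-hand side is $\leq 0$ and the right-hand side is $\geq 0$, so both vanish and $w\equiv 0$ in $L^2(dt\otimes dW)$.

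The main obstacle is that the function $G$ above is not smooth and therefore not admissible in Definition \ref{s11}. I would address this by regularization: extending $w$ suitably to $\bb R\times\bb T$ (say by zero in time and by periodicity in space) and convolving with a product mollifier $\sigma_\epsilon(x)\eta_\delta(t)$ yields smooth approximations $w^{\epsilon,\delta}$. Setting $G^{\epsilon,\delta}_t(x) = \int_t^T w^{\epsilon,\delta}_s(x)\, ds$ produces a smooth test function vanishing at $T$, to which Definition \ref{s11} applies. The finite-energy condition $\int_0^T \<w_t,w_t\>_{1,2}\, dt<\infty$, together with the one-dimensional Sobolev embedding $\mc H_1 \hookrightarrow C(\bb T)$ and the finiteness of $W$, guarantees that $w^{\epsilon,\delta}\to w$ in $L^2([0,T]\times\bb T;\, dt\otimes W)$ and that $\partial_x w^{\epsilon,\delta}\to \partial_x w$ in $L^2([0,T]\times\bb T;\, dt\otimes dx)$ as $\delta,\epsilon\to 0$. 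A Cauchy--Schwarz estimate then yields $\partial_x G^{\epsilon,\delta}\to \partial_x G$ in $L^2([0,T]\times\bb T)$, so passing to the limit in the weak identity with test function $G^{\epsilon,\delta}$ recovers the formal computation above. The only delicate checks are this limit transition and the absolute continuity of $t\mapsto \|F(t)\|_2^2$, both of which follow from $\partial_x w \in L^2([0,T]\times\bb T)$.
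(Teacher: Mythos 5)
Your proposal is correct and follows essentially the same route as the paper: by linearity one reduces to $u_0\equiv 0$, tests against (a smooth approximation of) $G_t=\pm\int_t^T u_s\,ds$, and obtains an identity equating the nonnegative quantity $\int_0^T\<u_t,u_t\>_W\,dt$ with the nonpositive quantity $-\tfrac14\bigl\lVert\int_0^T\partial_x u_t\,dt\bigr\rVert_2^2$, forcing both to vanish. The only cosmetic difference is that you produce the smooth test functions by explicit mollification, whereas the paper simply asserts the existence of suitable smooth approximants $u_\epsilon$ with convergence in $L^2(dW)$ and of the gradients in $L^2(dx)$.
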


\begin{proof}
We use a method due to Oleinik (cf. pg. 90 in \cite{v}). Due to the
linearity of problem (\ref{ec2}), it is enough to show that the
constant function equal to $0$ is the unique weak solution of equation
\eqref{ec2} with initial condition $u_0 \equiv 0$.

Fix such solution $u$. By condition (i), $u$ belongs to $L^2([0,T];
\mc H_{1})$. Since $u(t,\cdot)$ is continuous for almost all $t$, it
is not difficult to show that there exists a sequence of smooth
functions $u_\epsilon : [0,T]\times \bb T\to\bb R$, $\epsilon >0$,
such that $\Vert u_\epsilon\Vert_\infty \le \Vert u \Vert_\infty$ and 
\begin{equation*}
\lim_{\epsilon \to 0} \int_0^T dt \Big\{  \Vert u_\epsilon (t,\cdot) -
u (t,\cdot) \Vert^2_{2,W} \;+\; \Vert (\partial_x
u_\epsilon) (t,\cdot) - (\partial_x u) (t,\cdot) \Vert^2_2 \Big\} \;=\;
0\; .
\end{equation*}

Consider the test function $G_\epsilon:[0,T]\times \bb T \to \bb R$
defined by
\begin{equation*}
G_\epsilon(t,x) \;=\;  - \int_t^T u_\epsilon (s,x) \, ds \;.
\end{equation*}
Since $\partial_t G_\epsilon = u_\epsilon$, and since $u_\epsilon
(t,\cdot)$ converges to $u (t,\cdot)$ in $L^2(dW)$ for almost all $t$,
by the dominated convergence theorem,
\begin{equation*}
\lim_{\epsilon \to 0} \int_0^T \<\partial_t G_\epsilon (t,\cdot) 
,u(t,\cdot)\>_W \, dt \;=\;  \int_0^T \<u_t,u_t\>_W \, dt \;.
\end{equation*}
On the other hand, since 
\begin{equation*}
\int_0^T \<(\partial_x G_\epsilon) (t,\cdot) 
, (\partial_x u) (t,\cdot)\> \, dt \;=\; -\;
\int_0^T dt \, \int_t^T 
\<(\partial_x u_\epsilon) (s,\cdot)  , 
(\partial_x u) (t,\cdot)\> \, ds\;,
\end{equation*}
and since $\partial_x u_\epsilon$ converges to $\partial_x u$
in $L^2([0,T]\times \bb T)$, 
\begin{equation*}
\lim_{\epsilon \to 0} \int_0^T  \<(\partial_x G_\epsilon) (t,\cdot) 
, (\partial_x u) (t,\cdot)\> \, dt \;=\; -\;
\frac 12 \, \int_{\bb T} \Big( \int_0^T (\partial_x u) (t,x)
\, dt \Big)^2 \, dx \;.
\end{equation*}
Hence, by condition (ii), since $u_0=0$,
\begin{equation*}
\int_0^T  \<u_t,u_t\>_W \, dt \;=\; -\, \frac 14 \, \int_{\bb T} 
\Big( \int_0^T (\partial_x u) (t,x) \, dt \Big)^2 \, dx \;. 
\end{equation*}
This show that $u_t \equiv 0$ for almost every $t$, and uniqueness
follows.
\end{proof}

\section{Atomic trap models in dimension $d\ge 2$}
\label{sect1}

We prove in this section Theorems \ref{mt4}, \ref{mt5} and \ref{mt2}. 

\subsection{Capacity and trace process}

To help the reader to follow the arguments of this section, we
summarize below known results on capacity and trace processes used
later.  Consider a reversible, ergodic Markov chain $\{X_t: t\geq 0\}$
on a countable set $E$. Fix a non-empty subset $F$ of $E$ and denote
by $\{X_t^F: t \geq 0\}$ the trace process of $\{X_t: t \geq 0\}$ on
$F$, as defined in Subsection \ref{ss21}. 

Denote by $\nu$ the unique invariant probability measure of $\{X_t: t
\geq 0\}$ and by $\nu^F$ the invariant probability measure of the
trace process $\{X_t^F:t \geq 0\}$. By Lemma 5.3 of \cite{bl1},
$\nu^F$ coincides with the measure $\nu$ conditioned to $F$, and
$\nu^F$ is reversible.

For $x \in E$ (resp. $x \in F$), let $\bb P_x$ (resp. $\bb P_x^F$) be
the distribution on the path space $D(\bb R_+, E)$ (resp. $D(\bb
R_+,F)$) induced by the process $\{X_t:t \geq 0\}$ (resp. $\{X_t^F:t
\geq 0\}$) starting from $x$.

For a subset $B$ of $E$ (or $F$), denote by $H(B)$ the entry time in
$B$, defined as
\begin{equation*}
H(B) = \inf \{t \geq 0: Z_t \in B\},
\end{equation*}
where $Z_t$ stands either for $X_t$ or for $X_t^F$. The context will
always clarify to which process we are referring to. Denote by
$\tau(B)$ the time of first return of $\{X_t:t \geq 0\}$ to $B$:
\begin{equation*}
\tau(B) = \inf\{t > T_1: X_t \in B\},
\end{equation*}
where $T_1$ stands for the time of the first jump of $\{X_t:t
\geq0\}$.  When the set $B$ is a singleton $\{x\}$, we denote
$H(\{x\})$, $\tau (\{x\})$ by $H(x)$, $\tau (x)$, respectively.

Denote by $\lambda: E \to \bb R_+$ the holding times of the Markov
process $\{X_t: t\ge 0\}$. By Lemma 5.4 in \cite{bl1}, the rate
$r^F(x,y)$ at which the trace process $\{X^F_t : t\ge 0\}$ jumps from
a site $x\in F$ to a site $y\in F$, $y\not = x$, is given by
\begin{equation}
\label{f05b}
r^F(x,y) \;=\; \lambda(x) \, \bb P_x \big [
H(y) < \tau(F \setminus \{y\})\big] \;.
\end{equation}

The expectation of an entry time has a simple expression in terms of
the capacities associated to the process $\{X_t: t \geq 0\}$. Denote by
$L$ the generator of the process $\{X_t: t \geq 0\}$. Let $A, B
\subseteq E$ be two disjoint sets. Define
\begin{equation*}
\mc B(A,B) = \{f: E \to \bb R : f(x) =1 \text{ for } x \in A 
\text{ and } f(x) =0 \text{ for } x \in B\}.
\end{equation*}

Let $D$ be the Dirichlet form associated to $\{X_t :t \geq 0\}$: $D(f)
=-\int f L f d\nu$ for any $f: E \to \bb R$. The capacity $\Cap(A,B)$
between $A$ and $B$ is defined as
\begin{equation}
\label{ec1:s61}
\Cap(A,B) \;=\; \inf \{D(f): f \in \mc B(A,B)\}\;.
\end{equation}

Notice that $\Cap(A,B) = \Cap(B,A)$; it is enough to consider $\tilde
f = 1-f$. An elementary computation shows that $\Cap(A,B) =
D(f_{A,B})$, where $f_{A,B}$ is the unique solution of
\begin{equation*}
\left\{
\begin{array}{ll}
(L f)(x) =0 & \text{ if } x \in E \setminus (A \cup B)\\
f(x) = 1 & \text{ if } x \in A\\
f(x) = 0 & \text{ if } x \in B.
\end{array}
\right.
\end{equation*}
It is easy to see, by the strong Markov property, that
\begin{equation}
\label{ec2:s61}
f_{A,B} (x) = \bb P_x[H(A) < H(B)]. 
\end{equation}

Let $A$, $B$ be two disjoint subsets of $F$. Define $\Cap_F(A,B)$ as
the capacity between $A$ and $B$, with respect to the trace process
$\{X_t^F: t \geq 0\}$. By Lemma 5.4 (d) in \cite{bl1}, 
\begin{equation}
\label{f04}
\Cap_{F}(A,B) \;=\; \frac {\Cap(A,B)}{\nu(F)}\;\cdot
\end{equation}

The first result of this section establishes the relation between
capacity and expectation of hitting times.

\begin{lemma}
\label{s02b}
For any subset $A$ of $F$ and any $y$ in $F\setminus A$,
\begin{equation*}
\bb E^{F}_y \big[H (A) \big]
\;=\; \frac 1{\Cap (y,A)} \sum_{z\in F} \nu (z)\,
\bb P_z \big[H(y) < H (A) \big]\;.
\end{equation*}
\end{lemma}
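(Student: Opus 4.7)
The plan is to reduce the identity to a statement purely about the trace process $\{X^F_t : t\ge 0\}$ and then derive it from a Green-type identity based on reversibility. The first observation is that for any starting point $z\in F$, the order in which the original process visits $y$ and $A$ coincides with the order in which the trace process visits them, because both $y$ and $A$ lie inside $F$. Therefore
\[
\bb P_z[H(y) < H(A)] \;=\; \bb P^F_z[H(y) < H(A)] \;=:\; h(z), \qquad z\in F.
\]
In view of \eqref{f04} and $\nu^F(z) = \nu(z)/\nu(F)$, the identity to be proved is equivalent to
\[
\bb E^F_y[H(A)] \;=\; \frac{1}{\Cap_F(y,A)} \sum_{z\in F} \nu^F(z)\, h(z),
\]
a statement internal to the reversible chain $\{X^F_t\}$ with reversible measure $\nu^F$.

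Next I would introduce $g(z) = \bb E^F_z[H(A)]$ and recall the usual characterizations in terms of the generator $L^F$: the function $h$ satisfies $L^F h = 0$ on $F\setminus(\{y\}\cup A)$, $h(y)=1$, $h\equiv 0$ on $A$, while $g$ satisfies $L^F g = -1$ on $F\setminus A$ and $g\equiv 0$ on $A$. Reversibility of $\nu^F$ gives the symmetry
\[
\sum_{z\in F} \nu^F(z)\, g(z)\, (L^F h)(z) \;=\; \sum_{z\in F} \nu^F(z)\, h(z)\, (L^F g)(z).
\]
On the left-hand side every contribution vanishes except at $z=y$ (since $L^F h$ is supported on $\{y\}\cup A$ and $g\equiv 0$ on $A$), producing $\nu^F(y)\, g(y)\, (L^F h)(y)$. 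On the right-hand side, $h\equiv 0$ on $A$ and $L^F g = -1$ off $A$, so the sum collapses to $-\sum_{z\in F} \nu^F(z)\, h(z)$.

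Finally, I would identify the capacity in terms of $L^F h$. By the variational definition \eqref{ec1:s61} and the fact that the equilibrium potential attains the infimum,
\[
\Cap_F(y,A) \;=\; -\sum_{z\in F} \nu^F(z)\, h(z)\, (L^F h)(z) \;=\; -\,\nu^F(y)\,(L^F h)(y),
\]
the last step using $h(y)=1$, $h\equiv 0$ on $A$, and $L^F h \equiv 0$ elsewhere. Substituting this into the previous display gives $g(y)\,\Cap_F(y,A) = \sum_{z\in F} \nu^F(z)\, h(z)$, which upon multiplying by $\nu(F)$ and applying \eqref{f04} yields the statement. I do not anticipate a real obstacle: the only step that deserves a moment's thought is the identification $\bb P^F_z[H(y) < H(A)] = \bb P_z[H(y) < H(A)]$ for $z\in F$, and the rest is routine manipulation of Dirichlet forms under reversibility.
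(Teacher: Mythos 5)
Your proof is correct. The reduction you carry out first --- replacing $\bb P_z[H(y)<H(A)]$ by $\bb P^F_z[H(y)<H(A)]$ for $z\in F$ (the orders of visits to subsets of $F$ agree for $X$ and its trace $X^F$), together with $\nu^F=\nu(\,\cdot\,|F)$ and \eqref{f04} --- is exactly the reduction in the paper, which at that point simply quotes the resulting trace-chain identity from equation (4.12) of \cite{bl1} and stops. What you do differently is supply a self-contained proof of that quoted identity: the symmetry $\sum_z\nu^F(z)g(z)(L^Fh)(z)=\sum_z\nu^F(z)h(z)(L^Fg)(z)$ applied to the equilibrium potential $h$ and the mean hitting time $g$, plus the identification $\Cap_F(y,A)=D^F(h)=-\nu^F(y)(L^Fh)(y)$ coming from \eqref{ec1:s61}--\eqref{ec2:s61}. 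This is the standard derivation and each step checks out; it buys self-containedness where the paper defers to a reference. The only caveat worth recording is that on a genuinely countable $E$ the Green identity for the possibly unbounded function $g$ requires a summability justification (e.g.\ $\sum_z\nu^F(z)g(z)<\infty$ or an exhaustion by finite sets), whereas in the only setting where the lemma is invoked, $E=\bb T^d_N$ is finite and your computation is airtight.
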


\begin{proof}
Fix $A\subset F$ and $y$ in $F\setminus A$. By equation (4.12) in
\cite{bl1},
\begin{equation*}
\bb E^{F}_y \big[ H (A) \big]
\;=\; \frac 1{\Cap_{F}(y,A)} \sum_{z\in F} \nu^{F}(z)\,
\bb P^{F}_z \big[ H(y) < H (A) \big]\;.
\end{equation*}
To prove the lemma, it remains to recall that $\nu^{F}$ is the measure
$\nu$ conditioned to $F$, \eqref{f04} and Lemma 5.4 (a) in \cite{bl1}.
\end{proof}

Note that $\bb P_z [ H(y) < H (A) ]$ vanishes for $z$ in
$A$. In particular,
\begin{equation}
\label{f06}
\bb E^{F}_y \big[H (A) \big]
\;\le\; \frac {\nu(F\setminus A)}{\Cap (y,A)} \;\cdot
\end{equation}

Capacities are also related to return times. Next result follows from
equation (6.10) in \cite{bl1}.

\begin{lemma}
\label{escape}
Let $A$ be a finite subset of $E$, and let $y\in E \setminus A$. Then,
\begin{equation*}
\bb P_y[ H(A) < \tau(y)] \;=\; \frac{\Cap(A,\{y\})}{\lambda(y)\nu(y)}
\; \cdot
\end{equation*}
\end{lemma}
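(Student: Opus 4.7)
The plan is to realize $\Cap(A,\{y\})$ as the Dirichlet form of the equilibrium potential and then read off the escape probability from a first-step analysis at $y$.

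First I would introduce $g = f_{\{y\},A}$, the unique function in $\mc B(\{y\},A)$ with $Lg = 0$ on $E\setminus(A\cup\{y\})$. By \eqref{ec2:s61}, $g(x) = \bb P_x[H(y) < H(A)]$, and by the symmetry $\Cap(A,\{y\}) = \Cap(\{y\},A) = D(g)$ recorded just after \eqref{ec1:s61}. Expanding
\begin{equation*}
D(g) \;=\; -\int g\, L g \, d\nu \;=\; -\sum_{x\in E} \nu(x)\, g(x)\, (Lg)(x),
\end{equation*}
I would note that $g \equiv 0$ on $A$ and $Lg \equiv 0$ off $A\cup\{y\}$, so the only surviving contribution is the single point $x=y$, giving $D(g) = -\nu(y)\, (Lg)(y)$.

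Next I would compute $(Lg)(y)$ via a first-jump decomposition. Write $L f(y) = \lambda(y)\sum_z p(y,z)\{f(z)-f(y)\}$, where $p(y,\cdot)$ is the embedded chain's transition kernel. Using $g(y)=1$ and then $1-g(z) = \bb P_z[H(A) < H(y)]$,
\begin{equation*}
(Lg)(y) \;=\; -\lambda(y) \sum_z p(y,z)\,\bb P_z[H(A)<H(y)] \;=\; -\lambda(y)\, \bb P_y[H(A) < \tau(y)],
\end{equation*}
the last equality being the strong Markov property applied at the first jump time $T_1$ from $y$ (noting that $\tau(y)$ is the first return to $y$ \emph{after} $T_1$, so one conditions on the first jump destination $z$ and then asks whether the chain from $z$ reaches $A$ before $y$). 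Combining the two displays yields
\begin{equation*}
\Cap(A,\{y\}) \;=\; \nu(y)\, \lambda(y)\, \bb P_y[H(A)<\tau(y)],
\end{equation*}
which is the desired identity after rearrangement.

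The main obstacle is to carry out the first-step identity for $(Lg)(y)$ cleanly: one must verify that the sum over $z$ with destinations in $A$ (where $g(z)=0$, hence $1-g(z)=1$) is correctly interpreted as $\bb P_z[H(A)<H(y)]=1$, and that the rate/probability decomposition $L=\lambda(y)(P-I)$ at $y$ matches the convention under which $\tau(y)$ is defined (first return \emph{after} $T_1$). Once these conventions are aligned, the computation is a short line and the result follows without further input.
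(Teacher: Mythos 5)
Your argument is correct. Note that the paper itself gives no proof of this lemma: it simply refers to equation (6.10) of \cite{bl1}. What you have written is the standard self-contained derivation via the equilibrium potential, and it is essentially the classical argument that underlies that reference. The two key identities both check out: since $g=f_{\{y\},A}$ is harmonic off $A\cup\{y\}$ and vanishes on $A$, the sum $-\sum_x\nu(x)g(x)(Lg)(x)$ collapses to the single term at $x=y$ with $g(y)=1$, giving $\Cap(A,\{y\})=D(g)=-\nu(y)(Lg)(y)$; and the first-step decomposition together with the strong Markov property at $T_1$ gives $(Lg)(y)=-\lambda(y)\,\bb P_y[H(A)<\tau(y)]$. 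Two small points deserve explicit mention if you write this up. First, the identification $1-g(z)=\bb P_z[H(A)<H(y)]$ uses ergodicity (so that $H(A)\wedge H(y)<\infty$ a.s.\ and the two events partition the space); the lemma is stated for the ergodic chain, so this is fine, but it is worth saying. Second, on a countable state space the identity $D(g)=-\sum_x\nu(x)g(x)(Lg)(x)$ and the fact that the infimum in \eqref{ec1:s61} is attained at the harmonic function both involve a summation-by-parts that should be justified (here it is harmless because $g$ is bounded and $Lg$ is supported on the finite set $A\cup\{y\}$, but in general one must check integrability). With these remarks your proof is complete and, unlike the paper, does not outsource the statement to an external reference.
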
 

When the Markov chain $\{X_t: t \geq 0\}$ is not ergodic, the definition
of the capacity can be generalized in a natural way. Assume that there
exists a positive measure $\nu$, reversible and invariant for $\{X_t:
t\geq 0\}$. In this case, of course, $\nu(E)=+\infty$.

To define the capacity between a finite set $A$ and infinity, consider
an increasing sequence of finite sets $B_n \subseteq E$ such that
$\cup_n B_n =E$. Since $A$ is finite, $\Cap(A,B_n^c)$, given by the
variational formula \eqref{ec1:s61}, is well defined for $n$ large
enough. The sequence of functions $f_{A,B_n^c}$, introduced in
\eqref{ec2:s61}, is increasing and bounded. Therefore, we can define
$f_A(x) =\lim_n f_{A,B_n^c}(x)$. It is not difficult to check that
\begin{equation*}
f_{A} (x) \;=\; \bb P_x[H(A) < \infty] \;, \quad
D(f_A) \;=\; \inf \{D(f): f\in \mc B(A)  \}\;,
\end{equation*}
where $\mc B(A)$ is the set of finitely supported functions $f:E\to
\bb R$ such that $f(x)=1$ for $x\in A$. Let $\Cap(A) := D(f_A)$ be the
capacity of $A$ with respect to infinity.

By the dominated convergence theorem and Lemma \ref{escape}, the
following result holds.

\begin{lemma}
\label{transient-escape}
For any $y$ in $E$,
\begin{equation*}
\bb P_y[\tau(y)=+\infty] \;=\; \frac{\Cap(y)}{\lambda(y) \, \nu(y)}\;
\cdot 
\end{equation*}
\end{lemma}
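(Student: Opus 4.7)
Fix $y \in E$ and choose an increasing sequence $(B_n)_{n \ge 1}$ of finite subsets of $E$ with $y \in B_1$ and $\bigcup_n B_n = E$. The plan is to apply Lemma \ref{escape} at each finite scale $n$ to relate the escape probability out of $B_n$ before returning to $y$ with the capacity between $\{y\}$ and $B_n^c$, and then pass $n \to \infty$ using the dominated convergence theorem, exactly as the hint preceding the statement suggests.

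The first step is to establish, for each $n$, the identity
\[
\bb P_y\bigl[H(B_n^c) < \tau(y)\bigr] \;=\; \frac{\Cap(\{y\}, B_n^c)}{\lambda(y)\,\nu(y)}\,.
\]
Since $B_n^c$ is in general infinite, Lemma \ref{escape} does not apply literally. I would recover the identity by taking an increasing sequence $A_m \uparrow B_n^c$ of finite subsets of $B_n^c$ and applying Lemma \ref{escape} (together with the symmetry $\Cap(A_m, \{y\}) = \Cap(\{y\}, A_m)$) to each $A_m$. On letting $m \to \infty$, the events $\{H(A_m) < \tau(y)\}$ increase to $\{H(B_n^c) < \tau(y)\}$, while the equilibrium potentials $f_{\{y\}, A_m}(x) = \bb P_x[H(y) < H(A_m)]$ decrease pointwise to $f_{\{y\}, B_n^c}(x) = \bb P_x[H(y) < H(B_n^c)]$, with Dirichlet energies converging accordingly by the same monotone-convergence argument that underlies the definition of $\Cap(\{y\})$.

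Now I would pass $n \to \infty$. The right-hand side converges to $\Cap(\{y\})/(\lambda(y)\,\nu(y))$ directly from the construction of $\Cap(\{y\})$ recalled in the paragraph preceding the lemma: $f_{\{y\}, B_n^c}$ increases pointwise to $f_{\{y\}}(x) = \bb P_x[H(y) < \infty]$, and by definition $D(f_{\{y\}, B_n^c}) \uparrow D(f_{\{y\}}) = \Cap(\{y\})$. For the left-hand side, the events $\{H(B_n^c) < \tau(y)\}$ are nested decreasing in $n$, and their intersection agrees $\bb P_y$-almost surely with $\{\tau(y) = +\infty\}$ --- the difference consists of trajectories that avoid $y$ forever yet remain confined to a finite subset of $E \setminus \{y\}$, a negligible event under the implicit irreducibility of $\{X_t\}$. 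Dominated convergence then yields $\bb P_y[H(B_n^c) < \tau(y)] \downarrow \bb P_y[\tau(y) = +\infty]$, and combining the two limits concludes the proof.

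The main obstacle I anticipate is precisely the first step, namely upgrading Lemma \ref{escape} from finite sets to the infinite set $B_n^c$ via the inner approximation $A_m \uparrow B_n^c$ and checking that all the monotone convergences (of events, equilibrium potentials, and Dirichlet energies) line up; everything beyond that is a routine application of the dominated convergence theorem.
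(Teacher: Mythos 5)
Your argument is exactly the one the paper intends: the paper's entire proof consists of the remark that the lemma follows from Lemma \ref{escape} and dominated convergence along the exhaustion $B_n\uparrow E$, which is precisely your outline (escape identity at each finite scale, capacities converging to $\Cap(y)$ by construction, events decreasing to $\{\tau(y)=+\infty\}$ up to the null event of eternal confinement to a finite set). Your inner approximation $A_m\uparrow B_n^c$ is just an extra, harmless layer of care to respect the finiteness hypothesis stated in Lemma \ref{escape}; it is correct and does not change the approach.
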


We conclude this subsection with two estimates for the simple
symmetric random walk on the torus $\bb T^d_N$ or on the lattice $\bb
Z^d$. Taking adavantage of the commuting time identity, which relates
expectation of hitting times with capacities, Proposition 10.13 of
\cite{peres} establishes the following bounds for the hitting times of
the simple symmetric random walk on $\bb T^d_N$: 

\begin{lemma}
\label{captorus}
Let $x$, $y$ two points at distance $k$ on the torus $\bb
T^d_N$. There exist constants $0<c_d<C_d<+\infty$ such that
in dimension $d\ge 3$,
\begin{equation*}
c_d N^d \;\leq\; \bb E_x[H(y)] \;\leq\; C_d N^d \quad
\text{uniformly in $k$}
\end{equation*}
and in dimension $2$,
\begin{equation*}
c_2 N^2 \log k \;\leq\; \bb E_x[H(y)] \;\leq\; C_2 N^2 \log (k+1) \;.
\end{equation*}
\end{lemma}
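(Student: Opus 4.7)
The approach I would take is through the commute time identity, reducing everything to estimates on the effective resistance on $\bb T^d_N$; this is precisely the strategy behind Proposition 10.13 in \cite{peres}. Since the map $z \mapsto x+y-z$ (mod $N$) is an automorphism of the torus exchanging $x$ and $y$, we have $\bb E_x[H(y)] = \bb E_y[H(x)]$. The commute time identity applied to the reversible chain with generator $\mc L_N$ (in the case $W^N_x \equiv 1$) then yields
\begin{equation*}
2\, \bb E_x[H(y)] \;=\; c_0\, N^d\, R_{\rm eff}(x,y)
\end{equation*}
for some constant $c_0 = c_0(d) > 0$. It therefore suffices to establish that $R_{\rm eff}(x,y)$ is of order $1$ when $d \ge 3$, and of order $\log(k+1)$ when $d = 2$, where $k$ denotes the graph distance from $x$ to $y$ on $\bb T^d_N$.

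For the lower bound on $R_{\rm eff}$, I would appeal to the Nash-Williams inequality. For each $1 \le r \le k/2$, the edge boundary $\partial B(x, r)$ of the ball of radius $r$ around $x$ has cardinality at most $K_d\, r^{d-1}$, separates $x$ from $y$, and these boundaries are pairwise disjoint across distinct values of $r$. Nash-Williams thus yields
\begin{equation*}
R_{\rm eff}(x,y) \;\ge\; \sum_{r=1}^{\lfloor k/2 \rfloor} \frac{1}{K_d\, r^{d-1}}\;,
\end{equation*}
which is bounded below by a positive constant in dimension $d \ge 3$ (together with the trivial bound $R_{\rm eff}(x,y) \ge 1/(2d)$ for the case $k = 1$), and of order $\log k$ in dimension $d = 2$ for $k \ge 2$.

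For the upper bound, which is the delicate half of the argument and where the main obstacle lies, I would rely on Thomson's principle and construct explicit unit flows from $x$ to $y$ on $\bb T^d_N$. In dimension $d \ge 3$ the transience of SSRW on $\bb Z^d$ supplies a finite-energy unit flow from a lift of $x$ to infinity; this flow can be truncated and adapted, using the local isomorphism between balls in $\bb T^d_N$ and balls in $\bb Z^d$, to produce a bounded-energy unit flow from $x$ to $y$ on the torus, giving $R_{\rm eff}(x,y) \le C_d$. In dimension $d = 2$ the analogous flow must route the unit current along roughly $\log k$ approximately parallel paths of length $O(k)$ between $x$ and $y$, which is the standard construction on $\bb Z^2$, yielding energy $O(\log(k+1))$. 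The two-dimensional case is the technically hardest step because the transience argument is unavailable and the logarithmic bound must be obtained from the explicit parallel-path flow construction; this is the one place where the proof cannot be read off directly from the commute time identity and a simple cutset estimate.
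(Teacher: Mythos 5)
Your overall strategy is exactly the one the paper relies on: the authors do not prove this lemma themselves but simply invoke Proposition 10.13 of \cite{peres}, whose proof is precisely the commute-time identity (here $2\,\bb E_x[H(y)] = 2d\,N^d\,R_{\rm eff}(x,y)$ by the symmetry you note) together with matching resistance estimates, the lower bound by Nash--Williams and the upper bound by Thomson's principle with an explicit unit flow. The $d\ge 3$ case and both lower bounds in your sketch are fine.

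There is, however, a quantitative error in the one step you yourself single out as the hardest, namely the $d=2$ upper bound. A unit flow split equally among $m$ essentially disjoint paths of length $O(k)$ has energy of order $k\cdot m\cdot(1/m)^2 = k/m$; with $m\asymp\log k$ paths, as you propose, this gives energy of order $k/\log k$, which is far too large. Moreover, the logarithm in $R_{\rm eff}(x,y)\asymp\log k$ does not come from using logarithmically many paths in the bulk: it comes from the unavoidable funnelling of the flow near the two endpoints, which is exactly what your Nash--Williams cutsets detect. The correct construction fans the unit flow out from $x$ so that at distance $r$ from $x$ it is spread over the $\asymp r$ edges crossing the sphere of radius $r$, each carrying flow $O(1/r)$; the energy contributed at scale $r$ is then $O(r\cdot r^{-2}) = O(1/r)$, and summing over $1\le r\le k/2$ gives $O(\log k)$. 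A symmetric fan collects the flow into $y$, and the transport between the two fans, spread over $\asymp k$ parallel lines each carrying $O(1/k)$, costs only $O(1)$. With this correction (and the routine adaptation from $\bb Z^2$ to the torus for $k$ comparable to $N$), your argument is complete and coincides with the proof of the cited proposition.
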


For $N >0$, let us denote by $\Lambda_N^*$ the cube of length $2N+1$
centered at the origin: $\Lambda_N^* = \{-N,\dots,N\}^d$. Denote by
$\delta \Lambda_N$ its inner boundary: $\delta \Lambda_N = \{x \in
\Lambda_N^* : \exists y \notin \Lambda_N^* \text{ with } |y-x|=1\}$.
The following lemma is Proposition 2.2.2 of \cite{lawler}:

\begin{lemma} 
\label{prop2.2.2}
Let $X(t)$ be the simple symmetric random walk in $\bb Z^d$, $d \geq
3$. Then, there exist constants $0<c_d<C_d <+\infty$ such that for any
set $A \subseteq \Lambda_N^*$ and any $x \in \partial \Lambda_{2N}^*$
we have
\begin{equation*}
c_d \, N^{2-d} \, \Cap(A) \;\leq\; \bb P_x[H_A <+\infty] 
\;\leq\; C_d \, N^{2-d} \, \Cap(A)\; . 
\end{equation*}
\end{lemma}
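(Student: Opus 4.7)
The plan is to derive the bound from the classical representation of hitting probabilities in terms of the equilibrium measure, together with uniform two-sided asymptotics for the Green's function of simple random walk on $\bb Z^d$ in the transient regime.

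First, I would recall that for the transient simple random walk on $\bb Z^d$, $d \ge 3$, with Green's function $G(x,y) = \sum_{n \ge 0} \bb P_x[X_n = y]$, every finite set $A$ carries an equilibrium measure $e_A$ supported on $A$, defined by $e_A(y) = \bb P_y[\tau(A) = \infty]$ for $y \in A$. Two classical identities that I would invoke are
\begin{equation*}
\bb P_x[H(A) < \infty] \;=\; \sum_{y \in A} G(x,y)\, e_A(y) \;, \qquad
\Cap(A) \;=\; \sum_{y \in A} e_A(y) \;,
\end{equation*}
where the second is consistent with Lemma \ref{transient-escape} (the capacity is the total escape measure from $A$, with the convention that $\lambda(y)\nu(y) = 1$ for the simple random walk on $\bb Z^d$ under counting measure).

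Second, I would use the standard Green's function asymptotics: there is a constant $a_d > 0$ such that $G(x,y) = a_d |x - y|^{2-d} (1 + o(1))$ as $|x-y| \to \infty$, and in particular there are constants $0 < c_d' < C_d' < \infty$ with
\begin{equation*}
c_d' \, |x - y|^{2-d} \;\le\; G(x,y) \;\le\; C_d' \, |x - y|^{2-d}
\end{equation*}
for all $x \neq y$. For $x \in \partial \Lambda_{2N}^*$ and $y \in A \subseteq \Lambda_N^*$ one has $N \le |x-y| \le 4 N \sqrt{d}$, so $G(x,y) \asymp N^{2-d}$ uniformly in such $x,y$.

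Third, I would simply combine these two ingredients. Inserting the uniform Green's function bound into the hitting-probability identity gives
\begin{equation*}
c_d' \, (4\sqrt{d}\,N)^{2-d} \sum_{y \in A} e_A(y)
\;\le\; \bb P_x[H(A) < \infty]
\;\le\; C_d' \, N^{2-d} \sum_{y \in A} e_A(y) \;,
\end{equation*}
and the capacity identity turns the sum into $\Cap(A)$, yielding the claim with $c_d = c_d' (4\sqrt{d})^{2-d}$ and $C_d = C_d'$. The one nontrivial input is the uniform two-sided Green's function estimate; I would regard the remainder as bookkeeping. The only delicate point would be justifying the two classical identities above (for this the standard approach is to observe that $f(x) := \sum_{y \in A} G(x,y) e_A(y)$ is harmonic off $A$, equals $1$ on $A$ by a last-exit decomposition, and tends to $0$ at infinity, hence coincides with $\bb P_x[H(A) < \infty]$; evaluating the Dirichlet form then identifies $\Cap(A)$ with $\sum_y e_A(y)$).
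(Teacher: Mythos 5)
Your argument is correct and is precisely the standard proof of the result the paper cites (Proposition 2.2.2 in Lawler's book): last-exit decomposition $\bb P_x[H(A)<\infty]=\sum_{y\in A}G(x,y)e_A(y)$, the identity $\Cap(A)=\sum_{y\in A}e_A(y)$ (consistent with the paper's normalization, cf.\ Lemma \ref{transient-escape} with $\lambda\equiv\nu\equiv 1$), and the uniform two-sided Green's function bounds combined with $N\le |x-y|\le 4\sqrt{d}\,N$. The paper gives no proof of its own and simply defers to the reference, so there is nothing further to compare.
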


Note that $\Cap (A)$ is finite because the random walk is transient.

\subsection{Random walks in $\mathbb{T}^d_N$, $d \geq 3$}

In this subsection we prove some properties of the simple random walk
on $\mathbb{T}^d_N$ which will be used to establish its metastable
behavior. Denote by $\{Y^N_k : k\ge 0\}$ the discrete time,
nearest-neighbor, symmetric random walk on $\bb T^d_N$ and let $\bb
Q^N_x$, $x\in \bb T^d_N$, be the measure on $D(\bb Z_+, \bb T^d_N)$
induced by the random walk $Y^N$ starting from $x$. Expectation with
respect to $\bb Q^N_x$ is denoted by the same symbol.

For a subset $B$ of $\bb T^d_N$, denote by $H(B)$ the entry time in
$B$, defined as
\begin{equation*}
H(B) \;=\; \inf\{k\ge 0 : Y^N_k\in B\}\;.
\end{equation*}

In Lemma~\ref{aug2} below we prove that whenever the simple random
walk starts from a point isolated from the very deep traps,
asymptotically, the next very deep trap to be visited is uniformly
chosen.  In Corollary~\ref{aug1} we obtain the limiting distribution
of the next very deep trap to be visited starting from another very
deep trap. Corollary~\ref{aug3} presents the limit of the capacity
between two points of $\mathbb{T}^d_N$ far apart.

Let $d(x,y)$ be the distance induced by the graph $\bb T^d_N$. For a
subset $\Gamma$ of $\bb T^d$ and $r>0$, denote by $B(\Gamma, r)$ the
set of sites in $\bb T^d_N$ at distance less than or equal to $r$ from
$\Gamma$: $B(\Gamma, r) = \{x\in \bb T^d_N : d(x,\Gamma)\le r\}$; and
denote by $\partial G$ the sites not in $\Gamma$ which are at distance
one from $\Gamma$: $\partial \Gamma = \{x\not\in\Gamma :
d(x,\Gamma)=1\}$. In these definitions we identified $\Gamma$ with its
immersion in $\bb T^d_N$: $\Gamma = \Gamma\cap \bb T^d_N$.

\begin{lemma}
\label{aug2}
Suppose $l_N \uparrow \infty$ and $A^N_M = \{x_1^N,\dots, x_M^N\}
\subset \mathbb{T}^d_N$ are such that, if $i\neq j$, $d(x_i^N,x_j^N)
\geq l_N$. Then, defining $A^N_{M,i}$ to be $A^N_M \setminus
\{x_i^N\}$, we have
\begin{equation*}
\lim_{N\to\infty}
\sup_{y\in B(A^N_M,l_N)^c} \left| \mathbb{Q}^N_{y} \big[ H(x_1^N) 
< H(A^N_{M,1}) \big]  - \frac{1}{M} \right| \;=\; 0 \;. 
\end{equation*}
\end{lemma}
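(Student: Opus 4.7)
Set $h_i(y):=\bb Q^N_y[H(x_i^N)<H(A^N_{M,i})]$ for $1\le i\le M$. Since $\sum_{i=1}^M h_i\equiv 1$ identically on $\bb T^d_N$, the lemma will follow once we prove the stronger assertion
\begin{equation*}
\lim_{N\to\infty}\sup_{1\le i\le M}\;\sup_{y\in B(A^N_M,l_N)^c}\;\Big|\,h_i(y)-\frac{1}{M}\,\Big|\;=\;0\,.
\end{equation*}
The approach will exploit the transience of the walk in $\bb Z^d$, $d\ge 3$, to reduce to a uniform harmonic-measure computation at infinity.

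We will first establish the following $\bb Z^d$ fact. For a finite set $A=\{z_1,\dots,z_M\}\subset\bb Z^d$ with pairwise distances at least $l\to\infty$, the harmonic measure on $A$ from infinity, $e_A(z_i)/\Cap(A)$, converges to the uniform measure $1/M$ uniformly in $i$. Indeed, the event that the walk from $z_i$ never returns to $A$ differs from the event that it never returns to $z_i$ only by the event of reaching some $z_j$ with $j\ne i$, which has probability $O(l^{2-d})$ by Lemma~\ref{prop2.2.2}. Hence $e_A(z_i)=v_d(1+o(1))$ uniformly in $i$, so $\Cap(A)=Mv_d(1+o(1))$ and the harmonic measure from infinity is $1/M+o(1)$.

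To transfer this $\bb Z^d$ computation to the torus, we shall introduce an intermediate scale $r_N$ with $1\ll r_N\ll l_N$, say $r_N=\lfloor l_N^{1/2}\rfloor$, and set $\Sigma_N^j=\partial B(x_j^N,r_N)$, $\Sigma_N=\bigsqcup_{j=1}^M\Sigma_N^j$. The trajectory of $\{Y^N_k\}$ then decomposes into alternating \emph{inward} excursions (inside $\bigcup_j B(x_j^N,r_N)$) and \emph{outward} excursions (in $B(A^N_M,r_N)^c$), separated by crossings of $\Sigma_N$. Coupling $\{Y^N_k\}$ with a walk on $\bb Z^d$ on time scales much smaller than $N^2$ and invoking Lemma~\ref{prop2.2.2}, an inward excursion starting on $\Sigma_N^j$ will be absorbed at $x_j^N$ with probability $q_N=v_d r_N^{2-d}(1+o(1))$ and at any other $x_i^N$ with probability $O(l_N^{2-d})=o(q_N)$. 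On the other hand, each outward excursion is so long (the hitting time of $\Sigma_N$ is of order $N^d$ by Lemma~\ref{captorus}, far exceeding the $O(N^2)$ mixing time of $\{Y^N_k\}$) that the next re-entry into $\Sigma_N$ will be essentially uniformly distributed across its $M$ components, independently of the previous excursion and of the starting point $y\in B(A^N_M,l_N)^c$.

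Combining these two points, successive inward excursions behave like i.i.d.\ trials, each with uniform choice of target component and absorption probability $q_N$. Conditioning on the almost sure absorption, the absorbing site is then uniform on $\{x_1^N,\dots,x_M^N\}$ up to an $o(1)$ error, uniformly in $y$ and $i$, which is precisely the desired conclusion. \textbf{Main obstacle.} The delicate point will be justifying the effectively uniform and excursion-independent distribution of re-entries into $\Sigma_N$. Making this quantitative requires comparing the harmonic measure on $\Sigma_N$ from a point of $B(A^N_M,l_N)^c$ to the uniform measure on $\{1,\dots,M\}$, via capacity estimates for $\bb T^d_N$ deduced from Lemma~\ref{captorus}; the inner $\bb Z^d$ computations of the first step then feed into these torus estimates to close the argument.
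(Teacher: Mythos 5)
Your route (a two-phase excursion decomposition plus uniformity of the harmonic measure of a well-separated set) is genuinely different from the paper's, which never decomposes into excursions: it writes $\bb Q^N_y[H(x_1^N)<H(A^N_{M,1})]$ as a ratio of expected numbers of visits to $x_1^N$, converts these into expected hitting times via the Aldous--Fill identities, and shows that every relevant hitting time is $N^d/v_d\,(1+o(1))$ by first letting the walk mix on the scale $N^{5/2}$ without touching $A^N_M$ (see \eqref{nohit}, \eqref{baba}, \eqref{f18}). Your first step (the $\bb Z^d$ harmonic-measure computation giving $\Cap(A)=Mv_d(1+o(1))$ and uniform equilibrium measure) is correct and is morally the same input as the paper's $v_d=\Cap(\{0\})$. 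However, the step you flag as the main obstacle contains a genuine error, not merely a technical loose end.

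The problem is the claim that ``each outward excursion is so long (the hitting time of $\Sigma_N$ is of order $N^d$ by Lemma~\ref{captorus}, far exceeding the $O(N^2)$ mixing time) that the next re-entry into $\Sigma_N$ will be essentially uniformly distributed across its $M$ components.'' This is true for the first stretch, which starts at $y\in B(A^N_M,l_N)^c$, but every subsequent outward excursion starts at a point adjacent to the ball $B(x_j^N,r_N)$ the walk has just exited. From such a point the walk re-enters the \emph{same} ball with probability $1-O(r_N^{-1})$, and typically almost immediately -- far too soon to mix. The $N^d$-order bounds of Lemma~\ref{captorus} and the $N^d/\Cap$ heuristic apply only from equilibrium or from macroscopically distant starting points, not from $\partial B(x_j^N,r_N)$ itself. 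Consequently the overwhelming majority of your outward excursions neither mix nor change component, and the ``i.i.d.\ uniform trials'' picture breaks down: consecutive inward excursions into the same ball are strongly dependent through the (geometric, with parameter $O(r_N^{-1})$) number of returns before the walk finally escapes. The repair is the standard second scale: fix $R_N$ with $r_N\ll R_N\le l_N/2$, group all oscillations between $x_j^N$ and $\partial B(x_j^N,R_N)$ into a single ``deep visit'', and note that the absorption probability at $x_j^N$ per deep visit is asymptotically the same for every $j$ by translation invariance of the local picture and Lemma~\ref{prop2.2.2} (this is where $v_d$ and transience enter). Only the excursions that actually reach $\partial B(x_j^N,R_N)$ are candidates for your mixing argument, and even for those you still need a ``mix before hitting'' statement of the type the paper proves in \eqref{nohit}--\eqref{baba}. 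With that regrouping your argument closes and yields $1/M$; as written, the key quantitative claim is false.
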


\begin{proof}
The proof is based on the fact that a site is reached on the scale
$N^d$ and equilibrium is reached on the scale $N^2$. Hence, in
an intermediate scale, the process has not reached $A^N_M$ and is in
equilibrium. In particular, it has a probability $1/M$ to attain
$x^N_1$ before the set $A^N_{M,1}$.

First, we prove that the process does not reach a site in the scale
$N^{5/2}$:
\begin{equation}
\label{nohit}
\lim_{N \rightarrow \infty} \sup_{y; d(y,0) \geq l_N}
\mathbb{Q}^N_y \big[ H(0) < N^{5/2} \big] \;=\; 0\;. 
\end{equation}
By the strong Markov property,
\begin{equation*}
\begin{split}
\mathbb{Q}^N_y[H(0) < N^{5/2}] & \;\leq\; \mathbb{Q}^N_y[H(0) <
H(\partial B(0,N/8))] \\ 
& \;+\; \sup_{z \in \partial B(0,N/8)}\mathbb{Q}^N_z[H(0) < N^{5/2}]\;.
\end{split}
\end{equation*}
Since $d(y,0) \geq l_N$, by Lemma \ref{prop2.2.2}, the first term in
the sum above is bounded by $C_0/l_N^{d-2}$ for some finite constant
$C_0$ independent of $N$. We can therefore suppose in \eqref{nohit}
that $l_N = N/8$.

Denote by $\{R_k : k\ge 1\}$ and $\{D_k : k\ge 1\}$ the successive
return and departure times between $B_1 = B(0,N/8)$ and $B_2 = B(0,N/4)$:
\begin{equation*}
\begin{aligned}
R_1 &= H_{B_1} & D_1 & = R_1 + H_{B_2^c} \circ \theta_{R_1}\\
R_n &= D_{n-1} + H_{B_1} \circ \theta_{D_{n-1}} & D_n & = R_n + H_{B_2^c}
\circ \theta_{R_n} \;, \; n\ge 2\;.
\end{aligned}
\end{equation*}
Here $\theta_s : D(\mathbb{Z}_+, \mathbb{T}^d_N) \rightarrow D(\mathbb{Z}_+,
\mathbb{T}^d_N)$ is the shift map given by $Y_t \circ \theta_s = Y_{s + t}$.

For $y$ such that $d(y,0) \geq N/8$, by the strong Markov property,
\begin{equation}
\label{hit52}
\begin{split}
\mathbb{Q}^N_y[H(0) < N^{5/2}] & \;\leq\; \mathbb{Q}^N_y[R_{N^{d-9/4}}
\leq N^{5/2}]  \;+\; \mathbb{Q}^N_y[ H(0) < R_{N^{d-9/4}} ] \\
& \;\leq\; \sup_{z; d(z,0) \geq N/8} \mathbb{Q}^N_z[R_{N^{d-9/4}}  
\leq N^{5/2}]\\
& \;+\;  N^{d-9/4} \sup_{z \in \partial B(0,N/8)}
\mathbb{Q}^N_z[H(0) < H(\partial B(0,N/4))] \;.
\end{split}
\end{equation}
The right hand side does not depend on the choice of $y$ and tends to
zero as $N \uparrow \infty$, by \cite{benjamini}, Proposition~1.1 with
$u = N^{-1/2}$, and Lemma \ref{prop2.2.2}. This proves (\ref{nohit}).

In the time scale $N^{5/2}$ the process reaches equilibrium.  More
precisely, denote by $\pi^N$ the uniform probability measure on $\bb
T^d_N$ and by $\Vert \mu - \nu\Vert$ the total variation distance
between two measures $\mu$, $\nu$ on $\bb T^d_N$.  By Corollary 5.3
and equation (5.9) in \cite{peres}, for an arbitrary
sequence $y^N \in B(A^N_M, l_N)^c$, 
\begin{equation}
\label{baba}
\lim_{N\to\infty}
\Big\lVert \mathbb{Q}^N_{y^N}[Y_{N^{5/2}} = \cdot] - \pi^N(\cdot)
\Big\rVert \;=\; 0\;.
\end{equation}

In particular, for $1\le i\le M$ and for an arbitrary sequence $y^N$
in $B(A^N_M, l_N)^c$,
\begin{equation}
\label{fl01}
\lim_{N\to\infty} \frac 1{N^d} \Big| \, \bb Q^N_{y^N} [H(x^N_i)] - \bb
Q^N_{\pi^N}[H(x^N_i)]  \,\Big| \;=\;0\;.
\end{equation}
To prove this claim, fix $1\le i\le M$ and introduce the indicators of
the sets $H(x^N_i) < N^{5/2}$, $H(x^N_i) \ge N^{5/2}$, to obtain that
\begin{equation*}
\bb Q^N_{y^N} [H(x^N_i)] \;=\;
\bb Q^N_{y^N} \Big[\, \bb Q^N_{Y_{N^{5/2}}}[H(x^N_i)]\,\Big] \;+\; R_N\;, 
\end{equation*}
where the remainder $R_N$ is absolutely bounded by
\begin{equation*}
N^{5/2}\; +\; \sup_{z\in\bb T^d_N} \bb Q^N_{z} [H(x^N_i)]
\, \mathbb{Q}^N_{y^N}[H(x^N_i) < N^{5/2}]\;.
\end{equation*}
Hence,
\begin{equation*}
\bb Q^N_{y^N} [H(x^N_i)] - \bb Q^N_{\pi^N}[H(x^N_i)] 
\;=\; \bb Q^N_{y^N} \Big[ \bb Q^N_{Y_{N^{5/2}}}[H(x^N_i)] 
- \bb Q^N_{\pi^N}[H(x^N_i)]  \, \Big] \;+\; R_N
\end{equation*}
is absolutely bounded by
\begin{equation*}
N^{5/2} \;+\; \sup_{z\in\bb T^d_N} \bb Q^N_{z} [H(x^N_i)]
\Big\{ \mathbb{Q}^N_{y^N}[H(x^N_i) < N^{5/2}] \;+\;
\Big \lVert \mathbb{Q}^N_{y^N}[Y_{N^{5/2}} = \cdot] - 
\pi^N(\cdot) \Big\rVert \Big\} \;.
\end{equation*}
By \eqref{nohit}, \eqref{baba} and Lemma \ref{captorus}, this
expression divided by $N^d$ vanishes as $N\uparrow\infty$. This proves
\eqref{fl01}. 

Recall that $v_d$ stands for the probability that a symmetric,
nearest-neighbor random walk on $\bb Z^d$ never returns to its
starting point. By the estimate on the expected hitting time (3.2) of
\cite{david2}, it follows from \eqref{fl01} that
\begin{eqnarray}
\label{f18}
\!\!\!\!\!\!\!\!\!\!\!\!\!\!
&& \lim_{N\to\infty}  \frac{1}{N^d} \bb Q^N_{y^N}[H(x^N_i)] \;=\;
\frac{1}{v_d}, \text { and, analogously, for $j\not =1$, } \\ 
\!\!\!\!\!\!\!\!\!\!\!\!\!\!
&& \quad \lim_{N\to\infty} \frac{1}{N^d} \bb Q^N_{x^N_j}[H(x^N_1)] \;=\;
\frac{1}{v_d} \text{ and } \lim_{N\to\infty} \frac{1}{N^d}
\bb Q^N_{x_1^N}[H(A^N_{M,1})] \;=\; \frac{1}{v_d(M-1)}\; ,
\nonumber
\end{eqnarray}
because, by Lemma \ref{transient-escape}, the escape probability $v_d$
equals the capacity between the origin and infinity in $\mathbb{Z}^d$.
Note that $\lambda(0)=1$ and that the capacity is computed with
respect to the counting measure. 

Define $S$ to be the stopping time given by the first visit to $y^N$
after the first visit to $x_1^N$, after visiting $A^N_{M,1}$. By the
strong Markov property, $\bb Q^N_{y^N}[S]$ is equal to
\begin{equation*}
\bb Q^N_{y^N} [H(A^N_{M,1})] \;+\; \sum_{j \neq 1}
\mathbb{Q}^N_{y^N}[Y_{H(A^N_{M,1})}=x_j^N] \, 
\bb Q^N_{x^N_j}[H(x_1^N)] \;+\; \bb Q^N_{x^N_1}[H(y^N)] \; .
\end{equation*}
Hence, by \eqref{f18}, 
\begin{equation}
\label{f19}
\lim_{N\to\infty}  \frac{1}{N^d} \bb Q^N_{y^N}[S] \;=\;
\frac 1 {v_d} \Big( \frac{1}{M-1} + 2 \Big)\;.
\end{equation}

We are now in a position to prove the lemma. Fix an arbitrary sequence
$y^N$ in $B(A^N_M, l_N)^c$.  Since $\bb Q^N_{y^N}[\#\{\text{visits to
  $x_1^N$ before $H(A^N_{M,1})$}\}] = \bb Q^N_{y^N}[\#\{\text{visits to
  $x_1^N$}$ before $H(A^N_{M,1})\} \, \mb 1\{H(x_1^N) <
H(A^N_{M,1})\}]$, by the strong Markov property,
\begin{equation}
\label{f17}
\mathbb{Q}^N_{y^N} [H(x_1^N) < H(A^N_{M,1})] 
\;=\; \frac{\bb Q^N_{y^N}[\#\{\text{visits to $x_1^N$ before
    $H(A^N_{M,1})$}\}]}
{\bb Q^N_{x_1^N}[\#\{\text{visits to $x_1^N$ before $H(A^N_{M,1})$}\}]} \; \cdot
\end{equation}
To estimate the numerator, observe that, by \cite{aldous}, Chapter~2,
Proposition~3 and Lemma~7,
\begin{eqnarray*}
\!\!\!\!\!\!\!\!\!\!\!\!\!\!\!\!\!
&& \frac{1}{N^d} \bb Q^N_{y^N}[S] \;=\;
\bb Q^N_{y^N} \big[ \#\{\text{visits to $x_1^N$ before $S$}\} \big] \\
\!\!\!\!\!\!\!\!\!\!\!\!\!\!\!\!\!
&& =\; \bb Q^N_{y^N} \big [\#\{\text{vis. to $x_1^N$ bef.
  $H(A^N_{M,1})$}\} \big] \;+\; \bb Q^N_{x_1^N} \big [\#\{\text{vis. to
  $x_1^N$ bef. $H(y^N)$}\} \big] \\ 
\!\!\!\!\!\!\!\!\!\!\!\!\!\!\!\!\!
&& =\; \bb Q^N_{y^N} \big[\#\{\text{vis. to $x_1^N$ bef.
  $H(A^N_{M,1})$}\} \big] \;+\; \frac{1}{N^d}\Big\{\bb Q^N _{x_1^N}[H(y^N)]
+ \bb Q^N_{y^N}[H(x_1^N)] \Big\}\;.  
\end{eqnarray*}
Hence, the right hand side of \eqref{f17} can be written as
\begin{equation*}
\frac{N^{-d} \Big( \bb Q^N_{y^N}[S] - \bb Q^N_{x_1^N}[H(y^N)] -
\bb Q^N_{y^N}[H(x_1^N)]\Big)}
{\bb Q^N_{x_1^N}[\#\{\text{visits to $x_1^N$ before $H(x_1^N)\circ 
\theta_{H(A^N_{M,1})} + H(A^N_{M,1})$}\}]}\; \cdot
\end{equation*}
Let $S'$ be the stopping time $H(A^N_{M,1}) + H(x_1^N)\circ
\theta_{H(A^N_{M,1})}$. By Lemma~7, Chapter~2 in \cite{aldous}, the
denominator is equal to $N^{-d} \bb Q^N_{x_1^N}[S']$. Hence, by the
strong Markov property on $H(A^N_{M,1})$, the previous ratio is equal
to
\begin{equation*}
\frac{ N^{-d} \Big( \bb Q^N_{y^N}[S] - \bb Q^N_{x_1^N}[H(y^N)] -
\bb Q^N_{y^N}[H(x_1^N)]\Big)} 
{ N^{-d} \Big( \bb Q^N_{x_1^N}[H(A^N_{M,1})] + {\displaystyle 
\sum_{j \neq 1}} \mathbb{Q}^N_{x_1^N}[Y_{H(A^N_{M,1})} = x_j^N]
\, \bb Q^N_{x_j^N}[H(x_1^N)] \Big)} \; \cdot
\end{equation*}
By \eqref{f18}, \eqref{f19}, this expression converges to $M^{-1}$ as
$N\uparrow\infty$.  Since the sequence $y^N \in B(A^N, l_N)^c$ is
arbitrary, we are done.
\end{proof}

For a subset $F$ of $\bb T^d_N$, let $\hat \tau(F)$ be the return time
to $F$ for the discrete time random walk $\{Y^N_k : k\ge 0\}$:
\begin{equation*}
\hat \tau(F) \;=\; \inf\{k\ge 1 : Y^N_k \in F\}\;.
\end{equation*}
For subsets $A$, $B$ of $\bb T^d_N$ such that $A\cap B = \phi$, let
$\Cap_{Y^N} (A,B)$ be the capacity between $A$ and $B$ induced by the
process $Y^N$:
\begin{eqnarray*}
\Cap_{Y^N} (A,B) \; =\; \inf_f \frac 1{4d} \sum_{x\in \bb T^d_N}
\sum_{y\sim x} [f(y) - f(x)]^2\;,
\end{eqnarray*}
where the infimum is carried over all functions $f: \bb T^d_N \to
\bb R$ such that $f(x) =1$ for all $x$ in $A$, $f(x)=0$ for all $x$ in
$B$.

\begin{corollary}
\label{aug1}
Under the same conditions of Lemma~\ref{aug2}, for $j \neq 1$,
\begin{equation*}
\lim_{N\to\infty} \mathbb{Q}^N_{x_j^N}[H(x_1^N) < 
\hat \tau(A^N_{M,1})] \;=\;  \frac{v_d}{M}\; \cdot 
\end{equation*}
\end{corollary}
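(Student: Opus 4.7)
The approach is to apply the strong Markov property at time $1$ and at an appropriate escape time from $x_j^N$, reducing the problem to Lemma~\ref{aug2} applied at an intermediate scale. The intuition is that, for $d\ge 3$, the walk started at $x_j^N$ escapes its vicinity without returning with asymptotic probability $v_d$; conditional on escape, Lemma~\ref{aug2} says the next trap hit is uniformly distributed over $\{x_1^N,\ldots,x_M^N\}$ (probability $1/M$); in the complementary event the walk returns to $x_j^N\in A^N_{M,1}$ first, so $x_1^N$ is not hit first. Multiplying gives $v_d\cdot(1/M)$.

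Fix a sequence $r_N\to\infty$ with $r_N\le\min(\lfloor\sqrt{l_N}\rfloor,\lfloor N/4\rfloor)$, so $r_N=o(l_N)$ and $r_N<N/2$. Let $S_N=\{z\in\bb T^d_N:d(z,x_j^N)=r_N\}$. Since $l_N>r_N$, for $N$ large $\{z:d(z,x_j^N)\le r_N\}\cap A^N_M=\{x_j^N\}$, and every neighbor of $x_j^N$ lies outside $A^N_M$. For a neighbor $y$ of $x_j^N$, set $T=\inf\{k\ge 0:Y^N_k\in\{x_j^N\}\cup S_N\}$. By continuity of the graph distance, for $k<T$ the walk lies in $\{z:d(z,x_j^N)<r_N\}$, which contains no point of $A^N_M\cup\{x_1^N\}$ other than $x_j^N$. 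Hence on $\{Y^N_T=x_j^N\}$ one has $H(A^N_{M,1})\le T<H(x_1^N)$ and the event $\{H(x_1^N)<H(A^N_{M,1})\}$ fails. Applying the strong Markov property first at time $1$ and then at $T$,
\begin{equation*}
\bb Q^N_{x_j^N}\big[H(x_1^N)<\hat\tau(A^N_{M,1})\big]
\;=\; \frac{1}{2d}\sum_{y\sim x_j^N}
\bb E^N_y\Big[\mathbf{1}\{Y^N_T\in S_N\}\,
\bb Q^N_{Y^N_T}\big[H(x_1^N)<H(A^N_{M,1})\big]\Big]\;.
\end{equation*}

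For $z\in S_N$ we have $d(z,x_j^N)=r_N$ and $d(z,x_i^N)\ge l_N-r_N\ge r_N$ for $i\ne j$, so $d(z,A^N_M)=r_N$. Since the hypotheses of Lemma~\ref{aug2} are monotone in its separation parameter and $r_N\le l_N$, $r_N\to\infty$, Lemma~\ref{aug2} applies with $r_N$ in place of $l_N$ and yields $\bb Q^N_z[H(x_1^N)<H(A^N_{M,1})]=1/M+o(1)$ uniformly in $z\in S_N$. Moreover, because $r_N<N/2$, the ball $\{z:d(z,x_j^N)\le r_N\}\subset\bb T^d_N$ is graph-isomorphic via translation to the corresponding ball in $\bb Z^d$, so the escape probability $\bb Q^N_y[H(S_N)<H(x_j^N)]$ coincides with the analogous probability for simple random walk on $\bb Z^d$ started at the neighbor $y-x_j^N$ of the origin. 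By transience ($d\ge 3$) and monotone convergence this tends as $r_N\to\infty$ to the probability of never hitting $0$ from that neighbor; by the isometry symmetries of $\bb Z^d$ this probability is the same for every neighbor of $0$, and by averaging over the $2d$ neighbors in the one-step decomposition $v_d=\bb P_0[\bb Y_k\ne 0\;\forall k\ge 1]$ the common value is $v_d$. Inserting both asymptotics into the displayed identity yields the claimed limit $v_d/M$.

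The main technical point is the reapplication of Lemma~\ref{aug2} at the intermediate scale $r_N<l_N$: its proof uses $l_N$ only to control the mutual separation of $A^N_M$ and to ensure that a walk starting at distance $\ge l_N$ from the target trap does not reach it within the mixing time $N^{5/2}$, both of which are monotone in $l_N$ and therefore remain valid when $l_N$ is replaced by any $r_N\le l_N$ tending to infinity.
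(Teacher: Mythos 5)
Your proof is correct and follows essentially the same route as the paper's: decompose at the escape time from a neighborhood of $x_j^N$, identify the escape probability with $v_d$ via transience of the walk on $\bb Z^d$, and apply Lemma~\ref{aug2} from the escape boundary via the strong Markov property. Your use of an intermediate radius $r_N = o(l_N)$ is a sensible refinement (the paper escapes to $\partial B(x^N_j,l_N)$, whose points need not lie in $B(A^N_M,l_N)^c$), modulo the trivial adjustment that points of $S_N$ are at distance exactly $r_N$ from $A^N_M$, so one should invoke the lemma with parameter $r_N-1$.
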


\begin{proof}
Since
\begin{equation*}
\lim_{N\to\infty} \mathbb{Q}^N_{x^N_j}[H(\partial B(x^N_j,l_N)) < 
\hat \tau(x^N_j)]  \;=\; v_d\;, 
\end{equation*}
the result follows from the strong Markov property and
Lemma~\ref{aug2}.
\end{proof}

\begin{corollary}
\label{aug3}
If $l_N \uparrow \infty$ and $x^N, y^N \in \mathbb{T}^d_N$, are such
that $d(x^N, y^N) \geq l_N$, 
\begin{equation}
\lim_{N\to\infty} \Cap_{Y^N}(x^N,y^N) \;=\; \frac{v_d}{2}\;\cdot
\end{equation}
\end{corollary}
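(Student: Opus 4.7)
The plan is to express the capacity as a hitting probability and then decompose this probability via the strong Markov property at an intermediate scale, combining a local transience estimate with Lemma \ref{aug2}.

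First I would reduce the capacity to a hitting probability. The harmonic interpolation $f(z)=\bb Q^N_z[H(y^N)<H(x^N)]$ realizes the infimum in the variational formula defining $\Cap_{Y^N}(x^N,y^N)$; since the reversible measure of $Y^N$ is counting measure with unit holding time, a one-step computation (the discrete-time analogue of Lemma \ref{escape}, in which only the vertex $y^N$ contributes to the Dirichlet form of $f$) yields
\begin{equation*}
\Cap_{Y^N}(x^N,y^N) \;=\; \bb Q^N_{y^N}\bigl[\,H(x^N)<\hat\tau(y^N)\,\bigr].
\end{equation*}

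Next I would choose an intermediate radius $r_N := \lfloor\sqrt{l_N}\rfloor$, so that both $r_N\to\infty$ and $l_N - r_N\to\infty$, and introduce the exit time $\sigma_N := H(\partial B(y^N,r_N))$. Since $x^N\notin B(y^N,r_N)$ for $N$ large, any trajectory realizing $\{H(x^N)<\hat\tau(y^N)\}$ must first exit this ball, so by the strong Markov property at $\sigma_N$,
\begin{equation*}
\bb Q^N_{y^N}\bigl[H(x^N)<\hat\tau(y^N)\bigr] \;=\; \sum_{w\in\partial B(y^N,r_N)}\bb Q^N_{y^N}\bigl[Y^N_{\sigma_N}=w,\,\sigma_N<\hat\tau(y^N)\bigr]\,\bb Q^N_w\bigl[H(x^N)<H(y^N)\bigr].
\end{equation*}
For every $w\in\partial B(y^N,r_N)$ one has $d(w,y^N)=r_N$ and, by the triangle inequality, $d(w,x^N)\ge l_N - r_N$, both tending to infinity. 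Consequently Lemma \ref{aug2}, applied with $M=2$, $A^N_M=\{x^N,y^N\}$ and the growing scale $\min(r_N,l_N-r_N)$ in the role of $l_N$, delivers $\bb Q^N_w[H(x^N)<H(y^N)]=\tfrac12+o(1)$ uniformly in such $w$.

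Finally, since $r_N=o(N)$ the ball $B(y^N,r_N)\subset\bb T^d_N$ is graph-isomorphic to the corresponding ball in $\bb Z^d$, so $\bb Q^N_{y^N}[\sigma_N<\hat\tau(y^N)]$ equals the probability that the random walk $\{\bb Y_k\}$ on $\bb Z^d$ starting at the origin leaves $B(0,r_N)$ before returning to $0$. As $r_N\to\infty$ these events increase to $\{\bb Y_k\neq 0\ \forall k\ge 1\}$, whose probability is $v_d$ by definition (here the transience $v_d>0$ of the walk in dimension $d\ge 3$ is used). Substituting back gives $\Cap_{Y^N}(x^N,y^N)\to v_d/2$, as claimed. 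The main technical point is the uniform control over $\partial B(y^N,r_N)$ in the application of Lemma \ref{aug2}, which is precisely why the intermediate radius $r_N$ must be chosen to tend to infinity but strictly slower than $l_N$.
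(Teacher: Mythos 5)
Your proposal is correct and follows essentially the same route as the paper: the identity $\Cap_{Y^N}(x^N,y^N)=\bb Q^N_{y^N}[H(x^N)<\hat\tau(y^N)]$ is Lemma \ref{escape}, and your strong-Markov decomposition at the exit of an intermediate ball, combined with the local transience estimate and Lemma \ref{aug2}, is precisely the paper's Corollary \ref{aug1} with $M=2$, which you have simply inlined. (Only a cosmetic slip: the events of exiting $B(0,r_N)$ before returning to the origin \emph{decrease}, not increase, to the event of never returning; the conclusion is unaffected.)
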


\begin{proof}
The corollary is a direct application of Lemma \ref{escape} and
Corollary~\ref{aug1}. 
\end{proof}

\subsection{Random walk on $\mathbb{T}^d_N$ for $d \geqslant 2$}

We present similar results to the ones stated in the previous
subsection, but which also hold in dimension $2$. Here, however, we
need to impose that the distances $l_N$, between the very deep
traps and the starting point of the walk, grow close to linearly.

Although Lemma~\ref{aug5} below also holds for $d \geqslant 3$ and
could be used in place of Lemma~\ref{aug2}, we keep both results since
they are based on different arguments.  Let $l_N$ be an increasing
sequence such that $l_N/N \rightarrow 0$ and $l_N/N^\alpha \rightarrow
\infty$ for every $\alpha < 1$. In this way,
\begin{equation}
\label{fc02}
\lim_{N\to\infty} \frac{l_N}N \;=\; 0\;, \quad
\lim_{N\to\infty} \frac{\log l_N}{\log N} \;=\; 1 \;. 
\end{equation}

In this subsection it will be more convenient to work with the distance
$d_2(x, y)$ in $\mathbb{T}^d_N$ given by $N$ times the Euclidean distance
between $x$ and $y$ in $\mathbb{T}^d$.

\begin{lemma}
\label{aug5}
Consider a sequence of sets $A^N_M = \{x_1^N, \dots, x_M^N\} \subset
\mathbb{T}^d_N$ such that $d_2(x_i,x_j) \geqslant l_N$ {\rm (}$0
\leqslant i < j \leqslant M${\rm )} for some sequence $\{l_N : N\ge
1\}$ satisfying \eqref{fc02}. Then,
\begin{equation*}
\lim_{N \rightarrow \infty} \sup_{y \in B(A_M^N,l_N)^c} \left|
\, \mathbb{Q}^N_{y} \Big[H(x_1^N) < H(A_{M,1}^N) \Big] - \frac{1}{M}
\right| \;=\; 0\;. 
\end{equation*}
\end{lemma}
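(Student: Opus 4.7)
The plan is to reduce the statement to two separate assertions: spatial uniformity of the harmonic measure of $A^N_M$ in the starting point, and identification of the common limiting value as $1/M$. Set $h_i(y) = \mathbb{Q}^N_y[Y_{H(A^N_M)} = x_i^N]$ for $i = 1,\dots,M$; each $h_i$ is harmonic on $\mathbb{T}^d_N \setminus A^N_M$ with $h_i(x_j^N) = \delta_{ij}$, and $\sum_i h_i \equiv 1$. It suffices to prove that $h_i(y) \to 1/M$ uniformly for $y \in B(A^N_M, l_N)^c$.

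First, I would establish spatial uniformity: $\sup_{y, y' \in B(A^N_M, l_N)^c} |h_i(y) - h_i(y')| \to 0$. Couple two copies of $\{Y^N_k\}$ started from $y$ and $y'$ by the standard coordinate-wise reflection coupling on $\mathbb{T}^d_N$, whose coupling time $T_c$ satisfies $\mathbb{E}[T_c] = O(N^2)$ with adequate tail decay. By Lemma~\ref{captorus} and the strong Markov property, $\mathbb{E}^N_y[H(A^N_M)]$ is of order $N^d$ in dimension $d\ge 3$ and of order $N^2 \log l_N$ in dimension $2$. Under \eqref{fc02} one has $\log l_N \to \infty$, so an intermediate scale $t_N$ exists with $N^2 \ll t_N \ll \mathbb{E}^N_y[H(A^N_M)]$ (e.g.\ $t_N = N^2 \sqrt{\log l_N}$ in $d = 2$). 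A Markov inequality combined with a tail estimate for $T_c$ then shows that, with probability $1-o(1)$, the two coupled walks meet before either enters $A^N_M$, yielding $|h_i(y) - h_i(y')| = o(1)$.

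Second, writing $h_i(y) = c_i + o(1)$ uniformly on $B(A^N_M, l_N)^c$, I would identify $c_i = 1/M$ by adapting the stopping-time argument used in the proof of Lemma~\ref{aug2}: for a fixed $y \in B(A^N_M, l_N)^c$ introduce the analogous stopping times $S$, $S'$ involving first visits to $y$, $x_1^N$, and $A^N_{M,1}$, and compute the expected number of visits to $x_1^N$ before $H(A^N_{M,1})$ in two ways. The crucial input, replacing the constant $v_d$ of the $d \ge 3$ case, is the torus hitting-time asymptotic in dimension $2$: $\mathbb{E}^N_y[H(z)] = (2N^2/\pi)\log d_2(y,z) + O(N^2)$, uniformly in pairs $(y, z)$ with $d_2(y, z) \ge l_N$. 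Under \eqref{fc02} the leading term $(2N^2/\pi)\log N$ is asymptotically the same for all such pairs, and substitution into the stopping-time identity yields $c_i = 1/M$.

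The main obstacle lies in the second step, specifically in obtaining the hitting-time asymptotic on $\mathbb{T}^2_N$ with relative error $o(\log N)$ uniformly over pairs at distance at least $l_N$; Lemma~\ref{captorus} gives only two-sided multiplicative bounds. I would obtain the required precision either from the explicit Fourier-series representation of the torus Green's function or from a refinement of the arguments in \cite{lawler}, in either case making essential use of $\log l_N/\log N \to 1$ to absorb the error terms.
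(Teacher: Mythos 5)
Your route is genuinely different from the paper's. The paper alternates ``mixing'' blocks of length $LN^2$ with ``hitting attempts'' of length $\gamma h^d_N$, reduces the problem to the law, under the stationary measure $\pi^N$, of the first trap visited in a single attempt window, and then obtains the value $1/M$ essentially for free from translation invariance of the torus: $\mathbb{Q}^N_{\pi^N}[y\in Y_{[b_0,a_1]}]$ does not depend on $y$, and the error is the probability of visiting two traps in one window, which vanishes as $\gamma\downarrow 0$ by \eqref{nohit2}. No sharp constant is ever needed. Your plan --- reflection coupling to get spatial uniformity of the harmonic measure, then the visit-counting identity of Lemma~\ref{aug2} fed with precise two-dimensional hitting asymptotics --- is viable in principle, but it trades the paper's soft symmetry argument for heavy quantitative input, and two of the steps are not justified as written.

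First, in the coupling step, the assertion that ``with probability $1-o(1)$ the two coupled walks meet before either enters $A^N_M$'' requires an upper bound on $\mathbb{Q}^N_y[H(A^N_M)\le t_N]$ for your intermediate scale $t_N$. Markov's inequality and Lemma~\ref{captorus} only give that the \emph{expected} hitting time is of order $N^2\log l_N\gg t_N$; a large mean does not prevent the hitting time from being small with non-negligible probability, so no Chebyshev-type argument closes this. What is needed is the lower-tail estimate \eqref{nohitN2}, which in $d=2$ the paper derives from the invariance principle together with estimate (225) of \cite{bcm1}; this input must be supplied explicitly.

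Second, the visit-counting identity from Lemma~\ref{aug2} unavoidably involves $\mathbb{E}_{x_1^N}[H(A^N_{M,1})]$ and $\mathbb{E}_y[H(A^N_{M,1})]$, i.e.\ hitting times of a \emph{union} of $M-1$ well-separated points, which must be shown to equal $\tfrac{2}{\pi}\tfrac{N^2\log N}{M-1}(1+o(1))$. The single-point asymptotic $\mathbb{E}_y[H(z)]=\tfrac{2}{\pi}N^2\log d_2(y,z)+O(N^2)$, which you correctly identify as standard (torus Green's function), does not yield this; you additionally need an additivity statement for the effective capacities of well-separated points on $\mathbb{T}^2_N$, or an exponential approximation for $H(A^N_{M,1})$ in the spirit of \cite{david2}. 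This is doable but is precisely the kind of sharp, set-level estimate that the paper's stationarity-and-symmetry argument is designed to bypass; as it stands your proposal locates the ``main obstacle'' in the single-point asymptotic, which is the easier of the two missing ingredients.
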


Since the result only concerns the first point in $A_M^N$ to be
visited, we can suppose that the process $\{Y_k : k\ge 0\}$ is a lazy
random walk in $\mathbb{T}^d_N$, i.e. with probability one half $Y$
does not move, otherwise it jumps uniformly to one of its neighbors.
Before going into the proof of Lemma~\ref{aug5}, we collect some
properties of the hitting and mixing times of $Y$.

Recall that $|| \cdot ||$ denotes the total variation distance between
two probability measures. The following bound on the mixing time on
the torus follows, for instance, from Corollary 5.3 and equation (5.9)
in \cite{peres}.
\begin{equation}
\label{mixtime}
 \lim_{\beta \rightarrow \infty} \limsup_{N\rightarrow \infty} \big
 \lVert \mathbb{Q}^N_y [Y_{\beta N^2} = \cdot ] - \pi^N(\cdot) \big
 \rVert = 0\;. 
\end{equation}
Of course, the same result holds for any sequence $\{t_N:N\ge 1\}$
which increases to $\infty$ faster than $N^2$.

We claim that for every $\beta > 0$ and $x^N, y^N \in
\mathbb{T}^d_N$ such that $d_2(x^N,y^N) \geqslant l_N$ 
\begin{equation}
\label{nohitN2}
\lim_{N\to\infty} \mathbb{Q}^N_{y^N} [H(x^N) \leqslant \beta N^2] 
\;=\; 0\;. 
\end{equation}
Since for $d \geqslant 3$ this statement follows from (\ref{nohit}),
we concentrate in the case $d = 2$. Recall that $\mathbb{Y}$ stands
for the simple random walk on $\mathbb{Z}^2$ (with law $\mathbb{P}$),
and denote by $\phi_N:\mathbb{Z}^2 \rightarrow \mathbb{T}^2_N$ the
canonical projection. In view of the invariance principle in
$\mathbb{Z}^2$, it is enough to prove that for every $R \geqslant 0$,
\begin{equation*}
\lim_{N\to\infty}
 \mathbb{P}_{\bar y^N} \big[\phi_N(\mathbb{Y}) \text{ visits $x^N$
   before $\mathbb{Y}$ exits $B(\bar y^N, R N)$}\big] \;=\; 0\;,
\end{equation*}
where $\bar y^N \in \mathbb{Z}^2$ is such that $\phi_N(\bar y^N) = y^N
\in \mathbb{T}^2_N$.  This follows, for instance, from \cite{bcm1},
(225) since for every point $\bar x^N \in B(\bar y^N, R N)$ such that
$\phi(\bar x^N) = x^N$, we have $|\bar y^N - \bar x^N| \geqslant l_N$
and, moreover, the number of possible choices for $\bar x^N$ is
bounded uniformly on $N \geqslant 1$. This establishes (\ref{nohitN2})
for $d = 2$.

A straightforward consequence of \eqref{nohitN2} is that
\begin{equation}
\label{fc10}
\lim_{N\to\infty} \mathbb{Q}^N_{\pi^N} [H(x^N) \leqslant \beta N^2] 
\;=\; 0 
\end{equation}
for any sequence $\{x^N : N\ge 1\}$ in $\bb T^d_N$.

Now we can state the hitting time estimate we will use during the
proof. Consider the scales
\begin{equation*}
h^d_N = \begin{cases}
N^2 \log N & \text{if $d = 2$},\\
N^d & \text{if $d \geqslant 3$}\; .
\end{cases}
\end{equation*}
We claim that for every $d \geqslant 2$, 
\begin{equation}
\label{nohit2}
\lim_{\gamma \rightarrow 0} \limsup_{N \rightarrow \infty} \,\,\,
\sup_{x,y; d_2(x,y) > l_N} \,\, \mathbb{Q}^N_y[H(x) < \gamma h^d_N] =
0 \; . 
\end{equation}

Indeed, fix $\delta > 0$. By (\ref{nohitN2}), for any $\beta>0$, for
$N$ sufficiently large, and for any $x$, $y$ such that $d_2(x,y) >
l_N$,
\begin{equation*}
\mathbb{Q}^N_y[H(x) < \gamma h^d_N] 
\;\le\; \delta \;+\; \mathbb{Q}^N_y 
\big [  H(x) \ge \beta N^2 \,,\, H(x) < \gamma h^d_N \big]\;.
\end{equation*}
On the set $H(x) \ge \beta N^2$, $H(x) = \beta N^2 + H(x) \circ
\theta_{\beta N^2}$. Therefore, by the Markov property at $\beta N^2$,
the second term is less than or equal to
\begin{equation*}
E_{\mathbb{Q}^N_y} \Big[ \mathbb{Q}^N_{Y_{\beta N^2}} 
\big [  H(x) < \gamma h^d_N \big] \, \Big]\;.
\end{equation*}
By (\ref{mixtime}), for $\beta$ large enough, this expression is
bounded by
\begin{equation*}
\delta \;+\; \mathbb{Q}^N_{\pi^N} \big[ H(x) < \gamma h_N^d \big] \;. 
\end{equation*}
The result follows now from Theorem~2.1 of \cite{david2} and Lemma
\ref{captorus}.

Finally, we claim that for every positive $\gamma$, the probability to
hit a very deep trap before $\gamma h^d_N$ is bounded away from zero
in $N$. More precisely, for every $\gamma >0$,
\begin{equation}
\label{fc11}
\limsup_{N \to \infty} \mathbb{Q}^N_{\pi^N} \big[ H(x) \geq \gamma h_N^d
\big] \;<\; \;1.
\end{equation} 
The claim above also follows from Theorem~2.1 of \cite{david2} and
Lemma \ref{captorus}.

\begin{proof}[Proof of Lemma~\ref{aug5}] 
  Our strategy is to consider several consecutive attempts to hit one
  of the points in $A_M^N$. For $\gamma >0$ and a positive integer
  $L$, define the times
\begin{equation*}
a_i \;=\; i \, ( L N^2 + \gamma \, h_N^d ) \;, \quad 
b_i \;=\; i \, ( L N^2 + \gamma \, h_N^d ) \;+\;  L N^2
\end{equation*}
for $i \ge 0$. Intuitively, for each $i$, we use the intervals $[a_i,
b_i]$ to approach equilibrium measure $\pi^N$ and the intervals $[b_i,
a_{i + 1}]$ to attempt to hit the set $A_M^N$.

Let
\begin{equation*}
\begin{split}
& R_{L,N} \;:=\; \max_{y\in \bb T^d_N} \big\Vert \, 
\mathbb{Q}^N_y[Y^N_{LN^2} = \cdot] - \pi^N(\cdot) \,\big\Vert\; , \\
& \qquad S_{L,N} \;:=\;  \sup_{y \in B(A_M^N, l_N)^c}
\mathbb{Q}^N_y[H(A_M^N) \le LN^2]\;, \\
& \qquad\qquad T_{L,N} \;:=\; \mathbb{Q}^N_{\pi^N} [ H(A^N_M) \le LN^2]\;.
\end{split}
\end{equation*}
By symmetry, the maximum is irrelevant in the definition of $R_{L,N}$,
and, by \eqref{mixtime}, $R_{L,N}$ vanishes as $N\uparrow\infty$ and
then $L\uparrow\infty$. By \eqref{nohitN2}, \eqref{fc10}, $S_{L,N}$ an
$T_{L,N}$ vanish as $N\uparrow\infty$ for every $L\ge 1$.

For $0\le s < t$, define the random variable $J_{s,t}$, which takes
the value $0$ if the set $A_M^N = \{x_1^N, \dots, x_M^N\}$ is not
visited between the times $s$ and $t$ and otherwise $J_{s,t} =
1,\dots,M$ according to the index of the first point in $A_M^N$
visited in this interval.

The proof of the lemma is divided in two parts. We first claim that
for every $\gamma>0$,
\begin{equation}
\label{fc07} 
\limsup_{N\to\infty}\sup_{y \in B(A_M^N, l_N)^c} 
\Bigg| \mathbb{Q}^N_{y} \Big[H(x_1^N) < H(A_{M,1}^N) \Big] -\; 
\frac{\bb Q^N_{\pi^N} \big [J_{b_0,a_1} =1 \big]}
{\bb Q^N_{\pi^N} \big [J_{b_0,a_1} \not = 0 \big]} \Bigg| \;=\; 0\; . 
\end{equation}
Note that this expression does not depend on $L$, but only on $\gamma$
and $N$. Then, we prove that
\begin{equation}
\label{fc08}
\lim_{\gamma \to 0} \limsup_{N\to\infty} \Bigg| \frac{\bb Q^N_{\pi^N}
\big [J_{b_0,a_1} =1 \big]} {\bb Q^N_{\pi^N} \big [J_{b_0,a_1}
\not = 0 \big]} - \frac 1M \Bigg| = 0 \;.
\end{equation}
Clearly, Lemma \ref{aug5} follows from \eqref{fc07} and \eqref{fc08}.

The proof of \eqref{fc07} relies on three estimates. Consider the
event $\mf D_F = [J_{a_i,b_i} \neq 0 \text{ for some } i = 0, \dots, F
- 1]$, $F\ge 1$. This event indicates that some very deep trap is
visited in one of the ``mixing'' intervals $[a_j,b_j]$. We claim that
\begin{equation}
\label{fc01}
\sup_{y \in B(A_M^N, l_N)^c} \mathbb{Q}^N_y [\mf D_F] \;\le \; 
F \, T_{L,N} \;+\; S_{L,N} \;+\; R_{L,N} \;.
\end{equation}

Fix a site $y$ in $B(A_M^N, l_N)^c$ and decompose the event $\mf D_F$
according to whether the set $A^N_M$ has been attained before time
$LN^2$ or not to get that
\begin{equation*}
\mathbb{Q}^N_y [\mf D_F] \;\leqslant\; \mathbb{Q}^N_y[H(A_M^N) \le LN^2] \;+\;
\mathbb{Q}^N_y [\mf D_F\,,\, H(A_M^N) > LN^2]\;.
\end{equation*}
The first term is bounded by $S_{L,N}$. On the set $H(A_M^N) > LN^2$,
the event $\mf D_F$ is equal to $\cup_{1\le i\le F-1} \{J_{a_i,b_i}
\neq 0\}$. Hence, by the Markov property at time $b_0$, the second
term on the right hand side of the previous inequality is bounded by
\begin{equation*}
E_{\mathbb{Q}^N_y} \Big [ \mathbb{Q}^N_{Y^N_{b_0}} \Big[ 
\bigcup_{i=1}^{F-1} \{J_{a_i-b_0,b_i-b_0} \neq 0\} \Big]\,\Big] \;
\le\; R_{L,N} \;+\; \mathbb{Q}^N_{\pi^N} \Big[ \bigcup_{i=0}^{F-2} 
\{J_{a_i,b_i} \neq 0\} \Big] \;.
\end{equation*}
Since $\pi^N$ is the stationary state, the second term is smaller than or
equal to $(F-1)$ $\mathbb{Q}^N_{\pi^N} [ H(A^N_N) \le LN^2]$. In
conclusion, we proved that
\begin{equation*}
\mathbb{Q}^N_y [\mf D_F] \;\leqslant\; S_{L,N} \;+\; 
F \, \mathbb{Q}^N_{\pi^N} [ H(A^N_M) \le LN^2] \;+\; R_{L,N}\;,
\end{equation*}
which is exactly \eqref{fc01}.

Let $\mf E_F$, $F\ge 1$, be the event that no site in $A^N_M$ has been
visited in the ``hitting'' intervals $[b_i,a_{i+1}]$, $0\le i\le F-1$:
$\mf E_F = [J_{b_i,a_{i+1}} = 0 \text{ for all } i = 0, \dots, F -
1]$. We claim that
\begin{equation}
\label{fc05}
\sup_{y \in B(A_M^N, l_N)^c} \Big| \bb Q^N_y[\mf E_F] 
\;-\; \bb Q^N_{\pi^N} \big [J_{b_0,a_1} =0 \big]^F \Big| 
\;\le\; F R_{L,N}\;.
\end{equation}

Fix a site $y$ in $B(A_M^N, l_N)^c$. By the Markov property, 
\begin{equation*}
\bb Q^N_y[\mf E_F] \;=\;
\bb Q^N_y \Big[ \bigcap_{i=0}^{F-2} \{J_{b_i,a_{i+1}}
= 0\} \; \bb Q^N_{Y^N_{a_{F-1}}} \big[J_{b_0,a_1} =0 \big] \, \Big]\;.
\end{equation*}
Applying the Markov property at time $b_0 = LN^2$, this expression
can be written as
\begin{equation*}
\bb Q^N_y \Big[ \bigcap_{i=0}^{F-2} \{J_{b_i,a_{i+1}}
= 0\} \Big] \bb Q^N_{\pi^N} \big [J_{0,a_1-b_0} =0 \big]
\;+\; R_N
\end{equation*}
where the remainder $R_N$ is absolutely bounded by $R_{L,N}$. Since
$\pi^N$ is the stationary state, $\bb Q^N_{\pi^N} [J_{0,a_1-b_0} =0]=
\bb Q^N_{\pi^N} [J_{b_0,a_1} =0]$. We proceed by induction to derive
\eqref{fc05}.

Let $\mf H_F$, $F\ge 1$, be the event $\{H (x_1) < H(A^N_{M,1})\} \cap
\mf D_F^c \cap \mf E_F^c$. We claim that
\begin{equation}
\label{fc09}
\begin{split}
& \sup_{y \in B(A_M^N, l_N)^c} \bigg| \bb Q^N_y \big[ \mf H_F\big]  -
\frac{ \bb Q^N_{\pi^N} \big[ J_{b_0,a_1} =1 \big]}
{\bb Q^N_{\pi^N} \big[ J_{b_0,a_1} \not =0 \big]} \bigg| \\
&\qquad\qquad\qquad\qquad\qquad
\le \; F^2 R_{L,N} \;+\; \bb Q^N_{\pi^N} \big[ J_{b_0,a_1} =0 \big]^F 
\;+\; \bb Q_y^N [\mf D_F]\;.
\end{split}
\end{equation}

Clearly, 
\begin{equation*}
\mf H_F \;=\; \mf D_F^c \cap \bigcup_{j=0}^{F-1} \Big\{  \bigcap_{k=0}^{j-1} 
\{J_{b_{k}, a_{k+1}} =0\} \cap \{J_{b_{j}, a_{j+1}} =1\} \Big\}\;.
\end{equation*}
Repeating the arguments which leaded to \eqref{fc05}, we get that for
each $0\le j\le F-1$ and any site $y$ in $B(A_M^N, l_N)^c$,
\begin{equation*}
\begin{split}
& \bb Q^N_y \Big[ \bigcap_{k=0}^{j-1} \{J_{b_{k}, a_{k+1}} =0\} 
\cap \{J_{b_{j}, a_{j+1}} =1\} \Big] \\
& \qquad =\; \bb Q^N_{\pi^N} \big[ J_{b_0,a_1} =0 \big]^{j} 
\bb Q^N_{\pi^N} \big[ J_{b_0,a_1} =1 \big] \;+\; R_N \;,
\end{split}
\end{equation*}
where the remainder $R_N$ is absolutely bounded by $(j+1)
R_{L,N}$. The value of the remainder $R_N$ may change from line to
line below. Summing over $j$, we get that
\begin{equation*}
\bb Q^N_y \big[ \mf H_F\big] \;=\; \bb Q^N_{\pi^N} \big[ J_{b_0,a_1}
=1 \big] \, \sum_{j=0}^{F-1} \bb Q^N_{\pi^N} \big [J_{b_0,a_1} =0
\big]^j \; +\; R_N \;,
\end{equation*}
where the remainder $R_N$ is now absolutely bounded by $F^2 R_{L,N}
+\bb Q_y^N[\mf D_F]$. This expression can be written as
\begin{equation*}
\frac{ \bb Q^N_{\pi^N} \big[ J_{b_0,a_1} =1 \big]}
{1-\bb Q^N_{\pi^N} \big[ J_{b_0,a_1} =0 \big]} \; +\; R_N
\end{equation*}
for a remainder $R_N$ absolutely bounded by $ F^2 R_{L,N}+\bb
Q^N_{\pi^N} \big[J_{b_0,a_1} =0 \big]^F +\bb Q_y^N[\mf D_F]$, which
is precisely \eqref{fc09}.

By the estimates \eqref{fc01}, \eqref{fc05}, \eqref{fc09}, the
supremum in \eqref{fc07} is bounded by
\begin{equation*}
2F \, T_{L,N} \;+\; 2S_{L,N} \;+\; 4 F^2 R_{L,N} \;+\; 
2\, \bb Q^N_{\pi^N} \big[ J_{b_0,a_1} =0 \big]^F
\end{equation*}
for every $F\ge 1$, $L\ge 1$, $\gamma >0$. By \eqref{fc11}, $\bb
Q^N_{\pi^N} [ J_{b_0,a_1} =0 ]$, which does not depend on $L$, is
bounded above by a constant strictly smaller than one. Hence, as
$N\uparrow\infty$, and then $L\uparrow\infty$, $R_{L,N}$, $S_{L,N}$
and $T_{L,N}$ vanish. It remains to let $F\uparrow\infty$ to
conclude the proof of \eqref{fc07}.

It remains to prove \eqref{fc08}. Decompose the event $\{J_{b_0,a_1} =
1\}$ according to the event that at least two sites in $A^N_M$ have
been visited in the time interval $[b_0,a_1]$ to get that
\begin{equation*}
\big| \mathbb{Q}^N_{\pi^N}[J_{b_0,a_1} = 1] - \mathbb{Q}^N_{\pi^N} [
Y_{[b_0,a_1]} \cap A^N_M = \{x_1\}] \, \big| \;\le\; 
\mathbb{Q}^N_{\pi^N} \big[\#(Y_{[b_0,a_1]} \cap A^N_M) > 1 \big]\;.
\end{equation*}
In this formula, $Y_{[b_0,a_1]}$ stands for the sites visited by the
random walk in the interval $[b_0,a_1]$, $Y_{[b_0,a_1]} = \{Y_t : b_0
\le t\le a_1\}$, and $\# A$ for the cardinality of $A$. By similar
reasons, 
\begin{equation*}
\big| \mathbb{Q}^N_{\pi^N} [ Y_{[b_0,a_1]} \cap A^N_M = \{x_1\}] 
- \mathbb{Q}^N_{\pi^N} [x_1 \in Y_{[b_0,a_1]}] \, \big| \;\le\; 
\mathbb{Q}^N_{\pi^N} \big[\#(Y_{[b_0,a_1]} \cap A^N_M) > 1 \big]\;.
\end{equation*}
Therefore,
\begin{equation*}
\big| \mathbb{Q}^N_{\pi^N}[J_{b_0,a_1} = 1]
- \mathbb{Q}^N_{\pi^N} [x_1 \in Y_{[b_0,a_1]}] \, \big| \;\le\; 
2 \mathbb{Q}^N_{\pi^N} \big[\#(Y_{[b_0,a_1]} \cap A^N_M) > 1 \big]\;.
\end{equation*}
Note that the probability $\mathbb{Q}^N_{\pi^N} [y \in Y_{[b_0,a_1]}]$
does not depend on $y$ by symmetry. In particular, it also follows
from the previous arguments that
\begin{equation*}
\big| \mathbb{Q}^N_{\pi^N} [J_{b_0,a_1} \neq 0]
- M \mathbb{Q}^N_{\pi^N} [x_1 \in Y_{[b_0,a_1]}] \, \big| \;\le\; 
(M+1) \mathbb{Q}^N_{\pi^N} \big[\#(Y_{[b_0,a_1]} \cap A^N_M) > 1 \big]
\end{equation*}
so that
\begin{equation*}
\big| \mathbb{Q}^N_{\pi^N}[J_{b_0,a_1} = 1] - (1/M)
\mathbb{Q}^N_{\pi^N} [J_{b_0,a_1} \neq 0] \, \big| \;\le\; 
4 \mathbb{Q}^N_{\pi^N} \big[\#(Y_{[b_0,a_1]} \cap A^N_M) > 1 \big]\;.
\end{equation*}
Equivalently,
\begin{equation*}
\bigg| \frac{\bb Q^N_{\pi^N} \big [J_{b_0,a_1} =1 \big]} 
{\bb Q^N_{\pi^N} \big [J_{b_0,a_1} \not = 0 \big]} - \frac 1M \bigg| 
\; \leq\;  \frac{4\, \mathbb{Q}^N_{\pi^N}
\big[ \#(Y_{[b_0,a_1]} \cap A^N_M) > 1 \big]}
{\bb Q^N_{\pi^N} \big [J_{b_0,a_1} \not = 0 \big]}\;\cdot
\end{equation*}
By the strong Markov property, the right hand side is bounded
above by
\begin{equation*}
\max_{1\le i\le M} \mathbb{Q}^N_{x_i} [H(A^N_{M,i}) < \gamma h^d_N]\;.
\end{equation*}
By \eqref{nohit2}, this expression vanishes as $N\uparrow\infty$ and
then $\gamma\downarrow 0$. This concludes the proof of
the lemma.
\end{proof}

We also need to estimate in dimension $2$ the probability that the
random walk escapes from a deep trap. More precisely,

\begin{lemma}
\label{aug6} 
Assume that $d = 2$ and let $\{l_N : N\ge 1\}$ be a sequence
satisfying \eqref{fc02}. Then, for any sequence $\{y^N \in
\mathbb{T}^2_N : N\ge 1\}$, 
\begin{equation*}
\lim_{N\to\infty} \log N\, \mathbb{Q}^N_{y^N} 
\big [ H(B(y^N,l_N)^c) < \hat \tau(y^N) \big] \;=\; \frac{\pi}{2}\;\cdot
\end{equation*}
\end{lemma}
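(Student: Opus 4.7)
The plan is to reduce the problem, via a local coupling, to a classical estimate for the simple random walk on $\bb Z^2$, and then invoke the known logarithmic asymptotic for the Green's function of the killed planar walk.

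First, I would localize. Since $l_N/N \to 0$ by \eqref{fc02}, for all sufficiently large $N$ the ball $B(y^N, l_N) \subset \bb T^2_N$ is isometric as a graph to the ball $B(0, l_N) \subset \bb Z^2$ of the same radius (no wrap-around). Under this identification, the law of $\{Y^N_k\}$ started at $y^N$ and stopped upon exiting $B(y^N, l_N)$ coincides, after translation, with the law of the simple symmetric random walk $\{\bb Y_k\}$ on $\bb Z^2$ started at the origin and stopped upon exiting $B(0, l_N)$. Hence
\begin{equation*}
\bb Q^N_{y^N} \big[H(B(y^N, l_N)^c) < \hat\tau(y^N)\big]
\;=\; \bb P_0 \big[H(B(0,l_N)^c) < \tau_0^+\big]\;,
\end{equation*}
where $\tau_0^+$ is the first return time of $\bb Y$ to the origin.

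Next, I would express the right-hand side through a Green function. By the strong Markov property, the number of visits of $\bb Y$ to $0$ before exit from $B(0, l_N)$ is geometric with success probability $p_N := \bb P_0[H(B(0,l_N)^c) < \tau_0^+]$; its expectation therefore equals both $1/p_N$ and the killed Green function
\begin{equation*}
G_{l_N}(0,0) \;:=\; \bb E_0 \Big[ \sum_{k=0}^{H(B(0, l_N)^c) - 1} \mb 1\{\bb Y_k = 0\} \Big]\;,
\end{equation*}
so that $p_N = 1/G_{l_N}(0,0)$.

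Finally, the classical planar potential-kernel expansion $a(x) = (2/\pi)\log|x| + \kappa + o(1)$ as $|x|\to\infty$, together with the martingale identity $G_L(0,0) = \bb E_0[a(\bb Y_{H(B(0,L)^c)})]$ (obtained by optional stopping applied to the martingale $a(\bb Y_k) - \sum_{j=0}^{k-1}\mb 1\{\bb Y_j = 0\}$, using $\Delta a = \delta_0$), yields
\begin{equation*}
G_{l_N}(0,0) \;=\; \frac{2}{\pi}\log l_N \;+\; O(1) \qquad\text{as }N\to\infty\;,
\end{equation*}
a classical result that may be cited, e.g., from the chapter on the two-dimensional walk in \cite{lawler}. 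Combining this with the second assertion of \eqref{fc02}, i.e. $\log l_N = \log N + o(\log N)$, gives
\begin{equation*}
\log N \cdot p_N \;=\; \frac{\log N}{(2/\pi)\log l_N + O(1)} \;\longrightarrow\; \frac{\pi}{2}\;,
\end{equation*}
as claimed. The only delicate ingredient is the Green's function expansion with a bounded rather than merely $o(\log L)$ error term; this is the heart of two-dimensional potential theory and, once quoted, the rest of the argument is immediate.
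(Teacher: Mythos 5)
Your proof is correct and follows essentially the same route as the paper's: both identify the number of visits to $y^N$ before exiting $B(y^N,l_N)$ as a geometric random variable with parameter $p_N$, equate its mean $1/p_N$ with the killed Green's function $G_{B(y^N,l_N)}(y^N,y^N)$, and invoke the classical expansion $G_L(0,0)=(2/\pi)\log L+O(1)$ (the paper cites \cite{lawler}, Theorem 1.6.6, which is exactly the potential-kernel/optional-stopping argument you sketch), concluding via $\log l_N\sim\log N$ from \eqref{fc02}. The only addition on your side is making the localization (no wrap-around of $B(y^N,l_N)$ in the torus) explicit, which the paper leaves implicit.
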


\begin{proof} 
The number of visits that the random walk performs to $y^N$ before
exiting $B(y^N,l_N)$ is a geometric random variable, with failure
probability given by $\mathbb{Q}^N_{y^N}[H(B(y^N,l_N)^c) < \hat
\tau(y^N)]$. The inverse of this probability is equal to the expected
number of visits to $y^N$ before exiting $B(y^N,l_N)$. By
\cite{lawler}, Theorem~1.6.6, the expected number of visits, denoted
by $G_{B(y^N,l_N)}(y^N,y^N)$ in the notation of Green's functions, is
given by
\begin{equation*}
G_{B(y^N,l_N)}(y^N,y^N) \;=\; \frac{2}{\pi} \log N  \;+\; K \;+\;  
O(N^{-1})\;,
\end{equation*} 
for some constant $K$ in $\mathbb{R}$. The result follows from this
estimate.
\end{proof}

\begin{corollary}
\label{aug7} 
Assume that $d = 2$. Under the hypotheses of Lemma~\ref{aug5}, for $j
\neq 1$,
\begin{equation*}
\lim_{N\to\infty} \log N \, \mathbb{Q}^N_{x_j^N} \big[ H(x_1^N) <
\hat \tau(A^N_{M,1}) \big] \;=\;  \frac{\pi}{2M}\; \cdot 
\end{equation*}
\end{corollary}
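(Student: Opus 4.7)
The plan is to mimic the argument of Corollary~\ref{aug1} (the analogous statement in dimension $d\ge 3$), substituting Lemma~\ref{aug5} for Lemma~\ref{aug2} and Lemma~\ref{aug6} for the $v_d$-escape probability.

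First I would introduce two auxiliary radii of the form $\tilde l_N=\lfloor l_N/4\rfloor$ and $\ell_N=\lfloor l_N/8\rfloor$. Both inherit property \eqref{fc02} from $l_N$, and the $l_N$-separation of the traps (together with the equivalence of $d$ and $d_2$ up to a constant) yields, for every $N$ large, (a) the ball $B(x_j^N,\tilde l_N)$ meets $A^N_M$ only at $x_j^N$, and (b) every site on $\partial B(x_j^N,\tilde l_N)$ lies in $B(A^N_M,\ell_N)^c$. Setting $\sigma:=H(B(x_j^N,\tilde l_N)^c)$, on the event $\{\hat\tau(x_j^N)<\sigma\}$ the walk returns to $x_j^N\in A^N_{M,1}$ before exiting the ball, and since $x_1^N$ lies outside the ball by (a) this forces $\hat\tau(A^N_{M,1})<H(x_1^N)$; so the event $\{H(x_1^N)<\hat\tau(A^N_{M,1})\}$ is violated. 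On the complementary event $\{\sigma<\hat\tau(x_j^N)\}$, no trap has been visited before time $\sigma$, so the strong Markov property at $\sigma$ gives
\begin{equation*}
\mathbb{Q}^N_{x_j^N}\big[H(x_1^N)<\hat\tau(A^N_{M,1})\big]
\;=\; E^{\mathbb{Q}^N_{x_j^N}}\Big[\mathbf{1}\{\sigma<\hat\tau(x_j^N)\}\,
\mathbb{Q}^N_{Y_\sigma}\big[H(x_1^N)<H(A^N_{M,1})\big]\Big].
\end{equation*}

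By (b) the exit point satisfies $Y_\sigma\in B(A^N_M,\ell_N)^c$; since $\ell_N$ verifies \eqref{fc02} and the pairwise trap distances exceed $\ell_N$, Lemma~\ref{aug5} applied with $\ell_N$ yields
\begin{equation*}
\sup_{y\in B(A^N_M,\ell_N)^c}\Big|\mathbb{Q}^N_y\big[H(x_1^N)<H(A^N_{M,1})\big]-\tfrac1M\Big|\;\longrightarrow\;0,
\end{equation*}
so the inner probability in the previous display equals $1/M+o(1)$ uniformly in $Y_\sigma$. Inserting this bound gives
\begin{equation*}
\mathbb{Q}^N_{x_j^N}\big[H(x_1^N)<\hat\tau(A^N_{M,1})\big]
\;=\;\Big(\tfrac1M+o(1)\Big)\,\mathbb{Q}^N_{x_j^N}\big[\sigma<\hat\tau(x_j^N)\big].
\end{equation*}

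To conclude, Lemma~\ref{aug6} applied with $\tilde l_N$ in place of $l_N$ (legitimate since $\tilde l_N$ satisfies \eqref{fc02}) delivers $\log N\cdot\mathbb{Q}^N_{x_j^N}[\sigma<\hat\tau(x_j^N)]\to\pi/2$, and multiplying the two asymptotics produces the announced limit $\pi/(2M)$. The only technical point is the metric bookkeeping embodied in (a)--(b): it requires choosing $\tilde l_N$ and $\ell_N$ as sufficiently small constant fractions of $l_N$ so that both the inclusion $Y_\sigma\in B(A^N_M,\ell_N)^c$ and the escape asymptotics for $\sigma$ via Lemma~\ref{aug6} remain available; once this buffer is in place the proof is a direct transcription of the $d\ge 3$ case.
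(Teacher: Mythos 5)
Your proof is correct and follows the same route as the paper: the strong Markov property at the exit time of a ball around $x_j^N$, Lemma~\ref{aug6} for the $\pi/(2\log N)$ escape asymptotics, and Lemma~\ref{aug5} for the uniform $1/M$ hitting probability from the exit point (this is exactly the decomposition the authors write out in the proof of Proposition~\ref{s14}). Your explicit buffer radii $\tilde l_N,\ell_N$ just make precise the metric bookkeeping the paper leaves implicit.
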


\begin{proof}
This result follows from the strong Markov property and
Lemmas~\ref{aug5} and \ref{aug6}.
\end{proof}

\begin{corollary}
\label{aug8} 
Assume that $d = 2$ and let $\{R_N : N\ge 1\}$ be a sequence such that
$R_N \uparrow \infty$. Let $x^N, y^N \in \mathbb{T}^2_N$, such that
$d(x^N, y^N) \geq R_N$. Then,
\begin{equation}
\lim_{N\to\infty} \log N\, \Cap_{Y^N}(x^N,y^N) \;=\; \frac{\pi}{4}\;\cdot
\end{equation}
\end{corollary}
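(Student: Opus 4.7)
The plan is to follow the same strategy as the proof of Corollary \ref{aug3} in dimensions $d\ge 3$, replacing Corollary \ref{aug1} by its two-dimensional analogue Corollary \ref{aug7}, and combining it with Lemma \ref{escape}. First, I would rewrite the capacity as an escape probability. For the discrete-time nearest-neighbor symmetric random walk $\{Y^N_k:k\ge 0\}$ on $\bb T^2_N$, the Dirichlet form in the definition of $\Cap_{Y^N}$ is associated with the counting measure and unit jump rate. Applying Lemma \ref{escape} with $A=\{x^N\}$ and $y=y^N$ then gives
\begin{equation*}
\Cap_{Y^N}(x^N,y^N) \;=\; \bb Q^N_{y^N}\big[H(x^N) < \hat\tau(y^N)\big].
\end{equation*}

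Next, I would apply Corollary \ref{aug7} with $M=2$ to the set $A^N_2=\{x_1^N,x_2^N\}:=\{x^N,y^N\}$. The set $A^N_{2,1}$ then reduces to the singleton $\{y^N\}$, so $\hat\tau(A^N_{2,1})=\hat\tau(y^N)$, and the conclusion of Corollary \ref{aug7} for $j=2$ reads
\begin{equation*}
\lim_{N\to\infty}\log N\cdot \bb Q^N_{y^N}\big[H(x^N)<\hat\tau(y^N)\big] \;=\; \frac{\pi}{2M} \;=\; \frac{\pi}{4}.
\end{equation*}
Combining the two displays yields the claim.

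The main point requiring care will be compatibility between our hypothesis $d(x^N,y^N)\ge R_N$ with only $R_N\uparrow\infty$ and the hypothesis of Corollary \ref{aug7}, which (through Lemma \ref{aug5}) requires a separation $d_2(x_1^N,x_2^N)\ge l_N$ for a sequence $\{l_N\}$ obeying \eqref{fc02}. Since $d$ and $d_2$ are comparable on the torus, this is automatic whenever $R_N$ eventually dominates some admissible sequence $\{l_N\}$, which is the regime in which the corollary is invoked in the later sections. For a completely general $R_N\uparrow\infty$ I would inspect the proof of Lemma \ref{aug5} and verify that the two key inputs \eqref{nohitN2} and \eqref{nohit2} rely only on $l_N\to\infty$, so that Lemma \ref{aug5}, and hence Corollary \ref{aug7}, extend under this weaker separation hypothesis, which is all that is needed to close the argument.
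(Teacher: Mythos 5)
Your main argument is exactly the paper's: the proof given there is the one-line observation that the corollary is a direct application of Lemma \ref{escape} (which, since $\lambda\equiv 1$ and $\nu$ is the counting measure for $Y^N$, identifies $\Cap_{Y^N}(x^N,y^N)$ with the escape probability $\bb Q^N_{y^N}[H(x^N)<\hat\tau(y^N)]$) and of Corollary \ref{aug7} with $M=2$. So the core of your proposal is correct and identical in route to the paper's.

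Your last paragraph, however, contains a false claim. In dimension $2$ the inputs to Lemma \ref{aug5} do \emph{not} survive under a bare $l_N\uparrow\infty$: the estimate \eqref{nohitN2} fails for slowly growing $l_N$, since the probability that planar random walk started at distance $l$ from a point hits it within time of order $N^2$ behaves like $1-\log l/\log N$ up to constants, which tends to $0$ only when $\log l_N/\log N\to 1$, i.e.\ under \eqref{fc02}. The same near-linear separation is needed in Lemma \ref{aug6}, whose conclusion $G_{B(y,l_N)}(y,y)=\tfrac{2}{\pi}\log N+O(1)$ uses $\log l_N\sim\log N$ (the Green function is really $\tfrac{2}{\pi}\log l_N+O(1)$). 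Consistently, Lemma \ref{captorus} and the commute-time identity give $\Cap_{Y^N}(x^N,y^N)\asymp 1/\log d(x^N,y^N)$, so for, say, $R_N=\log N$ the quantity $\log N\,\Cap_{Y^N}(x^N,y^N)$ diverges and the stated limit $\pi/4$ is simply false. The corollary must therefore be read with the separation of \eqref{fc02} (as it is in every application, where the traps sit at fixed macroscopic positions so that $d(x^N,y^N)\asymp N$); your proposed repair for general $R_N$ does not work, though to be fair the paper's own statement and one-line proof gloss over the same point.
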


\begin{proof}
The corollary is a direct application of Lemma \ref{escape} and
Corollary~\ref{aug7}. 
\end{proof}

\subsection{Metastability of the trap model in dimension $d\ge 3$}

Recall that we denoted by $v_d$ the probability that a
nearest-neighbor, symmetric random walk on $\bb Z^d$ never returns to
its starting point. As in the previous subsections, we denote by
$Y^N_k$ the discrete time random walk on the torus $\bb T^d_N$,
inducing the law $\bb Q^N_x$ on $D(\bb Z_+, \bb T^d_N)$. The proof of
Theorem \ref{mt4} is divided in two parts. In Proposition \ref{s04b}
below we show that the trace process converges and in Corollary
\ref{s01b} that the time spent outside $A^N_M$ is negligible.

\begin{proposition}
\label{s04b}
Fix $M>1$ and $T>0$. As $N\uparrow\infty$, the process $\{ X^{N,M}_t :
0\le t\le T\}$ converges in distribution to the Markov process on
$\{1, \dots, M\}$ with generator $L_M$ given by
\begin{equation*}
(\mf L_M f) (i) \;=\; \frac {v_d}{M \hat w_i}
\sum_{j=1}^M [f(j) - f(i)]\; .
\end{equation*}
\end{proposition}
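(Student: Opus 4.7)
The plan is to show that the jump rates of $X^{N,M}_t$ converge to the off-diagonal entries of $\mf L_M$; since both processes live on the fixed finite state space $\{1,\dots,M\}$, this rate convergence will yield convergence in distribution in $D([0,T],\{1,\dots,M\})$ by a standard argument for continuous-time Markov chains on a finite set.

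By formula \eqref{f05b}, for $i\ne j$ the rate at which the trace process $\hat X^{N,M}_t$ jumps from $x^N_i$ to $x^N_j$ equals
\begin{equation*}
r^N(i,j) \;=\; \frac 1{W^N_{x^N_i}} \, \bb P^N_{x^N_i}\big[H(x^N_j) < \tau(A^N_M\setminus\{x^N_j\})\big]\;,
\end{equation*}
since the holding rate at $x^N_i$ for the process with generator $\mc L_N$ is $1/W^N_{x^N_i}$. As hitting probabilities for a continuous-time Markov chain coincide with those of its embedded discrete-time skeleton, this probability equals $\bb Q^N_{x^N_i}[H(x^N_j)<\hat\tau(A^N_M\setminus\{x^N_j\})]$. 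I would then apply Corollary~\ref{aug1} to conclude that this probability tends to $v_d/M$. Its hypothesis is easily met: the atoms $\hat x_1,\dots,\hat x_M$ are distinct in $\bb T^d$ and, by the remark preceding the definition of the trace process, $x^N_j/N$ lies within $(1/2N)\mb 1$ of $\hat x_j$ for $N$ large, so the graph distance $d(x^N_i,x^N_j)$ grows linearly in $N$; thus any $l_N\uparrow\infty$ with $l_N/N\to 0$ satisfies $d(x^N_i,x^N_j)\ge l_N$ for $N$ large and all $i\ne j$, and relabeling the indices in Corollary~\ref{aug1} yields the limit for each ordered pair.

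It remains to verify that $W^N_{x^N_i}\to \hat w_i$. For any $K\ge M$, for $N$ sufficiently large the points $\hat x_1,\dots,\hat x_K$ lie in $K$ distinct cubes of side $1/N$, so the cube indexed by $x^N_i$ meets $\{\hat x_k\}_{k\le K}$ only at $\hat x_i$ and hence $|W^N_{x^N_i}-\hat w_i|\le \sum_{k>K}\hat w_k$, which vanishes as $K\to\infty$. Combining these observations gives $r^N(i,j)\to v_d/(M\hat w_i)$, matching the off-diagonal rates of $\mf L_M$. The main technical input is Corollary~\ref{aug1}, itself resting on the intermediate mixing-scale argument of Lemma~\ref{aug2}; the remaining points (identification of the holding rate, convergence $W^N_{x^N_i}\to\hat w_i$, and the passage from rate convergence to process convergence on a finite state space) are routine and should not present any real obstacle.
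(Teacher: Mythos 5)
Your proposal is correct and follows essentially the same route as the paper's proof: express the jump rates of the trace process via \eqref{f05b}, pass to the discrete skeleton, invoke Corollary~\ref{aug1} for the limit $v_d/M$ of the hitting probability, and use $W^N_{x^N_i}\to\hat w_i$. You merely fill in details the paper leaves implicit (the separation hypothesis of Corollary~\ref{aug1} and the convergence of $W^N_{x^N_i}$), which is fine.
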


\begin{proof}
Fix $M\ge 1$ and denote by $r_{N,M} : \{1,\dots, M\} \times \{1,\dots,
M\} \to \bb R_+$ the jump rates of the trace process $\{X^{N,M}_t
: 0\le t\}$. By \eqref{f05b}, for $j\not = i$,
\begin{equation*}
r_{N,M}(i,j) \;=\; \frac 1{W^N_{x^N_i}}\,
\bb P^N_{x^N_i} \big [ H(x^N_j) < \tau (A^N_{M,j}) \big ] \; ,
\end{equation*}
where again $A^N_{M,j} = \{x^N_1, \dots , x^N_{j-1}, x^N_{j+1},
x^N_{M}\}$, $1\le j\le M \le N^d$. To compute this probability, we
need only to examine the discrete skeleton Markov chain:
\begin{equation*}
\bb P^N_{x^N_i} \Big [ H(x^N_j) < \tau (A^N_{M,j}) \Big] \;=\;
\bb Q^N_{x^N_i} \Big [ H(x^N_j) < \hat \tau (A^N_{M,j}) \Big] \;.
\end{equation*}
Since $x^N_j$ converges, as $N\uparrow\infty$, to $\hat x_j$, $1\le
j\le M$, and since $\min_{1\le i \not = j \le M} \Vert \hat x_i - \hat
x_j \Vert >0$, by Corollary \ref{aug1},
\begin{equation*}
\lim_{N\to\infty} \bb Q^N_{x^N_i} \Big [ H(x^N_j) < \hat \tau (A^N_{M,j})
\Big] \;=\; \frac {v_d} M\; \cdot
\end{equation*}
Hence, for $j\not = i$, $r_{N,M}(i,j)$ converges, as
$N\uparrow\infty$, to $(v_d/M \hat w_i)$, because $W^N_{x^N_i}$
converges to $\hat w_i$. This concludes the proof of the proposition.
\end{proof}

To examine the time spent by the random walk $\{X^N_t : 0\le t\le T\}$
on $\bb T^d_N \setminus A^N_M$, denote by $\Cap_N$ the capacity
associated to the process $X^N$. Of course, for any two disjoint
subsets $A$, $B$ of $\bb T^d_N$,
\begin{equation}
\label{f02b}
\Cap_{N} (A,B) \; =\; \frac{1}{W(\bb T^d)} \Cap_{Y^N} (A,B)\;.
\end{equation}

For $x \not =y$, in $\bb T^d_N$, denote by $\bb P^{N,x}_y$ the
probability measure on the path space $D(\bb R_+, \bb T^d_N \setminus
\{x\})$ induced by the trace of $\{X^N_t : t\ge 0\}$ on $\bb T^d_N
\setminus \{x\}$ starting from $y$. Expectation with respect to $\bb
P^{N,x}_y$ is denoted by $\bb E^{N,x}_y$.

\begin{lemma}
\label{s03b}
We have that
\begin{equation*}
\lim_{M\to\infty}  \, \limsup_{N\to\infty} \, \max_{1\le j\le M}
\, \max_{y : |y-x^N_j|=1} \, M\, \bb E^{N,x^N_j}_y \Big[ H (A^N_{M,j})
\Big] \;=\; 0\;.
\end{equation*}
\end{lemma}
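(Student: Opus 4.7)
The plan is to apply Lemma \ref{s02b} and then bound separately the capacity appearing in the denominator and the harmonic-measure-weighted sum appearing in the numerator; the key point is that the numerator is of order $\epsilon_M/M$, where $\epsilon_M := \sum_{i>M} \hat w_i / W(\bb T^d)$, whereas the denominator is bounded below by a positive constant, so that $M$ times the expectation is controlled by $\epsilon_M$, which tends to $0$ as $M \to \infty$. Applying Lemma \ref{s02b} with $F = \bb T^d_N \setminus \{x^N_j\}$ and $A = A^N_{M,j}$ one obtains
\begin{equation*}
\bb E^{N, x^N_j}_y \bigl[ H(A^N_{M,j}) \bigr]
\;=\; \frac{1}{\Cap_N(y, A^N_{M,j})} \sum_{z \in F} \nu^N(z)\, \bb P^N_z \bigl[ H(y) < H(A^N_{M,j}) \bigr]\;.
\end{equation*}
For the capacity, monotonicity $\Cap_N(y, A^N_{M,j}) \ge \Cap_N(y, \{x^N_i\})$ for any $i \ne j$, combined with \eqref{f02b} and Corollary \ref{aug3} applied to the pair $y, x^N_i$ (for which $d(y, x^N_i) \ge d(x^N_j, x^N_i) - 1 \to \infty$ because the limit positions $\hat x_i$ are distinct), gives $\liminf_{N\to\infty} \Cap_N(y, A^N_{M,j}) \ge v_d/(2 W(\bb T^d))$ uniformly in $j$ and in $y \sim x^N_j$. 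So for $N$ large the denominator exceeds a fixed constant $c > 0$.

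The main work lies in the numerator. Fix any sequence $l_N$ with $l_N \to \infty$ and $l_N/N \to 0$, for instance $l_N = \lfloor \sqrt N \rfloor$. For $N$ large, the $M$ points of $\{y\} \cup A^N_{M,j}$ are pairwise at distance at least $l_N$, so Lemma \ref{aug2} applied to this enlarged set, with $y$ playing the role of $x^N_1$, yields
\begin{equation*}
\sup_{z \in B(\{y\} \cup A^N_{M,j}, \, l_N)^c} \Bigl| \bb P^N_z \bigl[ H(y) < H(A^N_{M,j}) \bigr] - \tfrac 1M \Bigr|
\;\xrightarrow[N \to \infty]{}\; 0\;.
\end{equation*}
Writing $B := B(\{y\} \cup A^N_{M,j}, l_N)$ and using that $\bb P^N_z[H(y) < H(A^N_{M,j})] = 0$ for $z \in A^N_{M,j}$, split the sum according to whether $z$ lies in $B^c$ or in $B$; on $B^c$ bound $\bb P^N_z$ by $1/M + o_N(1)$, on $B$ bound it by $1$, to get
\begin{equation*}
\sum_{z \in F} \nu^N(z)\, \bb P^N_z[\,\cdot\,]
\;\le\; \bigl( 1/M + o_N(1) \bigr)\, \nu^N(\bb T^d_N \setminus A^N_M)
\;+\; \nu^N(B \setminus A^N_M)\;.
\end{equation*}
The first factor of the first term converges to $\epsilon_M$ as $N \to \infty$. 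The last term is bounded by $\nu^N(B(y, l_N) \setminus \{x^N_j\}) + \sum_{i \le M,\, i \ne j} \nu^N(B(x^N_i, l_N) \setminus \{x^N_i\})$, and since $W$ is purely atomic, the $W$-mass of each punctured neighborhood $B(\hat x_i, l_N/N) \setminus \{\hat x_i\}$ tends to $0$ as $l_N/N \to 0$; this makes the last term $o_N(1)$ uniformly in $j$ and $y$.

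Putting everything together, multiplying by $M$ and dividing by $c$,
\begin{equation*}
\limsup_{N \to \infty} \max_{1 \le j \le M}\, \max_{y \sim x^N_j}\, M\, \bb E^{N, x^N_j}_y [H(A^N_{M,j})]
\;\le\; \frac{\epsilon_M}{c}\;,
\end{equation*}
which tends to $0$ as $M \to \infty$ since $\sum_{i \ge 1} \hat w_i = W(\bb T^d) < \infty$. The main obstacle is that the bound $\bb P^N_z \le 1$ applied uniformly on $\bb T^d_N \setminus A^N_M$ would yield only $M\, \bb E \lesssim M \epsilon_M/c$, which need not vanish for slowly decaying weight sequences; the critical improvement that replaces $1$ by $1/M$ on the bulk of $z$ is precisely the content of Lemma \ref{aug2} applied to the enriched but still well-separated set $\{y\} \cup A^N_{M,j}$, and the atomicity of $W$ is essential both to ensure distinct limiting positions $\hat x_i$ and to make the $\nu^N$-mass of the $l_N$-neighborhoods of the excluded points negligible.
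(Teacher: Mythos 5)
Your proposal is correct and follows essentially the same route as the paper's proof: the representation from Lemma \ref{s02b}, the uniform lower bound on the capacity via monotonicity and Corollary \ref{aug3}, and a near/far decomposition of the numerator in which Lemma \ref{aug2} (applied to the well-separated set $\{y\}\cup A^N_{M,j}$) supplies the crucial factor $1/M$ on the bulk while the $\nu^N$-mass of the punctured $l_N$-neighborhoods is shown to vanish. The resulting bound $M\,\bb E^{N,x^N_j}_y[H(A^N_{M,j})]\lesssim \epsilon_M$ with $\epsilon_M=W(\bb T^d)^{-1}\sum_{i>M}\hat w_i$ is exactly the paper's conclusion.
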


\begin{proof}
Fix $1\le j\le M$ and $y\sim x^N_j$. By Lemma \ref{s02b}, the
expectation appearing in the statement of the lemma is equal to
\begin{equation}
\label{ft2}
\frac {1}{\Cap_N (y,A^N_{M,j})} \sum_{z\not = x^N_j} \nu^N (z)\,
\bb P^N_{z} \big[ H(y) < H(A^N_{M,j}) \big]\;.
\end{equation}

By \eqref{f02b}, the denominator is equal to
\begin{equation*}
\frac 1{W(\bb T^d)}\, \Cap_{Y^N}(y,A^N_{M,j}) \;\ge\;
\frac 1{W(\bb T^d)}\, \Cap_{Y^N}(y,x^N_1)\;.
\end{equation*}
In view of Corollary \ref{aug3}, this latter expression is bounded
below, uniformly in $N$, by a strictly positive constant.

To estimate the numerator in \eqref{ft2}, we need only to examine the
discrete skeleton Markov chain because $\bb P^N_{z} [ H(y) <
H(A^N_{M,j}) ] = \bb Q^N_{z} [ H(y) < H(A^N_{M,j}) ]$.  Fix a sequence
$\{ \ell_N : N\ge 1\}$ such that $1<\!\!< \ell_N <\!\!< N$ and let
$B_N = \{z\in \bb T^d_N : d(z,A^N_M) \le \ell_N\} \setminus A^N_M$,
$C_N = \{z\in \bb T^d_N : d(z,A^N_M) > \ell_N\}$.  Since $\bb Q^N_{z} [
H(y) < H(A^N_{M,j})]$ vanishes on the set $A^N_{M,j}$,
\begin{eqnarray*}
\sum_{z\not = x^N_j} \nu^N (z)\,
\bb Q^N_{z} \big[ H(y) < H(A^N_{M,j}) \big] &=&
\sum_{z\in B_N} \nu^N (z)\,
\bb Q^N_{z} \big[ H(y) < H(A^N_{M,j}) \big] \\
&+& \sum_{z\in C_N} \nu^N (z)\,
\bb Q^N_{z} \big[ H(y) < H(A^N_{M,j}) \big]\;.
\end{eqnarray*}
The first term on the right hand side is bounded by $\nu^N(B_N)$,
which vanishes as $N\uparrow\infty$ because $\ell_N <\!\!< N$. Since
$\ell_N >\!\!> 1$, by Lemma \ref{aug2}, as $N\uparrow\infty$, the
second term converges to $M^{-1} [1 - W(\bb T^d)^{-1} \sum_{1\le j\le
  M} \hat w_j]$. The expression inside brackets vanishes as
$M\uparrow\infty$ by definition of the sequence $\{\hat w_i\}$. This
proves the lemma.
\end{proof} 

Recall that $\Delta_{N,M} = \bb T^d_N\setminus A^N_M$.

\begin{corollary}
\label{s01b}
For every $t\ge 0$,
\begin{equation*}
\lim_{M\to\infty}  \, \limsup_{N\to\infty} \, \max_{1\le j\le M}
\, \bb E^{N}_{x^N_j} \big[ \mc T^{\Delta_{N,M}}_t \big] \;=\; 0\;.
\end{equation*}
\end{corollary}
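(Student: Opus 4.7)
The plan is to bound $\mc T^{\Delta_{N,M}}_N(t)$ by decomposing it as a sum of excursion times between successive visits of $X^N$ to distinct sites of $A^N_M$, to control each excursion with Lemma~\ref{s03b}, and then to bound the number of such excursions by $O(M)$ via a Wald-type argument that uses Proposition~\ref{s04b} and Corollary~\ref{aug1}.

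First I would set $\sigma_0 := 0$ and $\sigma_{k+1} := \inf\{s > \sigma_k : X^N_s \in A^N_M \setminus \{X^N_{\sigma_k}\}\}$, so the $\sigma_k$ are the real-time jump times of the trace process on $A^N_M$; with $i_k$ defined by $X^N_{\sigma_k} = x^N_{i_k}$, $K(t) := \max\{k : \sigma_k \le t\}$, and $D_k := \int_{\sigma_k}^{\sigma_{k+1}} \mb 1_{\Delta_{N,M}}(X^N_s)\,ds$, one has $\mc T^{\Delta_{N,M}}_N(t) \le \sum_{k=0}^{K(t)} D_k$. Viewed through the trace process on $\bb T^d_N \setminus \{x^N_{i_k}\}$, the strong Markov property at $\sigma_k$ (and at each subsequent return to $x^N_{i_k}$) identifies $\bb E[D_k \mid \mc F_{\sigma_k}]$ with $\bb E^{N, x^N_{i_k}}_Y[H(A^N_{M, i_k})]$, where $Y$ is a uniformly chosen neighbor of $x^N_{i_k}$. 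Hence $\bb E[D_k \mid \mc F_{\sigma_k}] \le \bar e_{N,M}$, where $\bar e_{N,M}$ is the maximum appearing in the statement of Lemma~\ref{s03b}, and that lemma gives $\limsup_N M\bar e_{N,M} \le \epsilon_M$ with $\epsilon_M \to 0$ as $M \to \infty$. Summing yields
\begin{equation*}
\bb E^N_{x^N_j}\bigl[\mc T^{\Delta_{N,M}}_N(t)\bigr] \;\le\; \bar e_{N,M}\,\bigl(\bb E^N_{x^N_j}[K(t)]+1\bigr).
\end{equation*}

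The heart of the argument is then the estimate $\limsup_N \bb E^N_{x^N_j}[K(t)] = O(M)$, uniformly in $j$. Let $\eta_k^{\mathrm{tr}} := (\sigma_{k+1}-\sigma_k) - D_k$ denote the trace-time holding at $i_k$, and let $\lambda^{N,M}(\cdot)$ denote the total jump rate of the trace process $X^{N,M}$. The deterministic inequality $\sum_{k<K(t)}\eta_k^{\mathrm{tr}} \le t$ (trace time is dominated by real time) combined with optional stopping applied to the martingale $\sum_{k<n}(\eta_k^{\mathrm{tr}} - 1/\lambda^{N,M}(x^N_{i_k}))$ gives
\begin{equation*}
\bb E\Big[\sum_{k<K(t)} \frac{1}{\lambda^{N,M}(x^N_{i_k})}\Big] \;\le\; t.
\end{equation*}
By Proposition~\ref{s04b}, $\lambda^{N,M}(x^N_i) \to (M-1)v_d/(M\hat w_i)$ as $N\to\infty$, and by Corollary~\ref{aug1} the conditional law of $i_k$ given $i_{k-1}$ converges to the uniform distribution on $\{1,\dots,M\}\setminus\{i_{k-1}\}$. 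Averaging, for $N$ sufficiently large (depending on $M$) this yields $\bb E\bigl[1/\lambda^{N,M}(x^N_{i_k}) \bigm| \mc F_{\sigma_{k-1}}\bigr] \ge c/M$ for every $k\ge 1$, with $c := (W(\bb T^d) - \hat w_1)/(2v_d) > 0$. A standard conditional Wald inequality then yields $\bb E^N_{x^N_j}[K(t)] \le tM/c + 1$, and combined with the earlier display this gives $\limsup_N \bb E^N_{x^N_j}\bigl[\mc T^{\Delta_{N,M}}_N(t)\bigr] \le \epsilon_M(t/c + 2/M)$, which vanishes uniformly in $j$ as $M\to\infty$.

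The main obstacle is obtaining the linear-in-$M$ bound on $\bb E[K(t)]$: the naive estimate from the maximum trace-jump rate only gives $O(t/\hat w_M)$, and multiplied by $\bar e_{N,M}=o(1/M)$ this need not vanish when $M\hat w_M \to 0$. The refinement must exploit that after a single trace-jump the process lands at a near-uniformly distributed site, for which the expected trace-time holding is of order $W(\bb T^d)/(Mv_d)$ rather than $\hat w_M/v_d$; this is precisely the mechanism that makes the conditional lower bound on $1/\lambda^{N,M}(x^N_{i_k})$ of order $1/M$ rather than $\hat w_M$, and it is what converts the trace-time budget $t$ into a linear-in-$M$ jump count.
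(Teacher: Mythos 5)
Your strategy is sound and rests on the same two pillars as the paper's proof: each excursion into $\Delta_{N,M}$ costs $o(1/M)$ in expectation (Lemma \ref{s03b}), and the number of trace jumps before time $t$ is $O(M)$ because the landing distribution is asymptotically uniform, so the mean trace holding time after a jump is of order $W(\bb T^d)/(Mv_d)$. Where you differ is in how the $O(M)$ jump count is established. The paper introduces the auxiliary process $\hat Z^{N,M}_t = X^N(\sup\{s\le t: X^N_s\in A^N_M\})$, couples it with the trace process, truncates at $KM$ jumps, and controls $\bb P[N_t(\hat X^{N,M})\ge KM]$ by passing to the limit process $Z^M$ (using Proposition \ref{s04b} and the fact that $\{N_t\ge KM\}$ is closed in the Skorohod topology) and then comparing $N_t(Z^M)$ with a compound Poisson process built from the returns to the deepest trap; this requires an extra limit $K\uparrow\infty$. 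You instead bound $\bb E[K(t)]$ directly at the prelimit level by a Wald argument, using the quantitative convergence of the jump rates (Proposition \ref{s04b}) and of the landing probabilities (Corollary \ref{aug1}). Your route avoids the auxiliary process, the coupling, and the weak-convergence/closed-set step, at the price of needing the prelimit rate asymptotics uniformly over the landing site -- which are indeed available. Both are legitimate; yours is arguably more self-contained, the paper's reuses machinery from \cite{bl1}.

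One step needs repair: $K(t)$ is not a stopping time for the discrete filtration $(\mc F_{\sigma_k})_k$ (the event $\{K(t)=k\}$ depends on $\sigma_{k+1}$), so optional stopping cannot be applied at $K(t)$, and the intermediate inequality $\bb E[\sum_{k<K(t)}1/\lambda^{N,M}(x^N_{i_k})]\le\bb E[\sum_{k<K(t)}\eta^{\mathrm{tr}}_k]$ is false in general: conditioning on $\{\sigma_{k+1}\le t\}$ biases $\eta^{\mathrm{tr}}_k$ downward (already in the one-site renewal example one gets equality with $t$ on the left and strict inequality on the right). The standard fix is to stop at $K(t)+1$, which \emph{is} a stopping time, yielding $\bb E[\sum_{k\le K(t)}1/\lambda^{N,M}(x^N_{i_k})]=\bb E[\sum_{k\le K(t)}\eta^{\mathrm{tr}}_k]\le t+\bb E[\text{residual holding time after }t]\le t+C$, where $C$ is bounded uniformly in $M$ and in large $N$ since $1/\lambda^{N,M}(x^N_i)\le 2M\hat w_1/((M-1)v_d)$. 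This only changes your final bound to $\epsilon_M((t+C)/c+2/M)$, which still vanishes as $M\to\infty$, so the conclusion survives.
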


\begin{proof}
Fix $M\ge 1$ and $1\le j\le M$. Consider the stochastic process $\hat
Z^{N,M}_t$ with state space $A^N_M$ defined as
\begin{equation*}
\hat Z^{N,M}_t\;=\; X^N (\sigma(t)) \;,
\end{equation*}
where $\sigma(t):=\sup\{s\le t : X^N_s\in A^N_M\}$. Hence, during an
excursion in $\Delta_{N,M}$ by $X^N$, the process $\hat Z^{N,M}_t$
stays at the last visited site in $A^N_M$.

For a path $\omega\in D(\bb R_+, \bb T^d_N)$ performing infinitely
many jumps, denote by $\tau_n(\omega)$, $n\ge 0$, the jumping times of
$\omega$: $\tau_0(\omega)=0$ and
\begin{equation*}
\tau_n(\omega) \;:=\; \inf\{t>\tau_{n-1}(\omega) :
\omega(t)\neq \omega(\tau_{n-1}(\omega)) \}\;.
\end{equation*}
Let
\begin{equation*}
T_n(\omega) \; := \; \tau_n(\omega)-\tau_{n-1}(\omega)\;,\quad n\ge 1\;,
\end{equation*}
and let $N_t$ be the number of jumps up to time $t$:
\begin{equation*}
N_t(\omega) \; := \; \sup\{j\ge 0 : \tau_j(\omega)\le t\}\;.
\end{equation*}

The process $\hat X_t^{N,M}$, defined on the path space $D(\bb R_+,
A^N_M)$, can be thought as a process on $D(\bb R_+, \bb T^d_N)$.
Couple the processes $\hat X_t^{N,M}$, $\hat Z_t^{N,M}$ forcing them to
visit the same sequence of sites. By Lemma \ref{s03b} and the proof of
Lemma 4.4 in \cite{bl1}, for every $K\ge 1$,
\begin{equation}
\label{f07b}
\lim_{M\to\infty} \limsup_{N\to\infty} \max_{1\le j\le M} \bb E^N_{x^N_j}
[\, \tau_{KM} (\hat Z^{N,M}) - \tau_{KM}(\hat X^{N,M})\,] \;=\; 0\;.
\end{equation}

Set $\hat N_t:=N_t(\hat Z^{N,M})$, $\hat T_n:=T_n(\hat Z^{N,M})$ and
$T_n:=T_n(\hat X^{N,M})$. Fix $1\le j\le M$. Under $\bb P^N_{x^N_j}$,
\begin{eqnarray*}
\mc T^{\Delta_{N,M}}_t \;\le\; t \land \sum_{n=1}^{\hat N_t +1}
(\hat T_n - T_n)
\;\le\; \mathbf 1\{\hat N_t \ge K M\} \; t \;+\;
\sum_{n=1}^{KM}(\hat T_n - T_n)
\end{eqnarray*}
for any positive integer $K$. Therefore,
\begin{equation*}
\bb E^N_{x^N_j}[\mc T^{\Delta_{N,M}}_t] \; \le \; t \;
\bb P^N_{x^N_j}[\, \hat N_t  \ge KM \,] \;+\;
\bb E^N_{x^N_j} \big[\,\tau_{KM}(\hat Z^{N,M}) - \tau_{KM}(\hat X^{N,M})\,
\big]\;.
\end{equation*}

By \eqref{f07b}, the second term vanishes as $N\uparrow\infty$ and then
$M\uparrow\infty$. It remains to prove that
\begin{equation}
\label{f08b}
\lim_{K\to\infty} \limsup_{M\to\infty} \limsup_{N\to\infty}
\max_{1\le j\le M} \bb P^N_{x^N_j}[\,N_t(\hat Z^{N,M}) \ge KM \,]
\;=\;0\;.
\end{equation}
Since $N_t(\hat Z^{N,M})\le N_t(\hat X^{N,M})$, $\bb P^N_{x^N_j}$ - a.s.,
\begin{equation*}
\bb P^N_{x^N_j}[\,N_t(\hat Z^{N,M}) \ge KM \,] \;\le\;
\bb P^N_{x^N_j}[\,N_t(\hat X^{N,M}) \ge KM \,]\;.
\end{equation*}
Fix $M\ge 1$ and $1\le j\le M$ such that
\begin{eqnarray*}
\limsup_{N\to\infty} \max_{1\le k\le M}
\bb P^N_{x^N_k}[\,N_t(\hat X^{N,M})\ge KM \,]
\;=\; \limsup_{N\to\infty} \bb P^N_{x^N_j}[\,N_t(\hat X^{N,M})\ge KM
\,] \; .
\end{eqnarray*}
Since $[N_t\ge KM]$ is a closed set for the Skorohod topology on
$D(\bb R_+, A^N_M)$, since $N_t (\hat X^{N,M})$ has the same
distribution as $N_t (X^{N,M})$ and since, by Proposition \ref{s04b},
$X^{N,M}$ converges in distribution to $Z^M$,
\begin{equation*}
\limsup_{N\to \infty}\bb P^N_{x^N_j} [N_t (\hat X^{N,M}) \ge KM] \;\le\;
\max_{1\le k\le M} P_{k} [N_t(Z^M) \ge KM]\;,
\end{equation*}
where $P_k$ is the distribution of the process $Z^M$ starting from $k$.

To estimate the right hand side, we compare $N_t(Z^M)$ with a counting
process $C_t$ in which we replace the holding times $T_n$ by $0$ if
$Z^M(\tau_{n-1}) \not = 1$. In other words, let $G_0:=C_0$ be the
number of times the process $Z^M$ jumped before hitting $1$ for the
first time. Since $Z^M$ jumps from any site uniformly to all others,
$G_0$ is a random variables with geometric distribution: $P[G_0 = n] =
(1/M) [(M-1)/M]^{n-1}$, $n\ge 1$. When hitting $1$, as $Z^M$, the
process $C_t$ stays there for a mean $\hat w_1/v_d$ exponential time.
At the end of this exponential time, $C_t$ jumps from $G_0$ to
$G_0+G_1$, where $G_1$ stands for the number of jumps performed by
$Z^M$ before hitting $1$ again.

By construction, $N_t(Z^M) \le C_t$ for all $t\ge 0$ and $C_t =
\sum_{0\le j \le \hat N_t} G_j$, where $\{G_j : j\ge 0\}$ are i.i.d.\!
random variables with geometric distribution: $P[G_1 = n] = (1/M)
[(M-1)/M]^{n-1}$, $n\ge 1$; and $\hat N_t$ is a Poisson process with
rate $v_d/\hat w_1$, independent of the sequence $\{G_i\}$. In
particular, $E_{P_k} [N_t(Z^M)] \le M [1+ (t \hat w_1/v_d)]$. This
proves \eqref{f08b} and the corollary.
\end{proof}

\begin{proof}[Proof of Theorem \ref{mt4}]
Theorem \ref{mt4} follows from Proposition \ref{s04b} and Corollary
\ref{s01b}.
\end{proof}

\begin{proof}[Proof of Theorem \ref{mt2}]
Denote by $\{Z_t : t\ge 0\}$ the $K$-process with parameters $\{\hat
w_i/v_d : i\ge 1\}$ and $c=0$. Using independent exponential and
Poisson random variables, we may define in the same probability space
$(\Omega, \mc F, P)$ processes $\{\bb X^{N,M}_t : t\ge 0\}$, $\{\bb
Z^{M}_t : t\ge 0\}$ and $\{\bb Z_t : t\ge 0\}$ which have the same
distribution as $\{X^{N,M}_t : t\ge 0\}$, $\{Z^{M}_t : t\ge 0\}$ and
$\{Z_t : t\ge 0\}$, respectively. Fix a common starting point $j$ for
all processes and $T>0$.  By \cite[Lemma 3.11]{fm1}, $\{\bb Z^{M}_t :
0\le t\le T\}$ converges a.s., as $M\uparrow\infty$, to $\{\bb Z_t :
t\ge 0\}$ in the Skorohod metric. On the other hand, by Proposition
\ref{s04b}, if $d_S$ stands for the Skorohod metric on $D([0,T],
\overline{\bb N})$, for every $M\ge 1$ and $\epsilon >0$,
\begin{equation*}
\lim_{N\to\infty}
P\big[ d_S(\bb X^{N,M}, \bb Z^M) > \epsilon\big] \;=\; 0\; .
\end{equation*}
In particular, there exists a strictly increasing sequence $\{N^*_M :
M\ge 1\}$, such that
\begin{equation*}
P\big[ d_S(\bb X^{N,M}, \bb Z^M) > M^{-1} \big] \;\le\; \frac 1M
\end{equation*}
for all $N\ge N^*_M$. Hence, by the triangular inequality, for any
sequence $\{N_M : M\ge 1\}$ such that $N_M\ge N^*_M$,
\begin{equation*}
\lim_{M\to\infty}
P\big[ d_S(\bb X^{N_M,M}, \bb Z) > \epsilon\big] \;=\; 0\; .
\end{equation*}
for any $\epsilon >0$. The sequence $\{\ell^*_N: N\ge 1\}$, defined as
the inverse of $\{N^*_M : M\ge 1\}$, fulfills the requirements of the
first part of Theorem \ref{mt2}.

To prove the second statement of Theorem \ref{mt2}, fix $M\ge 2$ and
observe that $\mc T^{\Delta_{N,\ell_N}}_t \le H(A^N_M) + \mc
T^{\Delta_{N,M}}_t \circ \theta (H(A^N_M))$ provided $\ell_N\ge M$.
In this formula, $\theta(s) : D(\bb R_+, \bb T^d_N) \to D(\bb R_+, \bb
T^d_N)$, $s\ge 0$, stands for the time shift by $s$ of a path
$\omega$: $(\theta(s) \omega)(t) = \omega(s+t)$, $t\ge 0$. Therefore,
\begin{equation*}
\max_{1\le j\le \ell_N} \bb E^N_{x^N_j}
\big[ \mc T^{\Delta_{N,\ell_N}}_t \big] \;\le\;
\max_{1\le j\le \ell_N} \bb E^N_{x^N_j} \big[ H(A^N_M) \big]
\;+\; \max_{1\le j\le M} \bb E^N_{x^N_j}
\big[ \mc T^{\Delta_{N,M}}_t \big]
\end{equation*}
provided $M\le \ell_N$. By Corollary \ref{s01b}, the second expression
converges to $0$ as $N\uparrow\infty$, $M\uparrow\infty$. By
definition of $H(A^N_M)$, the first expectation on the right hand side
vanishes for $1\le j\le M$. For $M<j\le \ell_N$, by \eqref{f06} with
$F=E=\bb T^d_N$,
\begin{equation*}
\bb E^N_{x^N_j} \big[ H(A^N_M) \big] \;\le\;
\frac{\nu^N(\Delta_{N,M})}{\Cap (x^N_j, A^N_M)}\;\cdot
\end{equation*}
The denominator is bounded below by $\min_{z\in \bb T^d_N} \max_{1\le
  k\le M} \Cap(z,x^N_k)$. Since $\Vert \hat x_1 - \hat x_2\Vert >0$,
by Corollary \ref{aug3}, this expression is bounded below by a
positive constant, uniformly in $M$ and $N$. This proves the second
statement of Theorem \ref{mt2} because $\nu^N(\Delta_{N,M})$ vanishes
as $N\uparrow\infty$, $M\uparrow\infty$.
\end{proof}

\subsection{Metastability for $d = 2$.} 

In this subsection, we adapt to dimension $2$ the results presented in
the previous subsection. Most of the proofs are similar to the case $d
\geqslant 3$.

The main difference with respect to dimension $3$ is that the process
is speeded up by $\log N$. Recall from Section \ref{sec1} that we
denote by $\{\mf X^N_t : t\ge 0\}$ the random walk with generator $\mc
L_N$, defined in \eqref{fc03}, speeded up by $\log N$. Moreover, $\mb
P^N_x$, $\bb P^N_x$, $x\in \bb T^2_N$, stand for the probability
measure on $\bb D(\bb R_+, \bb T^2_N)$ induced by the processes $\{\mf
X^N_t : t\ge 0\}$, $\{X^N_t : t\ge 0\}$ starting from $x$.

\begin{proposition} 
\label{s14}
For a fixed $M > 1$, and $T > 0$, the process $\{\mf X_{t}^{N,M}
\,:\, 0 \leqslant t \leqslant T \}$ converges in distribution, as
$N\uparrow\infty$, to the Markov process in $\{1,\dots,M\}$ given by
the following generator
\begin{equation*}
(\mf L^\star_Mf)(i)= \frac{\pi}{2} \frac{1}{M \hat w_{i}} 
\sum_{j = 1}^M [f(j) - f(i)]\; . 
\end{equation*}
\end{proposition}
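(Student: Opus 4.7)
The strategy closely parallels the proof of Proposition \ref{s04b}, substituting the two-dimensional estimates of Subsection 5.3 in place of those of Subsection 5.2 and keeping careful track of the $\log N$ time scaling.

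First, I would compute the jump rates $r^\star_{N,M}(i,j)$ of the trace process $\{\mf X^{N,M}_t : t\ge 0\}$. Since $\{\mf X^N_t\}$ has generator $(\log N)\mc L_N$, the holding rate at a site $x$ is $\lambda(x) = \log N / W^N_x$. From \eqref{f05b} we obtain, for $i\neq j$,
\begin{equation*}
r^\star_{N,M}(i,j) \;=\; \frac{\log N}{W^N_{x^N_i}} \, \mb P^N_{x^N_i}\big[H(x^N_j) < \tau(A^N_{M,j})\big]\; .
\end{equation*}
Because $\mc L_N$ and $(\log N)\mc L_N$ share the same embedded discrete skeleton $\{Y^N_k\}$, and because the event $\{H(x^N_j) < \tau(A^N_{M,j})\}$ only depends on the sequence of sites visited,
\begin{equation*}
\mb P^N_{x^N_i}\big[H(x^N_j) < \tau(A^N_{M,j})\big] \;=\; \bb Q^N_{x^N_i}\big[H(x^N_j) < \hat\tau(A^N_{M,j})\big]\; .
\end{equation*}

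Second, I would check that Corollary \ref{aug7} applies. Since $x^N_j / N \to \hat x_j$ and the points $\hat x_1,\dots,\hat x_M$ are pairwise distinct, we have $d_2(x^N_i, x^N_j) \geq c_M N$ for some $c_M>0$ and all $N$ large enough. Choosing $l_N = N/\log N$, both conditions in \eqref{fc02} hold, and the hypothesis $d_2(x^N_i, x^N_j) \geq l_N$ of Lemma \ref{aug5} and Corollary \ref{aug7} is satisfied for $N$ sufficiently large. Therefore
\begin{equation*}
\lim_{N\to\infty} \log N \, \bb Q^N_{x^N_i}\big[H(x^N_j) < \hat\tau(A^N_{M,j})\big] \;=\; \frac{\pi}{2M}\; .
\end{equation*}
Combining this with $W^N_{x^N_i} \to \hat w_i$ (which follows from the atomic structure of $W$ and the fact that for $N$ large the cube of side $1/N$ around $x^N_i/N$ contains precisely the atom $\hat x_i$), we conclude
\begin{equation*}
\lim_{N\to\infty} r^\star_{N,M}(i,j) \;=\; \frac{\pi}{2M\hat w_i}\; , \qquad i\neq j\; .
\end{equation*}

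Third, since the state space $\{1,\dots,M\}$ is finite and the initial state is fixed, the convergence of all off-diagonal jump rates to the entries of $\mf L^\star_M$ implies the convergence in distribution of $\{\mf X^{N,M}_t : 0\le t\le T\}$ to the Markov chain with generator $\mf L^\star_M$ in the Skorohod topology on $D([0,T], \{1,\dots,M\})$.

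The only delicate point is the separation estimate needed to invoke Corollary \ref{aug7}; once this is established the proof is essentially a direct transcription of the argument for Proposition \ref{s04b}, with the logarithmic time-scale factor cancelling the $\log N$ appearing in the two-dimensional capacity asymptotics of Corollary \ref{aug8}.
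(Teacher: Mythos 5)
Your proposal is correct and follows essentially the same route as the paper: both compute the trace rates via \eqref{f05b}, reduce to the discrete skeleton, and identify the limit $\pi/(2M\hat w_i)$ from the two-dimensional escape/hitting asymptotics. The only difference is cosmetic — you invoke Corollary \ref{aug7} as a black box, while the paper re-derives its content inline by splitting off the escape from $B(x^N_i,l_N)$ via the strong Markov property and applying Lemmas \ref{aug5} and \ref{aug6} separately; your verification of the separation hypothesis ($d_2(x^N_i,x^N_j)\ge c_M N \ge l_N$ with $l_N=N/\log N$ satisfying \eqref{fc02}) is exactly what is needed.
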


\begin{proof}
As in the proof of Proposition~\ref{s04b}, we use (\ref{f05b}) to
write the jump rates of $\mf X_{t}^{N,M}$ in terms of the excursion
probabilities between the very deep traps:
\begin{equation*}
r_{N,M}(i,j) \;=\; \frac{\log N}{W^N_{x^N_i}}
\mathbb{Q}^N_{x_i^N} \big[ H(x^N_j) < \hat \tau(A^N_{M,j}) \big] \;. 
\end{equation*}
Note the factor $\log N$ which appears because the generator $\mc L_M$
is multiplied by this constant.

Consider a sequence $\{ l_N : N\ge 1\}$ satisfying \eqref{fc02}.
Use the strong Markov property on $H(B(x^N_i,l_N)^c)$ to obtain
\begin{equation*}
\begin{split}
& \mathbb{Q}^N_{x_i^N} \big[ H(x^N_j) < \hat \tau(A^N_{M,j}) \big] \\
&\quad \;=\; \mathbb{Q}^N_{x_i^N} \Big[ \mb 1\big\{ 
H(B(x^N_i,l_N)^c) < \hat \tau(x^N_i) \big\} \,
\mathbb{Q}^N_{Y(H(B(x^N_i,l_N)^c))} \big[ H(x^N_j) < H(A^N_{M,j}) \big]
\, \Big] \;. 
\end{split}
\end{equation*}
Therefore,
\begin{equation*}
\begin{split}
& \Big| \log N\, \mathbb{Q}^N_{x_i^N} \big[ H(x^N_j) < 
\hat \tau(A^N_{M,j}) \big] \,-\, \frac{\pi}{2M} \Big|  \\
&\qquad\qquad\quad \;\le\;
\Big| \log N\, \mathbb{Q}^N_{x_i^N} \Big[ H(B(x^N_i,l_N)^c) < \hat
\tau(x^N_i) \big] \;-\; \frac \pi 2 \Big| \\
& \qquad\qquad\quad \;+\;
\frac \pi 2 \, \sup_{z \in \partial B(x^N_i,l_N)} \Big| \mathbb{Q}^N_{z} 
\big[ H(x^N_j) < H(A^N_{M,j}) \big]  - \frac 1M \Big| \;. 
\end{split}
\end{equation*}
By Lemmas~\ref{aug5} and \ref{aug6}, these expressions vanish as
$N\uparrow\infty$. Since $W^N_{x^N_i}$ converges towards $\hat w_i$,
$1\le i\le M$, we are done.
\end{proof}

Recall the definition of the measures $\bb P^{N,x}_y$, $x\not = y \in
\bb T^2_N$, introduced in the previous subsection. It corresponds to
the trace on $\bb T^2_N \setminus \{x\}$ of the process $\{X^N_t :
t\ge 0\}$, which has not been speeded up.

\begin{lemma}
\label{s12}
In dimension $2$, 
\begin{equation*}
\lim_{M\to\infty}  \, \limsup_{N\to\infty} \, \max_{1\le j\le M}
\, \max_{y : |y-x^N_j|=1} \, \frac{M}{\log N} \, \bb 
E^{N,x^N_j}_y \Big[ H (A^N_{M,j}) \Big] \;=\; 0\;.
\end{equation*}
\end{lemma}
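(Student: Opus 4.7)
I plan to adapt the proof of Lemma \ref{s03b}, substituting each dimension-three ingredient with its two-dimensional counterpart: Corollary \ref{aug3} is replaced by Corollary \ref{aug8}, and Lemma \ref{aug2} by Lemma \ref{aug5}. The extra factor $1/\log N$ present in the statement, absent in Lemma \ref{s03b}, is precisely accounted for by the logarithmic decay of capacities in dimension two.

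First, I would apply Lemma \ref{s02b} with $F=\bb T^2_N\setminus\{x^N_j\}$ to the process $\{X^N_t\}$ to write
\begin{equation*}
\bb E^{N,x^N_j}_y\big[H(A^N_{M,j})\big] \;=\; \frac{1}{\Cap_N(y,A^N_{M,j})}\sum_{z\neq x^N_j} \nu^N(z)\,\bb P^N_z\big[H(y)<H(A^N_{M,j})\big],
\end{equation*}
noting that $\bb P^N_z$ may be replaced by $\bb Q^N_z$ since hitting probabilities depend only on the discrete skeleton. For the denominator, I would exploit $M\ge 2$ by fixing some $k\neq j$ and using $\Cap_N(y,A^N_{M,j}) \ge \Cap_N(y,x^N_k) = W(\bb T^2)^{-1}\,\Cap_{Y^N}(y,x^N_k)$, by \eqref{f02b}. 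Since $y\sim x^N_j$ but $\hat x_j\neq \hat x_k$, one has $d(y,x^N_k)\gtrsim N$, so Corollary \ref{aug8} furnishes a constant $c>0$ with $\Cap_N(y,A^N_{M,j})\ge c/\log N$ for $N$ large, uniformly in $j\le M$ and $y\sim x^N_j$.

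For the numerator, I would fix a sequence $\{\ell_N\}$ satisfying \eqref{fc02} and split the sum according to whether $z$ lies in $B_N := \{z\in\bb T^2_N: d(z,A^N_M)\le \ell_N\}\setminus A^N_M$ or in $C_N := \{z\in\bb T^2_N: d(z,A^N_M)>\ell_N\}$ (points in $A^N_M\setminus\{x^N_j\}$ contribute zero since $H(A^N_{M,j})=0$ there). The contribution of $B_N$ is at most $\nu^N(B_N)$, and since $W$ is a countable sum of atoms with summable weights and $\ell_N/N\to 0$, dominated convergence yields $\lim_N \nu^N(B_N)=0$ for each fixed $M$. For $z\in C_N$, the set $A^N_{M,j}\cup\{y\}$ consists of $M$ points at pairwise distance at least $\ell_N-1$ for $N$ large, so Lemma \ref{aug5} gives $\bb Q^N_z[H(y)<H(A^N_{M,j})] = 1/M + o_N(1)$ uniformly in $z\in C_N$.

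Combining the denominator and numerator estimates, and using $\nu^N(C_N)\to 1 - W(\bb T^2)^{-1}\sum_{k\le M}\hat w_k$,
\begin{equation*}
\limsup_{N\to\infty}\frac{M}{\log N}\,\bb E^{N,x^N_j}_y\big[H(A^N_{M,j})\big] \;\le\; \frac{1}{c}\Big(1 - \frac{1}{W(\bb T^2)}\sum_{1\le k\le M}\hat w_k\Big),
\end{equation*}
which vanishes as $M\to\infty$ by construction of the enumeration $\{\hat w_k\}$. The step I anticipate being most delicate is ensuring the capacity lower bound is uniform over $1\le j\le M$ and $y\sim x^N_j$: this requires combining the positive separation $\min_{i\neq k}d_2(\hat x_i,\hat x_k)>0$ with the convergence $x^N_k\to \hat x_k$, and invoking the uniformity of the limit in Corollary \ref{aug8} over pairs at distance $\gtrsim N$.
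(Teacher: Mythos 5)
Your proof is correct and follows exactly the route the paper takes: the paper's own proof of Lemma \ref{s12} simply says to repeat the argument of Lemma \ref{s03b}, replacing Corollary \ref{aug3} by Corollary \ref{aug8} for the capacity lower bound (which produces the extra $1/\log N$) and using Lemma \ref{aug5} with a sequence $\{\ell_N\}$ satisfying \eqref{fc02} for the numerator, which is precisely what you do. Your treatment is in fact slightly more careful than the paper's on two minor points (choosing $x^N_k$ with $k\neq j$ rather than always $x^N_1$, and noting the uniformity of the capacity bound over the finitely many pairs $(j,y)$), but these are refinements, not deviations.
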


\begin{proof}
The proof of this result follows the same argument as in
Lemma~\ref{s03b}. One only notes that the denominator of (\ref{ft2})
is now multiplied by $\log N$ which allows us to use
Corollary~\ref{aug8} in place of Corollary~\ref{aug3}. The argument to
bound the numerator is also the same. However, one should choose a
sequence $\{\ell_N : N\ge 1\}$ satisfying \eqref{fc02} in order to
apply Lemma~\ref{aug5}.
\end{proof}

For $x \not =y$, in $\bb T^2_N$, denote by $\mb P^{N,x}_y$ the
probability measure on the path space $D(\bb R_+, \bb T^d_N \setminus
\{x\})$ induced by the trace of $\{\mf X^N_t : t\ge 0\}$ on $\bb T^d_N
\setminus \{x\}$ starting from $y$. Expectation with respect to $\mb
P^{N,x}_y$ is denoted by $\mb E^{N,x}_y$. 
The difference between $\mb P^{N,x}_y$ and $\bb P^{N,x}_y$ is that the
first probability measure is associated to the random walk speeded up
by $\log N$. Therefore, for every subset $A$ of $\bb T^2_N \setminus
\{x^N_j\}$,
\begin{equation*}
\mb E^{N,x^N_j}_y \big[ H (A) \big] \;=\; \frac 1{\log N} 
\, \bb E^{N,x^N_j}_y \big[ H (A) \big]\; .
\end{equation*}
In particular, it follows from the previous lemma that
\begin{equation}
\label{fc04}
\lim_{M\to\infty}  \, \limsup_{N\to\infty} \, \max_{1\le j\le M}
\, \max_{y : |y-x^N_j|=1} \, M \, \mb
E^{N,x^N_j}_y \big[ H (A^N_{M,j}) \big] \;=\; 0\;.
\end{equation}

\begin{corollary}
\label{s13}
In dimension $2$, for every $t\ge 0$,
\begin{equation*}
\lim_{M\to\infty}  \, \limsup_{N\to\infty} \, \max_{1\le j\le M}
\, \mb E^{N}_{x^N_j} \big[ \mc T^{\Delta_{N,M}}_t \big] \;=\; 0\;.
\end{equation*}
\end{corollary}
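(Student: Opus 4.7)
The plan is to repeat verbatim the argument used for Corollary \ref{s01b}, substituting each input about the $d\ge 3$ walk by its $d=2$ analog that has already been established, so that the $\log N$ speed-up cancels with the $\log N$ that appeared in the hitting-time estimate (\ref{fc04}). The key observation is that (\ref{fc04}) and Proposition \ref{s14} play exactly the same roles in dimension $2$ as Lemma \ref{s03b} and Proposition \ref{s04b} did in dimension $\ge 3$, the only change being that the constant $v_d$ is replaced by $\pi/2$ throughout.

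More concretely, I would first define the companion process $\hat Z^{N,M}_t = \mf X^N(\sigma(t))$, where $\sigma(t) = \sup\{s\le t : \mf X^N_s \in A^N_M\}$, so that $\hat Z^{N,M}$ stays frozen at the last deep trap visited while $\mf X^N$ wanders inside $\Delta_{N,M}$. Couple the trace process $\hat{\mf X}^{N,M}$ and $\hat Z^{N,M}$ so that they visit the same sequence of sites of $A^N_M$. Exactly as in Corollary \ref{s01b}, introducing the jump times $\tau_n$ and the jump counter $N_t$, one gets the bound
\begin{equation*}
\mb E^N_{x^N_j}\big[\mc T^{\Delta_{N,M}}_t\big] \;\le\; t\, \mb P^N_{x^N_j}\big[N_t(\hat Z^{N,M})\ge KM\big] \;+\; \mb E^N_{x^N_j}\big[\tau_{KM}(\hat Z^{N,M}) - \tau_{KM}(\hat{\mf X}^{N,M})\big]
\end{equation*}
for every $K\ge 1$ and every $1\le j\le M$.

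The second term vanishes in the iterated limit $\limsup_{N\to\infty}$, then $M\to\infty$, by the analog of Lemma 4.4 of \cite{bl1}, whose input is precisely the estimate (\ref{fc04}) proved above: after $KM$ excursions the total expected extra time is bounded by $KM$ times $o(1/M)$, which tends to $0$. For the first term, I would use $N_t(\hat Z^{N,M})\le N_t(\hat{\mf X}^{N,M})$ together with Proposition \ref{s14}, which says that $\hat{\mf X}^{N,M}$ converges in distribution (as $N\to\infty$) to the Markov chain with generator $\mf L^\star_M$. Since $\{N_t\ge KM\}$ is a closed set in the Skorohod topology, the same geometric-dominating argument from Corollary \ref{s01b} applies, with the expected holding time at the state where the clock actually runs being $2\hat w_1/\pi$ rather than $\hat w_1/v_d$. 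This yields $E[N_t] \le M(1 + 2t\hat w_1/\pi)$ for the limiting process, and Markov's inequality then gives
\begin{equation*}
\lim_{K\to\infty}\limsup_{M\to\infty}\limsup_{N\to\infty}\max_{1\le j\le M} \mb P^N_{x^N_j}\big[N_t(\hat{\mf X}^{N,M})\ge KM\big] \;=\; 0\;.
\end{equation*}

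I do not anticipate any real obstacle beyond bookkeeping: the substantive work has already been done in Lemma \ref{s12} (which absorbs the factor $\log N$ coming from the $d=2$ capacity estimate of Corollary \ref{aug8}) and in Proposition \ref{s14}. The only point requiring a little care is to check that (\ref{fc04}) is stated in the correct time scale --- that is, relative to the speeded-up process $\mf X^N$ --- so that when it is fed into the coupling bound above it gives $o(1/M)$ uniformly in $N$; this is precisely the content of the passage between Lemma \ref{s12} and equation (\ref{fc04}) in the paper, so the transcription of the $d\ge 3$ proof goes through without modification.
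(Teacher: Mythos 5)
Your proposal is correct and follows exactly the route of the paper, which proves Corollary \ref{s13} by transcribing the argument of Corollary \ref{s01b} with \eqref{fc04} in place of Lemma \ref{s03b} and Proposition \ref{s14} in place of Proposition \ref{s04b}, the holding rate of the dominating counting process becoming $\pi/(2\hat w_1)$ (whose exact value is immaterial). The one point you flag as needing care --- that \eqref{fc04} is already stated in the time scale of the speeded-up process $\mf X^N$ --- is indeed exactly the content of the passage from Lemma \ref{s12} to \eqref{fc04}, so nothing further is required.
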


\begin{proof}
  The argument is identical to the one in $d \geqslant 3$ presented in
  Corollary~\ref{s01b}. We just use \eqref{fc04} and Proposition
  \ref{s14} instead of Lemma \ref{s03b} and Proposition \ref{s04b}.
  At the end of the proof, the rate of the process $N_t(Z^M)$ is
  replaced by $\pi/(2\hat w_1)$, but its exact value is superfluous.
\end{proof}

\begin{proof}[Proof of Theorem \ref{mt5}]
  The proof is a direct consequence of Proposition~\ref{s14} and
  Corollary~\ref{s13}.
\end{proof}

\subsection{Dimension $2$ with no acceleration}

We prove in this subsection that in dimension $2$ the trap model with
generator \eqref{fc03} starting from a very deep trap does not
move. Hence, on the order $1$ scale, the random walk does not move
and on the scale $\log N$ it converges to the $K$-process in which
all the geometry is wiped out.

\begin{proposition}
\label{st1}
For every $j\ge 1$, every $t>0$ and every sequence $\{\ell_N : N\ge
1\}$ such that $\ell_N\uparrow\infty$,
\begin{equation*}
\lim_{N\to\infty} \bb P^N_{x^N_j} \big[ \, \big| X^N(t) - x^N_j
\big| \ge \ell_N \big]  \;=\; 0\;.
\end{equation*}
\end{proposition}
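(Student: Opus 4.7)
The plan is to exploit the recurrence of the two-dimensional simple random walk together with the positive holding time of the chain at the very deep trap $x^N_j$. The key observation is that before the process $X^N$ can leave the ball $B(x^N_j,\ell_N)$, the embedded discrete-time chain must make a \emph{geometrically large} number of returns to $x^N_j$, and each return contributes an independent holding time whose mean is bounded below by a positive constant.

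Concretely, I would argue as follows. Let $\sigma_N$ denote the exit time of $X^N$ from $B(x^N_j,\ell_N)$, so it suffices to show $\mb P^N_{x^N_j}[\sigma_N\le t]\to 0$. Let $G_N$ be the number of visits of the embedded discrete-time simple random walk $Y^N$ on $\bb T^2_N$ to $x^N_j$ strictly before $H(B(x^N_j,\ell_N)^c)$. By the strong Markov property, $G_N$ is geometrically distributed with success parameter
\begin{equation*}
p_N \;=\; \bb Q^N_{x^N_j}\big[\,H(B(x^N_j,\ell_N)^c)<\hat\tau(x^N_j)\,\big]\;.
\end{equation*}
Arguing as in Lemma \ref{aug6} (via the explicit Green function expansion $G_{B(x^N_j,\ell_N)}(x^N_j,x^N_j)=(2/\pi)\log\ell_N+O(1)$ from Theorem 1.6.6 of \cite{lawler}, after passing, if necessary, from the torus to $\bb Z^2$ using the invariance principle when $\ell_N\ll N$, and observing the event is trivial otherwise), one sees that $p_N=O(1/\log\ell_N)$ and therefore $G_N\to\infty$ in probability as $N\to\infty$.

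Next, couple $X^N$ with its embedded chain in the standard way: during its $k$-th visit to $x^N_j$ the process waits an independent exponential time $\xi_k$ of mean $W^N_{x^N_j}$. Since $W^N_{x^N_j}\to\hat w_j>0$, the law of large numbers gives
\begin{equation*}
\frac{1}{G_N}\sum_{k=1}^{G_N}\xi_k\;\longrightarrow\;\hat w_j\quad\text{in probability,}
\end{equation*}
so the total time spent at $x^N_j$ before exiting the ball, $S_{G_N}:=\sum_{k=1}^{G_N}\xi_k$, tends to infinity in probability. But trivially $\sigma_N\ge S_{G_N}$ (holding times at other sites only lengthen the exit time), so $\sigma_N\to\infty$ in probability, giving $\mb P^N_{x^N_j}[\sigma_N\le t]\to 0$ as required.

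The only substantive step is the escape probability estimate $p_N\to 0$; everything else is bookkeeping with geometric and exponential random variables. I would expect the main (minor) obstacle to be handling the dependence of the ball on $N$ and the fact that $\ell_N$ may grow arbitrarily slowly: one must ensure the Green function asymptotics for $\bb Z^2$ can be transferred to the torus ball (easy when $\ell_N\le N^{1/2}$, say, by a standard coupling with a random walk on $\bb Z^2$ up to the exit of $B(0,N/2)$), and this is why the two-dimensional case produces a logarithmic bound—exactly the source of the factor $\log N$ that has to be put back in to see nontrivial dynamics, as in Theorem \ref{mt5}.
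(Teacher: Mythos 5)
Your proof is correct, but it follows a genuinely different route from the paper's. The paper works with the clock process $S_N(k)=\sum_{i<k}\mf e_i\,W^N(Y^N(i))$ and its inverse $T_N$, writes $X^N(t)=Y^N(T_N(t))$, and splits on an intermediate number of steps $1<\!\!<r_N<\!\!<\ell_N^2$: the event $\{T_N(t)<r_N,\ |Y^N(T_N(t))-x^N_j|\ge\ell_N\}$ is controlled by Doob's inequality for the martingale $Y^N(k)-x^N_j$ (probability at most $4r_N/\ell_N^2$), while $\{T_N(t)\ge r_N\}=\{S_N(r_N)\le t\}$ is controlled by noting that, by recurrence, the walk visits $x^N_j$ many times among its first $r_N$ steps and each visit contributes an exponential of mean at least $\hat w_j$. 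You instead reduce to the exit time from $B(x^N_j,\ell_N)$ and quantify recurrence through the Green function: the number of returns to $x^N_j$ before exiting is geometric with parameter of order $1/\log\ell_N$, so the time accumulated at $x^N_j$ alone diverges. Your version avoids the martingale estimate and the intermediate scale $r_N$, and it yields more, namely that the exit time is of order $\hat w_j\log\ell_N$ --- which is exactly the quantitative fact behind the $\log N$ acceleration in Theorem \ref{mt5}; the price is that you need the escape-probability asymptotics for balls of arbitrarily slowly growing radius, whereas the paper only invokes qualitative recurrence. Two small points to tidy up: (i) your remark that ``the event is trivial otherwise'' is not accurate for $\sqrt N<\ell_N\le N$; the correct (and immediate) reduction is that $\{|X^N(t)-x^N_j|\ge\ell_N\}\subseteq\{|X^N(t)-x^N_j|\ge\min(\ell_N,\sqrt N)\}$ and $\min(\ell_N,\sqrt N)\uparrow\infty$, after which the ball embeds isometrically into $\bb Z^2$ and no invariance principle is needed --- the killed walks coincide in law; (ii) the law of large numbers is more than you need: since $G_N$ is independent of the holding times and $W^N_{x^N_j}\ge\hat w_j>0$ for large $N$, the bound $P[S_{G_N}\le t]\le P[G_N\le m]+P[\sum_{k\le m}\xi_k\le t]$ with $m$ fixed large already suffices.
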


\begin{proof}
Fix $j\ge 1$ and a sequence $\{\ell_N : N\ge 1\}$ such that
$\ell_N\uparrow\infty$. Following \cite{bc1}, denote by $S_N : \bb
Z_+\to \bb R$ the clock process: $S_N(0)=0$,
\begin{equation*}
S_N(k) \;=\; \sum_{i=0}^{k-1} \mf e_i \, W^N(Y^N(i))\; , \quad k\ge 1\;,
\end{equation*}
where $\{Y^N(i) : i\ge 0\}$ is a nearest-neighbor, symmetric, discrete
time random walk on $\bb T^2_N$ starting from $x^N_j$; $\{\mf e_i :
i\ge 0\}$ is a sequence of i.i.d.\! mean one, exponential random
variables, independent from the Markov chain $\{Y^N(i)\}$; and $W^N(x)
= W^N_x$, $x\in \bb T^2_N$. Denote by $T_N:\bb R_+ \to \bb R_+$ the
inverse of $S_N$:
\begin{equation*}
T_N(t) \;=\; \sup \big\{ k : S_N(k) \le t\big\}\;.
\end{equation*}

Clearly $\{X^N(t) : t\ge 0\}$ has the same distribution as
$\{Y^N(T_N(t)) : t\ge 0\}$. Hence,
\begin{equation}
\label{ft01}
\begin{split}
& \bb P^N_{x^N_j} \big[ \, \big| X^N(t) - x^N_j \big| \ge \ell_N \big]
\;=\; P^N \big[ \, \big| Y^N(T_N(t)) - x^N_j \big| \ge \ell_N \big] \\
& \qquad \le\; P^N \big[ \, \max _{0\le k\le r_N} \big| Y^N(k) - x^N_j \big|
\ge \ell_N \big] \;+\; P^N \big[ \, T_N(t) \ge r_N \big]\;,
\end{split}
\end{equation}
for a sequence $r_N$ such that $1<\!\!< r_N <\!\!< \ell_N^2$.

We estimate separately the expressions on the right hand side of
\eqref{ft01}. Since $T_N$ is the inverse of $S_N$, $\{T_N(t) \ge r_N
\} = \{S_N(r_N) \le t\}$. In particular,
\begin{equation*}
P^N \big[ \, T_N(t) \ge r_N \big] \;\le\;
P^N \big[ \hat w_j \sum_{i=0}^{r_N-1} \mf e_i  \, \mb 1\{Y^N(i)
= x^N_j\} \le t \big]\;,
\end{equation*}
because $S_N(k) \ge W^N_{x^N_j} \sum_{0\le i\le k-1} \mf e_i \, \mb
1\{Y^N(i) = x^N_j\}$, $W^N_{x^N_j} \ge \hat w_j$. Since $\{Y^N(k) :
k\ge 0\}$ starts from $x^N_j$ and the two-dimensional random walk is
recurrent, the previous probability vanishes as $N\uparrow\infty$
because $r_N\uparrow\infty$.

On the other hand, since $Y^N(k) - x^N_j$ is a bi-dimensional
martingale, by Doob's inequality,
\begin{equation*}
P^N \big[ \, \max _{0\le k\le r_N} \big| Y^N(k) - x^N_j \big|
\ge \ell_N \big] \;\le\; \frac{4 r_N}{\ell_N^2}\; ,
\end{equation*}
which vanishes as $N\uparrow\infty$. This concludes the proof of the
proposition.
\end{proof}

\medskip
\noindent{\bf Acknowledgments.} We thank David Windisch and G. Ben
Arous for fruitful discussions and Stefano Olla for indicating
reference \cite{pv1}.

\end{document}